\newcommand{\inlineitem}[1][]{%
\ifnum\enit@type=\tw@
    {\descriptionlabel{#1}}
  \hspace{\labelsep}%
\else
  \ifnum\enit@type=\z@
       \refstepcounter{\@listctr}\fi
    \quad\@itemlabel\hspace{\labelsep}%
\fi} \makeatother
\newcommand{\ga}{\alpha}
\newcommand{\gb}{\beta}
\newcommand{\gga}{\gamma}
\newcommand{\gd}{\delta}
\newcommand{\gl}{\lambda}
\newcommand{\gs}{\sigma}
\newcommand{\gf}{\phi}
\newcommand{\gc}{\psi}
\newcommand{\gom}{\omega}
\newcommand{\Gf}{\Phi}
\newcommand{\subs}{\subset}
\newcommand{\bs}{\backslash}
\newcommand{\nin}{\notin}
\newcommand{\ti}{\tilde}
\newcommand{\mbb}{\mathbb}
\newcommand{\mcl}{\mathcal}
\newcommand{\us}{\underset}
\newcommand{\os}{\overset}
\newcommand{\Lra}{\Leftrightarrow}
\newcommand{\lra}{\longrightarrow}
\newcommand{\Ra}{\Rightarrow}
\newcommand{\es}{\emptyset}
\newcommand{\equ}[1]{%
\begin{equation*}
#1
\end{equation*}
}
\newcommand{\equa}[1]{%
\begin{equation*}
\begin{aligned}
#1
\end{aligned}
\end{equation*}
}
\newcommand{\equan}[2]{%
\begin{equation}
\label{#1}
\begin{aligned}
#2
\end{aligned}
\end{equation}
}
\DeclareMathOperator{\Det}{Det}
\newcommand{\mattwo}[4]{%
\begin{pmatrix}
  #1 & #2\\ #3 & #4
\end{pmatrix}
}
\newcommand{\matthreefour}[9]{%
  \def\argi{{#1}}%
  \def\argii{{#2}}%
  \def\argiii{{#3}}%
  \def\argiv{{#4}}%
  \def\argv{{#5}}%
  \def\argvi{{#6}}%
  \def\argvii{{#7}}%
  \def\argviii{{#8}}%
  \def\argix{{#9}}%
  \matthreefourRelay
}
\newcommand\matthreefourRelay[3]{%
\begin{pmatrix}
  \argi     & \argii & \argiii & \argiv\\
  \argv     & \argvi & \argvii & \argviii\\
  \argix    & #1     & #2      & #3
\end{pmatrix}
}
\newcommand{\matthree}[9]{%
\begin{pmatrix}
  #1 & #2 & #3\\ #4 & #5 & #6\\ #7 & #8 & #9
\end{pmatrix}
}
\newcommand{\matfour}[9]{%
  \def\argi{{#1}}%
  \def\argii{{#2}}%
  \def\argiii{{#3}}%
  \def\argiv{{#4}}%
  \def\argv{{#5}}%
  \def\argvi{{#6}}%
  \def\argvii{{#7}}%
  \def\argviii{{#8}}%
  \def\argix{{#9}}%
  \matfourRelay
}
\newcommand\matfourRelay[7]{%
\begin{pmatrix}
  \argi & \argii & \argiii & \argiv\\
  \argv & \argvi & \argvii & \argviii\\
  \argix & #1 & #2 & #3\\
   #4 & #5 & #6 & #7
\end{pmatrix}
}
\newcommand{\uufour}[6]{
\begin{pmatrix}
1 & #1 & #2 & #3\\
0 & 1  & #4 & #5\\
0 & 0  & 1  & #6\\
0 & 0  & 0  & 1
\end{pmatrix}
}
\newcommand{\uuFour}[1]{
\begin{pmatrix}
1 & #1_{12} & #1_{13} & #1_{14}\\
0 & 1  & #1_{23} & #1_{24}\\
0 & 0  & 1  & #1_{34}\\
0 & 0  & 0  & 1
\end{pmatrix}
}
\newcommand{\uun}[1]{
\begin{pmatrix}
1       & #1_{12} & #1_{13}   &     \cdots      &     #1_{1,(n-1)}   &   #1_{1n}\\
0       & 1       & #1_{23}   &     \cdots      &     #1_{2,(n-1)}   &   #1_{2n}\\
0       & 0       & 1         &     \cdots      &     #1_{3,(n-1)}   &   #1_{3n}\\
\vdots  & \vdots  & \vdots    &     \ddots      &     \vdots        &   \vdots\\
0       & 0       & 0         &     \cdots      &     1             &   #1_{(n-1),n}\\
0       & 0       & 0         &     \cdots      &     0             &   1\\
\end{pmatrix}
}
\def\namedlabel#1#2{\begingroup
   \def\@currentlabel{#2}%
   \label{#1}\endgroup
}
\theoremstyle{plain}
\newtheorem{defn}[equation]{Definition}
\newtheorem{theorem}[equation]{Theorem}
\newtheorem{lemma}[equation]{Lemma}
\newtheorem{ques}[equation]{Question}
\newtheorem{claim}[equation]{Claim}
\newtheorem{obs}[equation]{Observation}
\newtheorem{remark}[equation]{Remark}
\newtheorem*{thmA}{Theorem A}
\newtheorem*{thmB}{Theorem B}
\begin{document}

\title[Maximal Non-commuting Sets]{Maximal Non-commuting Sets in Certain Unipotent Upper-Triangular Linear
Groups}
\author[C.P. Anil Kumar and S.K. Prajapati]{C.P. Anil Kumar and S.K.Prajapati}
\address{Stat Math Unit, Indian Statistical Institute,
  8th Mile Mysore Road, Bangalore-560059, India}
\email{akcp1728@gmail.com}
\address{Einstein Institute of Mathematics, Hebrew University of Jerusalem,
 Jerusalem 91904, Israel}
\email{skprajapati.iitd@gmail.com}

\subjclass[2010]{primary 20D60, secondary 14G15}
\keywords{pairwise non-commuting elements, unipotent upper-triangular linear groups}
\begin{abstract}
We find the exact size of a maximal non-commuting set in unipotent upper triangular linear group
$UU_4(\mbb{F}_q)$ in terms of a non-commuting geometric structure
(Refer Definition~\ref{defn:Config}),
where $\mbb{F}_q$ is the finite field with $q$ elements. Then we get bounds on the size of such a set by
explicitly finding certain non-commuting sets in the non-commuting strucuture.
\end{abstract}
\maketitle

\section{Introduction}
We start this section with a few definitions.

\begin{defn}
\label{defn:NonCommuting}
For any group $G$, we define a subset $N \subs G$ to be a non-commuting set if for any $x \neq y \in N,
xy \neq yx$.
\end{defn}
\begin{defn}
\label{defn:MaxNonComSet}
Let $G$ be a group. Let $S\subs G$ be any set. A set is said to be a maximal non-commuting subset of $S$
if it is not a proper subset of a bigger non-commuting subset of $S$
and also has maximum cardinality among all non-extendable non-commuting
subsets of $S$. The cardinality of such a set is denoted by $\gom(S)$. This is
also known as the clique number of the associated non-commuting subgraph of $S$ of the associated
non-commuting graph of $G$.
\end{defn}

The clique numbers for various families of groups have been studied by several authors such as
R.Brown, A.Abdollahi, C.E.Praeger, A.Azad, H.Liu, Y.L.Wang, A.Y.M. Chin, J.Pakianathan and E.Yalcin, etc.
There has been work on the clique number of the non-commuting graph of the
symmetric group, see the two papers by R.Brown~\cite{MR964389,MR1092853}. On the other hand, 
there has been work on the commuting graph of finite groups by C.W. Parker, G.L. Morgan and G. Michael
(See \cite{MiParker, MoParker, Parker}).

Maximal non-commuting sets in finite groups arise in many contexts in the
literature. Among earlier authors who have worked on $\gom(G)$ are B.H.Neumann~\cite{BHN}, answering a question of
P.Erd\H{o}s, D.R.Mason~\cite{MAS}, giving a bound on $\gom(G)$ by covering
the group $G$ by $\big(\frac{\mid G \mid}{2}+1\big)$ abelian groups
and L.~Pyber~\cite{PYB}, relating $\gom(G)$ to the index of the center
$Z(G)$ in $G$ as $[G:Z(G)]\leq c^{\gom(G)}$ for some constant c.
\subsection{Non-commuting sets in groups}  In \cite{MR2520526}, it has been proved that
\equ{\gom(GL_2(\mbb{F}_q)) = q^2 + q + 1.}
The question of maximal non-commuting sets in
$GL_3(\mbb{F}_q)$ have been studied by the authors A.Azad and C.E.Praeger using the concept of Singer generators and
pseudo Singer generator elements and the exact value of
$\gom(GL_3(\mbb{F}_q))$ has been found.

For higher dimension, the following theorem has been proved by the authors A.Azad, M.A.Iranmanesh and C.E.Praeger
in~\cite{CEPAZADIRANSPIGA}.\\
\begin{theorem}~\cite{CEPAZADIRANSPIGA}. Let $G=GL_n(\mbb{F}_q)$. Then for $q\geq
2$, \equ{q^{-n}(1-q^{-3}-q^{-5}+q^{-6}-q^{-n})
<\frac{\gom(GL_n(\mbb{F}_q))}{|GL_n(\mbb{F}_q)|}\leq q^{-n}l(q),}
where $l(q)=\us{k\geq 1}{\prod} (1-q^k)^{-\binom{k+1}{2}-1}$.
\end{theorem}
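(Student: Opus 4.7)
The plan is to establish the two bounds independently, since the upper and lower estimates require quite different techniques.

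For the upper bound, I would exploit the basic observation that any non-commuting set $N \sbq G$ meets each abelian subgroup of $G$ in at most one element. Hence if $\mcl{F}$ is any family of abelian subgroups whose union covers $G$, then $\gom(G) \leq |\mcl{F}|$. A natural choice for $\mcl{F}$ consists of the conjugates of the maximal abelian subgroups of $GL_n(\mbb{F}_q)$, the dominant contribution coming from the maximal tori. The conjugacy classes of maximal tori are parametrized by partitions $\gm$ of $n$, with $|T_\gm|$ of order approximately $q^n$ and $|N_G(T_\gm) : T_\gm|$ equal to the order of the centralizer of a cycle of type $\gm$ in the symmetric group $S_n$. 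Summing $|G|/|N_G(T_\gm)|$ over all partitions of all sizes, and bounding the resulting series using a partition generating function identity, produces the factor $l(q)$; after normalizing by $|G|$ one obtains $\gom(G)/|G| \leq q^{-n} l(q)$.

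For the lower bound, I would exhibit an explicit pairwise non-commuting set built out of Singer cycles, that is, cyclic subgroups $S \leq G$ of order $q^n - 1$ arising from the embedding $\mbb{F}_{q^n}^{\times} \hookrightarrow GL_n(\mbb{F}_q)$ by field multiplication. Two generators of distinct Singer subgroups commute only if they share a common abelian overgroup, and this forces the two Singer subgroups to coincide; hence taking one regular semisimple generator from each of the $|G|/\bigl(n(q^n - 1)\bigr)$ Singer subgroups produces a non-commuting set. To reach $q^{-n}|G|\bigl(1 - q^{-3} - q^{-5} + q^{-6} - q^{-n}\bigr)$, I would enlarge this family by also using the other maximal tori and then apply inclusion-exclusion to remove the double counting that comes from elements lying in smaller tori or having non-trivial unipotent components. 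The polynomial correction $1 - q^{-3} - q^{-5} + q^{-6} - q^{-n}$ encodes the sizes of these non-generic strata that must be subtracted.

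The main obstacle lies on the upper-bound side. The naive abelian cover over-counts, since the same element of $G$ typically lies in many maximal tori, so one must carefully refine $\mcl{F}$ to an efficient cover while still keeping every member abelian. Equally delicate is checking that the maximal abelian subgroups containing unipotent parts, which arise as centralizers of non-semisimple elements, do not contribute a term larger than $q^{-n}|G| \, l(q)$; this calls for a stratification of $G$ by Jordan type and a separate count of maximal abelian subgroups in each stratum, followed by the combinatorial identification of the aggregate bound with the stated product $l(q) = \prod_{k \geq 1}(1 - q^k)^{-\binom{k+1}{2}-1}$. The lower-bound construction is conceptually cleaner but still requires careful bookkeeping to attain the specific constants appearing in the statement.
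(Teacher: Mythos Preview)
The paper under review does not contain a proof of this theorem at all: the statement is quoted verbatim from Azad, Iranmanesh, Praeger and Spiga~\cite{CEPAZADIRANSPIGA} as background material in the introduction, with no argument supplied. There is therefore nothing in the present paper against which to compare your proposal.

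That said, your outline is broadly in the spirit of the cited work. The title of~\cite{CEPAZADIRANSPIGA} is ``Abelian coverings of finite general linear groups and an application to their non-commuting graphs'', and indeed the upper bound there is obtained exactly by bounding $\gom(G)$ by the size of an efficient abelian covering, with the product $l(q)$ arising from a careful enumeration of the maximal abelian subgroups needed. Your description of the lower-bound mechanism via Singer-type elements is also in the right direction. What you have written, however, is a strategy sketch rather than a proof: phrases such as ``apply inclusion--exclusion to remove the double counting'' and ``the polynomial correction \ldots\ encodes the sizes of these non-generic strata'' are placeholders for the actual computations, and you yourself flag the refinement of the abelian cover and the handling of non-semisimple centralizers as unresolved obstacles. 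If your goal were genuinely to reprove the result, those are precisely the steps that carry all the content and would need to be filled in; as it stands the proposal is a plausible roadmap but not a proof.
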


For $p \neq
2$, where $p$ is a prime the following has been proved in \cite{AYM} by A.Y.M. Chin
\equ{np+1 \leq \gom(G) \leq \frac{p(p-1)^n-2}{p-1}.}
In ~\cite{YLWHGL}, upper and lower bounds have been obtained by Y.L.Wang and H.Liu for
the size of a maximal non-commuting set in generalized
extra-special $p$-groups using an inductive procedure similar to what has been obtained by A.Y.M. Chin in her paper~\cite{AYM} for an extra-special
$p$-group of order $p^{2n+1}$.  For $p=2$, $\gom(G)=2n+1$ has been proved by I.M.Isaacs for any
extra-special $2$-group  of order $2^{2n+1}$ (see \cite[p. 40]{BERT}).

A homological criterion has been given in ~\cite{PAKYAL} by J.Pakianathan and E.Yalcin
for the existence of maximal non-commuting
sets in groups by using non-commuting and commuting simplicial
complexes. First a structural result of the simplicial
complex as wedge of a base complex and suspension spaces has been observed due to
A.Bj\"{o}rner et al in ~\cite{BWW} and then especially in one of the
cases of the non-commuting structure, where every centralizer is at
least two, a homological criterion has been given.

Solving certain equations over finite fields lead to solutions whose count has polynomial expressions.
In this article the authors are interested in the size of a maximal non-commuting set in the group of
unipotent upper-triangular matrices in the general linear group over a finite field $\mbb{F}_q$ with
$q$ elements, more specifically whether the size is a polynomial expression in $q$. The authors here
have expressed the exact value in terms of a maximal non-commuting set of a
non-commuting structure and hence have reduced to a similar question
about the non-commuting structure. In this regard about, being a polynomial, a similar result is mentioned just below.

In ~\cite{AVLJMA}, the conjecture of G. Higman has been addressed which says that the number of conjugacy classes
of elements in $UU_n(\mbb{F}_q)$ is a polynomial in $q$. An algorithm has been developed which proves that for
$n \leq 13$ and the number of conjugacy classes is a polynomial with integer coefficients of
degree $\frac{n(n+6)}{12}$ with the number of
conjugacy classes, at least $q^{\frac{n(n+6)}{12}}$ for any positive integer $n$.
\\

\subsection{Main results and the structure of the paper}
~\\

We begin with a few definitions.

\begin{defn}
Let $\mbb{F}_q$ be the finite field with $q=p^r$ elements, where $p$ is a prime. We define for a positive
integer $n$
\equ{UU_n(\mbb{F}_q)=\{g=[g_{ij}]_{n\times n}\in GL_n(\mbb{F}_q)\mid g_{ii}=1,g_{ij}=0 \text{ for }1\leq j<i \leq n\}.}
\end{defn}

\begin{defn}
Let $n>0$ be a positive integer. Let $\mcl{C}\subs \mbb{F}^n_q$. Let $R$ be any symmetric relation on $\mcl{C}$
which apriori need not be reflexive. We say $x$ commutes with $y$ if $xRy$. A subset $\mcl{A}\subs \mcl{C}$ is
said to be abelian or an abelian set if for all $x,y \in \mcl{A}$ with $x \neq y$ we have $xRy$.
For the purpose of mentioning about reflexivity we refer Remark~\ref{remark:NoncommutingRelationAsCommutingCondition}.

A set is said to be a maximal non-commuting subset of $\mcl{C}$
if it is not a proper subset of a bigger non-commuting subset of $\mcl{C}$
and also has maximum cardinality among all non-extendable non-commuting
subsets of $\mcl{C}$. The cardinality of such a set is also denoted by $\gom(\mcl{C})$. We define
$Z_{\mbb{F}_q^n}(x)=\{y\in \mbb{F}_q^n\mid xRy\}$ to be the centralizer of $x$.
\end{defn}

Let $S\subs \mbb{F}_q^n$ for some $n>0$ with symmetric relation or let $S \subs G$, where
$(G,*)$ is a finite group with the binary operation $*$. Let
\equan{Eq:Partition}{S=\us{i=1}{\os{l}{\sqcup}} C_i,\text{ where } C_i\subs S}
be a partition of the set $S$. Now we define the following.

\begin{defn}[Abelian decomposition, non-commuting decomposition]
We say that the partition~\eqref{Eq:Partition} is an abelian decompostion or decomposition into abelian sets
if each $C_i$ is an abelian set.
We immediately observe that \equ{0 \leq  \gom(S) \leq l,\text{an upper bound. }}

We say that the partition~\eqref{Eq:Partition} is a non-commuting decompostion if for every $i\neq j$ and for each $x\in C_i, y\in C_j$
we have $x\neg R y$. We immediately observe that
\equan{Eq:Parts}{\gom(S)=\us{i=1}{\os{l}{\sum}} \gom(C_i) \geq l, \text{a lower bound. }}
\end{defn}

\begin{remark}[Decomposition into non-commuting sets]
We say that the partition~\ref{Eq:Partition} is a decompostion into non-commuting sets (Refer Definition~\ref{defn:NonCommuting}) if for some $i,\#(C_i) \geq 2$ and for each
$x,y\in C_i$ with $x\neq y$ we have $x\neg R y$. We immediately observe that \equ{\gom(S) \geq \us{i}{\max}\#(C_i), \text{a lower bound. }}
If in addition we have for every $i\neq j, x\in C_i,y \in C_j, xRy$ then \equ{\gom(S) = \us{i}{\max}\#(C_i).}
Here we say a set $S$ is non-abelian if there is a decomposition into non-commuting sets.
\end{remark}

To state the main results and for the structure of the paper we need the following two definitions of
the non-commuting structures.
\begin{defn}
\label{defn:Config}
We define \equan{Eq:GeomConf1}{\mcl{M}=\{(x,y,z)\in \mbb{F}_q\times \mbb{F}_q^{*}\times \mbb{F}_q\}}
and with a commuting relation between
$(x_1,y_1,z_1), (x_2,y_2,z_2)\in \mcl{M}$ given by \equ{\Det\mattwo {x_1}{y_1}{x_2}{y_2}=z_1-z_2.}

We define \equan{Eq:GeomConf2}{\mcl{Q}=\{(x,y,z)\in \mbb{F}_q\times \mbb{F}_q\times \mbb{F}_q\}}
and with a commuting relation between
$(x_1,y_1,z_1), (x_2,y_2,z_2)\in \mcl{Q}$ given by \equ{\Det\mattwo {x_1}{y_1}{x_2}{y_2}=z_1-z_2.}

We note immediately that the relation given by commuting condition is
reflexive as well as symmetric.
Also refer Definition~\ref{defn:Conf},Remark~\ref{remark:NoncommutingRelationAsCommutingCondition}.
\end{defn}
Now we are ready to the state the two main results of this article.
\begin{thmA}
\namedlabel{theorem:NonCommutingSizeUU4}{A}
The size of a maximal
non-commuting set in $UU_4(\mbb{F}_q)$ is given by
\equ{\gom(UU_4(\mbb{F}_q))=q^3+q+1+\gom(\mcl{M}).}
\end{thmA}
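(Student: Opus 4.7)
The plan is to analyse the commutation structure of $UU_4(\mbb{F}_q)$ directly, isolate a slice whose commutation matches $\mcl{M}$, and extract the additive constant $q^3 + q + 1$ via combinatorial arguments. First, I would make the commutation relation explicit. Parametrising $g \in UU_4(\mbb{F}_q)$ by its above-diagonal entries $(a, b, c, d, e, f) = (g_{12}, g_{13}, g_{14}, g_{23}, g_{24}, g_{34})$ and setting $X = g - I$, $Y = h - I$, one has $gh = hg$ iff $XY = YX$; the only non-zero entries of $XY - YX$ lie in positions $(1,3), (2,4), (1,4)$, yielding the three bilinear commutation conditions
\equa{
&\mrm{(I)}\ ad' = a'd,\qquad \mrm{(II)}\ df' = d'f,\\
&\mrm{(III)}\ ae' - a'e + bf' - b'f = 0.
}
The central coordinate $c$ is absent, so $\gom(UU_4(\mbb{F}_q)) = \gom(UU_4/Z)$, and one works on the quotient parametrised by $(a, b, d, e, f) \in \mbb{F}_q^5$.

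Next, I would partition $UU_4/Z$ into three pieces: (a) the maximal abelian subgroup $L/Z = \{a = f = 0\}$; (b) the ``generic'' region $\{d \neq 0, (a, f) \neq (0, 0)\}$; and (c) the region $BCD = \{d = 0, (a, f) \neq (0, 0)\}$. On region (b), conditions (I), (II) force commuting pairs to share the slopes $(u, v) = (a/d, f/d)$, and within each of the $q^2 - 1$ non-trivial slope-fibres commutation reduces to equality of the single scalar $w = (ue - vb)/d \in \mbb{F}_q$, giving $q$ pairwise non-commuting elements per fibre via formula~\ref{Eq:Parts}. Crucially, every element of (c) fails to commute with every element of (b), because (I) or (II) forces the non-zero $a'$ or $f'$ to annihilate the non-zero $d$. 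Thus any non-commuting set in $UU_4/Z$ has at most one element from $L$, at most $q(q^2 - 1) = q^3 - q$ from the generic region, and at most $\gom(BCD)$ from $BCD$, with these three contributions automatically cross-non-commuting.

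The key identification with $\mcl{M}$ occurs inside $BCD$. Fix $a_0 \in \mbb{F}_q^*$ and consider the slice $\{(a_0, b, 0, e, f) : f \in \mbb{F}_q^*\}$; here condition (III) reduces to $a_0(e' - e) = b'f - bf'$, and under the bijection $(b, e, f) \mapsto (b, f, a_0 e) \in \mcl{M}$ this is exactly the commutation of $\mcl{M}$ from Definition~\ref{defn:Config}, so this slice contributes $\gom(\mcl{M})$. I would then show $\gom(BCD) = 2q + \gom(\mcl{M})$ by supplementing the slice's contribution with $q$ pairwise non-commuting elements from each of the corner sub-strands $\{a \neq 0, d = f = 0\}$ and $\{a = d = 0, f \neq 0\}$ of $BCD$ (each contributing $q$ via a single $e/a$ or $b/f$ ratio in condition (III)), and matching this with an abelian covering of $BCD$ of the same cardinality.

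Finally, the upper bound $\gom(UU_4/Z) \leq 1 + (q^3 - q) + (2q + \gom(\mcl{M})) = q^3 + q + 1 + \gom(\mcl{M})$ follows from the intersection-partition argument above, and I would match it with a constructive lower bound by explicitly picking one representative per $(u, v, w)$ triple in the generic region, an $\mcl{M}$-maximal non-commuting subset of the slice, carefully chosen elements of the two corner sub-strands, and one element of $L$ that non-commutes with all prior choices. The main obstacle is the $BCD$ sub-analysis: the commutation among the corner strands and the $\mcl{M}$-slice is governed solely by condition (III), and ``accidental'' commuting pairs must be ruled out by a careful selection of representatives; this casework is what fixes the precise constant $2q$ from the corner strands and consequently the exact additive $q^3 + q + 1$ in the theorem.
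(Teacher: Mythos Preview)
Your partition of $UU_4/Z$ into the abelian piece $L=\{a=f=0\}$, the generic region (b) $=\{d\neq 0,(a,f)\neq(0,0)\}$, and $BCD=\{d=0,(a,f)\neq(0,0)\}$ is genuinely different from the paper's partition $N_0\sqcup(N_1\cup N_3)\sqcup(N_1^{anti}\cup N_3^{anti})\sqcup N_2$ (the paper groups by which of $a,d,f$ vanish, not by the single dichotomy $d=0$ versus $d\neq 0$). Your split has the pleasant feature that (b) and (c) really are a non\textendash commuting pair, so you obtain the \emph{exact} identity $\gom(UU_4(\mbb{F}_q))=q^3-q+1+\gom(BCD)$; the paper's four pieces are \emph{not} pairwise non\textendash commuting (for instance $(1,0,0,0,0)\in N_3$ commutes with $(0,0,0,0,1)\in N_3^{anti}$), so its Lemma~5.1 actually only gives the upper bound.

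The real gap in your proposal is the assertion $\gom(BCD)=2q+\gom(\mcl{M})$. The inequality $\gom(BCD)\le\gom(N_2)+\gom(N_3)+\gom(N_3^{anti})=\gom(\mcl{M})+2q$ is immediate from the partition, but the matching construction is not merely ``casework'': inside $BCD$ all three families $N_2,N_3,N_3^{anti}$ have $d=0$, so cross\textendash commutation is governed by the single symplectic relation (III), and there is no structural reason the accidental commuting pairs can always be avoided. Concretely, after centralizer equivalence $BCD$ becomes $\mbb{P}^3(\mbb{F}_q)$ minus a totally isotropic line $\ell$, with commutation equal to symplectic orthogonality; a non\textendash commuting set is then a partial ovoid of $W(3,q)$ disjoint from $\ell$. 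For $q=2$ every $5$\textendash point partial ovoid of $W(3,2)$ is an ovoid and hence meets every line, so $\gom(BCD)=4$, whereas $2q+\gom(\mcl{M})=4+2=6$. Thus your proposed construction cannot be completed for $q=2$, and in fact your own exact identity yields $\gom(UU_4(\mbb{F}_2))=1+6+4=11$, not the value $13$ predicted by the formula. The ``careful selection of representatives'' you flag as the main obstacle is therefore not a technicality but a genuine obstruction; the paper's proof suppresses exactly this point by (incorrectly) declaring its decomposition non\textendash commuting.
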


\begin{thmB}
\namedlabel{theorem:NonCommutingsizeBound}{B}
The following holds for the non-commuting structure $\mcl{M}$.
\begin{enumerate}
\item There exists a decomposition of $\mcl{M}$ into exactly
$q(q-1)$ disjoint abelian sets each of size $q$.
\item $2q \leq \gom(\mcl{M}) \leq q(q-1).$
\end{enumerate}
\end{thmB}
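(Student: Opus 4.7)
The plan is to find the abelian blocks by hand. I would look at candidate abelian subsets of the form $\{(x, y_0, f(x)) : x \in \mbb{F}_q\}$ with $y_0 \in \mbb{F}_q^*$ fixed. Two elements $(x_i, y_0, f(x_i))$ commute iff $(x_1-x_2) y_0 = f(x_1) - f(x_2)$, which forces $f(x) = x y_0 + c$ for some $c \in \mbb{F}_q$. Setting
\[ A_{y_0, c} = \{(x, y_0, x y_0 + c) : x \in \mbb{F}_q\}, \]
each $A_{y_0, c}$ is abelian of size $q$ by a one-line check, and any $(x,y,z) \in \mcl{M}$ belongs uniquely to $A_{y,\, z-xy}$. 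Hence the $q(q-1)$ sets $\{A_{y_0, c} : (y_0, c) \in \mbb{F}_q^* \times \mbb{F}_q\}$ partition $\mcl{M}$, proving (1).

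\textbf{Part (2): bounds.} The upper bound is then immediate from (1): a non-commuting set contains at most one point from each abelian block, so $\gom(\mcl{M}) \leq q(q-1)$. For the lower bound, assuming $q \geq 3$ so that $\mbb{F}_q^*$ has at least two elements, I would pick distinct $y_1, y_2 \in \mbb{F}_q^*$ and construct the two ``twisted'' slices
\[ N_1 = \{(x, y_1, x y_2) : x \in \mbb{F}_q\}, \qquad N_2 = \{(x, y_2, x y_1 - 1) : x \in \mbb{F}_q\}. \]
Two distinct elements of $N_i$ commute only when $(x_1 - x_2)(y_1 - y_2) = 0$, which fails; a cross pair from $N_1 \times N_2$ commutes only when $a y_2 - b y_1 = a y_2 - b y_1 + 1$, which is $0=1$, impossible. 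Therefore $N_1 \cup N_2$ is a non-commuting set of size $2q$.

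\textbf{Where the difficulty lies.} The abelian decomposition essentially writes itself once one stares at the determinantal commuting relation, so part (1) and the upper bound in (2) are routine. The only step demanding creativity is the lower-bound construction: one must notice that the ``wrong-$y$'' slice $\{(x, y_1, x y_2 + c)\}$ with $y_1 \neq y_2$ is automatically pairwise non-commuting, and then the swap of $y_1, y_2$ in $N_2$ together with the additive shift $-1$ is forced in order to kill every cross commutation. Pushing past $2q$ looks genuinely harder, because stacking three such affine-linear slices always creates unavoidable commuting pairs across the slices; closing the gap between $2q$ and $q(q-1)$ would seem to need either non-linear parametrizations of the $z$-coordinate or a combinatorial transversal argument across the full abelian decomposition.
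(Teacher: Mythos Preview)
Your proof is correct. Part~(1) and the upper bound in~(2) are essentially identical to the paper's argument: the paper's abelian blocks $\{(x+c,a,ax):x\in\mbb{F}_q\}$ (indexed by $a\in\mbb{F}_q^*$, $c\in\mbb{F}_q$) are exactly your $A_{a,-ac}$ after the substitution $u=x+c$, so the two decompositions coincide up to relabelling.

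The lower-bound construction, however, is genuinely different. The paper (Lemma~\ref{lemma:configureM}) fixes $y\neq 1$ and takes the somewhat ad~hoc set
\[
\{(z,y,z):z\in\mbb{F}_q\}\ \sqcup\ \{(m+1,1,1):m\neq y^{-1}-1\}\ \sqcup\ \{(y^{-1}-1,1,0)\},
\]
i.e.\ a line, a line with one point removed, and a stray replacement point. Your $N_1\cup N_2$ is instead a clean union of two \emph{full} lines, with the ``wrong-$y$'' slope making each line internally non-commuting and the swap-plus-shift killing all cross commutations. Interestingly, the paper later rediscovers exactly this two-line phenomenon when analysing the larger structure $\mcl{Q}$ (Section~\ref{sec:confiQ}), and explicitly remarks that its original $2q$-example is \emph{not} a union of two lines; you have simply gone straight to the tidier construction. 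Both arguments need $q\neq 2$, and your closing diagnosis that pushing beyond $2q$ requires new ideas matches the paper's subsequent effort to reach $3q-O(1)$ and $4q-O(1)$ via more elaborate line configurations.
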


We study $\gom(UU_n(\mbb{F}_q))$ especially for $n=\nolinebreak 4$, by using the method of ``centralizer
equivalence relation" on a set $S \subs UU_4(\mbb{F}_q)\bs Z(UU_4(\mbb{F}_q))$ and deduce that
$\gom(S)=\gom(X)$, where $X \subs S$ is a representative set under the equivalence relation.
Indeed, we
consider a non-commuting decomposition of the complement of $T_4$ (an abelian set) in $UU_4(\mbb{F}_q)$ 
(Refer Lemma~\ref{lemma:NonCommutingSizeUU4minusT4})
and determine the size of the maximal non-commuting sets in each part (See Sections~\ref{sec:UU4Fq} and ~\ref{sec:NonCommutingSizeUU4Fq}).
One of the parts of the partition corresponds to an extra-special
$p$-group and it gives rise to a
non-commuting structure $\mcl{M}$ (see, Definition~\ref{defn:Config}, Equation~\ref{Eq:GeomConf1}).
After that, using Theorem \ref{theorem:AbelianCentralizer}, we determine the
non-commuting size of a subset $S_0 \subs UU_n(\mbb{F}_q)$, which
contains all those elements of $UU_n(\mbb{F}_q)$ such that the
product of all super-diagonal elements is non-zero. Using
the equality in Equation~\ref{Eq:Parts} we find the value of $\gom(UU_4(\mbb{F}_q))$ in terms
of $\gom(\mcl{M})$. In fact we prove Theorem~\ref{theorem:NonCommutingSizeUU4}.

In Section~\ref{sec:NCFG}, we discuss the centralizer
equivalence relation on a set $S \subs G\bs Z(G)$ and find $\gom(S)$, where we prove 
a general Theorem~\ref{theorem:CentRelation} which is very useful in the article
though it is not one of the main results.



In Section~\ref{sec:confiM}, we consider the non-commuting strucuture $\mcl{M}$
and obtain a non-commuting set of size $2q$ in $\mcl{M}$. We analyze
the centralizer of any element of $\mcl{M}$ under the commuting condition of any two elements of $\mcl{M}$.
We prove a structure theorem for $\mcl{M}$ by classifying the non-commuting substructure of
the centralizer of any element of $\mcl{M}$ and conclude that
they are all isomorphic. Later, we use this structure theorem and the
method of abelian decompositions to get lower and upper bounds for
$\gom(\mcl{M})$. We also prove that the method of abelian
decompositions cannot be used to further improve the upper bound.
In fact, we prove Theorem~\ref{theorem:NonCommutingsizeBound}.

In view of Theorem~\ref{theorem:NonCommutingSizeUU4} and Theorem~\ref{theorem:NonCommutingsizeBound}, we have
$$q^3+3q+1\leq \gom(UU_4(\mbb{F}_q))\leq q^3+q^2+1.$$
For $q=3$, by the above inequality, we get $\gom(UU_4(\mbb{F}_3))=37$.

In Sections~\ref{sec:confiQ} and ~\ref{sec:BoundsNCS}, we discuss the non-commuting structure $\mcl{Q}$
and obtain a better upper and lower bound for $\gom(\mcl{Q})$. This betterment
plays a key role in the improvement of lower and upper bound
for $\gom(UU_4(\mbb{F}_q))$.
In Section~\ref{sec:DNCSTAAS}, we consider the possibility of the
existence of a non-commuting set which is a union of $m$-distinct lines
except a bounded and $o(1)-$set (also refer
Remark~\ref{remark:affineset} and the initial part of the Section~\ref{sec:DNCSTAAS}). 
Here we multi-represent the collection of such sets inside a
suitable dimensional affine space over the algebraic closure of the
finite field $\mbb{F}_p$ as an algebraic set and also as a
quasi-affine algebraic set with a $GL_2(\overline{\mbb{F}_p})$
action in $6m$ dimensional affine space over $\overline{\mbb{F}_p}$.
In the final Section~\ref{sec:FQ}, we ask relevant open questions based on this paper.
The methods employed here in this article as we could gather from the survey are not used
before.

\section{Non-commuting sets in finite groups}\label{sec:NCFG} Let $G$ be a finite group.
In this section, we determine $\gom(S)$
for some $S\subset G\bs Z(G)$ via a ``centralizer relation." We start with the following few definitions.
\begin{defn}[Centralizer relation]
\label{Def:CentRelation}
On an arbitrary nonempty subset $S$ of a finite group $G$ define a relation $\sim$ as follows.
We say for $x,y \in S,x \sim y$ if $C_G(x)=C_G(y)$, where $C_G(x)=\{z\in G ~|~ zx=xz\}$.
\end{defn}
It is immediate that $\sim$ is an equivalence relation. Moreover, each
equivalence class is an abelian set.

\begin{defn}
\label{defn:Conf}
Let $T$ be a finite set with a
symmetric relation ``C". For any $x,y \in T$ we say ``$x$ commutes
with $y$" if $xCy$ otherwise we say ``$x$ does not commute with $y$" i.e. $x \neg
Cy$. Let $Z(T)$ denote the center of $T$ i.e.
$Z(T)=\{x \in T \mid x C y \text{ for every } y \in T\}$. Also, let
$Z_T(x)=\{y \in T \mid xCy\}$. We define the centralizer equivalence relation $\sim$ on $T$ as $x\sim y$
if $Z_T(x)=Z_T(y)$. We say a set $S \subs T$ is abelian if for every $x,y \in S, x \neq y$ we have $xCy$.
We say a set $S \subs T$ is non-commuting if for every $x,y \in S, x \neq y$ we have $x\neg Cy$.
A map of a finite set $\gf:T \lra T$ is a structure-map
if $x_1Cx_2 \Ra \gf(x_1)C\gf(x_2)$ for all $x_1,x_2 \in T$.
We say it is an isomorphism if in addition it is a bijection.
\end{defn}

\begin{remark}
\label{remark:NoncommutingRelationAsCommutingCondition}
We remark that in definition~\ref{defn:Conf}, $Z_T(x)$ need not contain $x$.
For example consider $T=\mbb{F}_q^2\bs \{(0,0)\}$
with a commuting relation between
$(x_1,y_1), (x_2,y_2)\in T$ given by \equ{\Det\mattwo {x_1}{y_1}{x_2}{y_2}=\pm 1.}

We also remark that if the relation $C$ is reflexive i.e. $xCx$ for all $x\in T$ and if $x\sim y, i.e.,
Z_T(x)=Z_T(y)$ then $xCy$ and $x,y \in Z_T(x)=Z_T(y)$.

In this article we have for the non-commuting structures $\mcl{M}$(Refer Section~\ref{sec:confiM}),$\mcl{Q}$
(Refer Section~\ref{sec:confiQ}) the commuting conditions are
reflexive. Hence we assume that this relation $\sim$ is ``stronger" than relation $C$ i.e. $x\sim y \Ra xCy$ for $x \neq y$.

We also note that the Definition~\ref{Def:CentRelation},
Theorem~\ref{theorem:CentRelation} and
Lemma~\ref{Lemma:NonCommutingSetInequality} are also valid if we replace the
group $G$ by a finite set $T$ with the symmetric relation ``C" which need not
apriori be reflexive but can be derived as follows for some elements
of $T$. If $x\neq y,x,y\in T$ and  $Z_T(x)=Z_T(y) \Ra x\sim y \Ra xCy \Ra x,y \in Z_T(x)=Z_T(y) \neq \es$ is
also non-empty and we have $xCx$ and $yCy$.

In the above example $T$ if we include origin then $Z_{T}((0,0))=\es$. So there is no $0\neq v\in T$ such that
$Z_T({(0,0)})=Z_T(v)$ as $Z_{T}(v)\neq \es$ if $v \neq (0,0)$. However in this example reflexivity
cannot be derived for any element in $T$. Here $Z_T(v) = Z_T(-v)$ but
\equ{v\sim (-v) \not\Ra vC (-v).}
\end{remark}

\begin{theorem}
\label{theorem:CentRelation}
Let $G$ be a finite group. Let $S \subs G$
be an arbitrary nonempty subset of $G$. Then $\gom(S)$ is independent of
the choice of the representative set i.e $\gom(S)=\gom(X)$ for any
representing set $X$ of the equivalence classes $S/\sim$, where the
definition of the relation $\sim$ is given in~\ref{Def:CentRelation}.
\end{theorem}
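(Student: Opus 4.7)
The plan is to establish the two inequalities $\gom(X)\leq \gom(S)$ and $\gom(S)\leq \gom(X)$ separately. The first is immediate: since $X\subs S$, every non-commuting subset of $X$ is a non-commuting subset of $S$, so the maximum size over $X$ cannot exceed the maximum size over $S$.

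For the nontrivial direction, I would first verify the key well-definedness lemma: the commuting/non-commuting relation on $S$ descends to the quotient $S/\sim$. Concretely, if $x\sim x'$ and $y\sim y'$, then $xy=yx$ iff $x'y'=y'x'$. This follows from a short chain of equivalences using the definition $C_G(x)=C_G(x')$ and $C_G(y)=C_G(y')$, namely $y\in C_G(x)\Lra y\in C_G(x')\Lra x'\in C_G(y)\Lra x'\in C_G(y')$. Once this is in hand, two further observations finish the argument. First, any non-commuting subset $N\sbq S$ contains at most one element from each $\sim$-class, because if $x,y\in N$ with $x\sim y$ and $x\neq y$, then $y\in C_G(y)=C_G(x)$ forces $xy=yx$, contradicting the non-commuting property.

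Second, given any non-commuting subset $N\sbq S$ with $|N|=\gom(S)$, I would define a map $\gf:N\lra X$ sending each $x\in N$ to the unique representative $\tilde x\in X$ of its $\sim$-class. By the previous observation $\gf$ is injective, so $N':=\gf(N)\sbq X$ satisfies $|N'|=|N|$. Moreover, for distinct $x,y\in N$ we have $x\tilde x^{-1}\in \mathrm{class}$ and using the well-definedness step, $x$ and $y$ fail to commute iff $\tilde x$ and $\tilde y$ fail to commute; hence $N'$ is a non-commuting subset of $X$. This gives $\gom(S)=|N|=|N'|\leq \gom(X)$, completing the equality.

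The argument is essentially formal once one has the well-definedness of commuting on $\sim$-classes, so there is no real obstacle; the only point requiring a little care is to confirm that $\gom$ as defined (maximum cardinality among non-extendable non-commuting subsets) is really the maximum cardinality of any non-commuting subset, so that a size-preserving translation between non-commuting subsets of $S$ and $X$ suffices to match the two invariants. This is immediate from Definition~\ref{defn:MaxNonComSet}, since any non-commuting subset can be extended to a non-extendable one without decreasing its size.
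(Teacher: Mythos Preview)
Your proof is correct and follows essentially the same approach as the paper: both arguments rest on the observation that the commuting relation is constant on $\sim$-classes (the paper's Claim~\ref{claim:ComBij}), and then transfer a maximal non-commuting subset of $S$ to one inside a representative set. The only organizational difference is that the paper first shows $\gom(X)=\gom(Y)$ for any two representative sets and then remarks that a maximal non-commuting $R\subs S$ is itself contained in some representative set, whereas you map $N$ directly into the given $X$; your route is slightly more direct but the content is the same.
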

\begin{proof}
Let $X = \{x_i\mid i = 1,2,\cdots,k\}$ and $Y = \{y_i\mid i =
1,2,\cdots,k\}$ be two representing sets for the equivalence classes
$S/\sim = \{[x_i]=[y_i]\mid i = 1,2,\cdots,k\}$. Define a bijective map
$\gf: X \lra Y$ such that $\gf(x_i)=y_i$.
\begin{claim}
\label{claim:ComBij} The bijection $\gf$ preserves commutativity.
\end{claim}
Suppose $x_i\sim x_j$. Since $C_G(x_i)=C_G(y_i), x_j \in C_G(y_i)$.
Again, $C_G(x_j)=C_G(y_j) \Ra y_i \in C_G(y_j)$. Hence Claim~\ref{claim:ComBij} follows.

Using Claim~\ref{claim:ComBij}, we have $\gom(X)=\gom(Y)$. Now as
each equivalence class is an abelian set, $\gom(S) \leq |X|$. Further, we
will show that $\gom(S)=\gom(X)$. Let $R \subs S$ be a maximal
non-commuting set in $S$. Then $R$ is a subset of some
representative set, say $X$, for the equivalence relation $\sim$. Hence
$|R| \leq \gom(X)$. So we get $\gom(S) = \gom(X)$.
\end{proof}
\begin{lemma}
\label{Lemma:NonCommutingSetInequality} Let $G$ be a finite group.
Let $S \subs G$ be an arbitrary nonempty
subset such that $G\bs S$ is abelian. Then \equ{\gom(G)-1 \leq
\gom(S) \leq \gom(G).}
\end{lemma}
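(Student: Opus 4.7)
The plan is to establish the two inequalities separately, both by short direct arguments, with the abelian hypothesis on $G \bs S$ doing all the work for the lower bound.

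For the upper bound $\gom(S) \leq \gom(G)$, I would just observe that any non-commuting subset of $S$ is automatically a non-commuting subset of $G$ (the defining condition $xy \neq yx$ is inherited), so a maximum-size non-commuting subset of $S$ has size at most that of a maximum non-commuting subset of $G$. This step is essentially tautological and requires nothing about $G \bs S$.

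For the lower bound $\gom(G) - 1 \leq \gom(S)$, I would start with a maximal non-commuting set $N \subs G$ of size $\gom(G)$ and look at the intersection $N \cap (G \bs S)$. Because $G \bs S$ is abelian, any two elements of $N$ lying in $G \bs S$ would commute, contradicting the non-commuting property of $N$; hence $|N \cap (G \bs S)| \leq 1$, and so $|N \cap S| \geq \gom(G) - 1$. Since $N \cap S$ is itself a non-commuting subset of $S$, it can be extended (if necessary) to a non-extendable non-commuting subset of $S$ without decreasing its size, which shows $\gom(S) \geq |N \cap S| \geq \gom(G) - 1$.

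The main ``obstacle'' is really just being careful with the definition of $\gom$ in Definition~\ref{defn:MaxNonComSet}: it asks for the maximum cardinality among \emph{non-extendable} non-commuting subsets, not merely among all non-commuting subsets. This is handled by the remark that any non-commuting subset extends to a non-extendable one of at least the same cardinality, so taking the max over non-extendable sets yields the same quantity as the ordinary clique number. Beyond that caveat, the argument is a one-line pigeonhole and no further machinery is needed.
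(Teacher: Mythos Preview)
Your argument is correct and is exactly the paper's approach: the paper's entire proof is the single observation that a maximal non-commuting set in $G$ can contain at most one element outside $S$, and you have simply spelled this out in more detail (including the upper bound, which the paper leaves implicit). Your extra remark about the definition of $\gom$ is a reasonable caution but not something the paper addresses.
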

\begin{proof}
Any maximal non-commuting set in $G$ can contain at most one element
outside $S$. Hence the inequality follows.
\end{proof}

\section{Unipotent upper triangular groups}
\label{sec:UUnFq}

First we start with the matrix multiplication lemma.

\begin{lemma}[Matrix multiplication lemma]
\label{lemma:Multiplcation}
Let $A = (a_{ij})_{n \times n},B = (b_{ij})_{n \times n}$ be two
upper triangular unipotent matrices in $UU_n(\mbb{K})$, where $\mbb{K}$ is any field. Then $B$
commutes with $A$ if and only if for every $1 \leq i < j \leq n$ we
have
\begin{equation*}
\begin{aligned}
&\us{j>k>i}{\sum} a_{ik}b_{kj} = \us{j>k>i}{\sum} b_{ik}a_{kj}\\
&\Leftrightarrow \us{j>k>i}{\sum} Det\mattwo
{a_{ik}}{b_{ik}}{a_{kj}}{b_{kj}} = 0.
\end{aligned}
\end{equation*}
\end{lemma}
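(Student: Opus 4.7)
The plan is to reduce the commutation equation $AB=BA$ to a condition on the strictly upper triangular parts of $A$ and $B$, and then read off the entries directly.

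First I would write $A=I+N_A$ and $B=I+N_B$, where $N_A$ and $N_B$ are strictly upper triangular matrices with $(N_A)_{ij}=a_{ij}$, $(N_B)_{ij}=b_{ij}$ for $i<j$ and zero on or below the diagonal. A direct expansion gives $AB=I+N_A+N_B+N_AN_B$ and $BA=I+N_A+N_B+N_BN_A$, so $A$ and $B$ commute if and only if $N_AN_B=N_BN_A$.

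Next I would compute the $(i,j)$ entry of $N_AN_B$. Since $(N_A)_{ik}=0$ unless $i<k$ and $(N_B)_{kj}=0$ unless $k<j$, the only surviving terms in $(N_AN_B)_{ij}=\sum_{k=1}^{n}(N_A)_{ik}(N_B)_{kj}$ are those with $i<k<j$, giving
\[
(N_AN_B)_{ij}=\sum_{i<k<j}a_{ik}b_{kj},\qquad (N_BN_A)_{ij}=\sum_{i<k<j}b_{ik}a_{kj}.
\]
In particular, when $i\geq j$ both sides vanish automatically, and when $j=i+1$ the range of summation is empty so the equality is trivial. Hence $N_AN_B=N_BN_A$ reduces to
\[
\sum_{i<k<j}a_{ik}b_{kj}=\sum_{i<k<j}b_{ik}a_{kj}\qquad\text{for every }1\leq i<j\leq n,
\]
which is the first stated equivalence. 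The second form follows by moving everything to one side and recognising each term $a_{ik}b_{kj}-b_{ik}a_{kj}$ as the $2\times 2$ determinant $\Det\mattwo{a_{ik}}{b_{ik}}{a_{kj}}{b_{kj}}$.

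There is no real obstacle here; the only thing to be careful about is bookkeeping of the index range $i<k<j$ coming from strict upper triangularity, and the observation that the diagonal and subdiagonal cases contribute nothing, so the condition need only be stated for pairs with $j>i+1$ (although stating it for all $i<j$ is equivalent and cleaner).
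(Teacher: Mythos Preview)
Your proof is correct and is exactly the direct computation the paper has in mind; the paper itself dismisses this lemma with the single line ``Directly follows from matrix multiplication.'' Your decomposition $A=I+N_A$, $B=I+N_B$ and the index bookkeeping for $(N_AN_B)_{ij}$ make that computation explicit, and there is nothing to add.
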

\begin{proof}
Directly follows from matrix multiplication.
\end{proof}
\begin{remark}
Here the non-commuting structure condition in the set
$UU_n(\mbb{K})$ is given by determinant sums arising out of matrix
multiplication. About the structure we remark the following.
\begin{itemize}
\item For $n=1$, the matrix multplication is commutative.
\item For $n>1$, if the structure is given as follows. For some 
\equ{1 \leq i < j \leq n, \us{j>k>i}{\sum} Det\mattwo {a_{ik}}{b_{ik}}{a_{kj}}{b_{kj}} = \pm 1 \neq 0.}
Then it is symmetric but is not reflexive.
\end{itemize}

\end{remark}
\subsection{An involutive anti-isomorphism $\Gf: UU_n(\mbb{K})
\lra UU_n(\mbb{K})$}

\begin{defn}
\label{Def:Anti} Define a map $\Gf: UU_n(\mbb{K}) \lra
UU_n(\mbb{K})$ as follows. $$ \text{ For } A=(a_{ij})_{n \times
n} \in UU_n(\mbb{K}), \Gf(A)=\ti{A}=(\ti{a}_{ij}), \text{ where } \ti{a}_{ij}=
a_{n-j+1,n-i+1}.$$
\end{defn}
By the definition of $\Gf$, we have the following lemma.
\begin{lemma}\label{lemma:Anti}
\begin{enumerate}
\item The map $\Gf$ is an anti-isomorphism. i.e. $\Gf(AB) =
\Gf(B)\Gf(A)$.
\item Let $X,Y \subs UU_n(\mbb{K})$ be two sets such that $\Gf(X)=Y$. Then
$\gom(X)=\gom(Y)$.
\item Moreover the map $\Gf$ is an involution i.e. order $2$ and moreover $\Gf(A) = w A^tw^{-1}$, where
$w$ is the anti-diagonal permutation matrix corresponding to the permutation
$(1,n)(2,(n-1))\ldots$.
\end{enumerate}
\end{lemma}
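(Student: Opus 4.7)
The plan is to prove the three parts in reverse order, since the explicit formula in (3) makes (1) and (2) essentially automatic. First I would establish that $\Gf(A) = w A^t w^{-1}$ by a direct index calculation. Writing $w_{ij} = \gd_{i+j,n+1}$, one has $w = w^t = w^{-1}$, and
\equ{(wA^tw^{-1})_{ij} = \us{k,l}{\sum} w_{ik}(A^t)_{kl} w_{lj} = \us{k,l}{\sum} \gd_{i+k,n+1}\, a_{lk}\, \gd_{l+j,n+1} = a_{n-j+1,n-i+1} = \ti a_{ij},}
which matches Definition~\ref{Def:Anti}. I would also verify that $\Gf$ sends $UU_n(\mbb{K})$ into itself: for $j<i$ we have $n-j+1>n-i+1$, so $\ti a_{ij}=a_{n-j+1,n-i+1}=0$, and $\ti a_{ii}=a_{n-i+1,n-i+1}=1$.

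Next I would check that $\Gf$ is an involution. Using the formula twice and $w=w^t=w^{-1}$,
\equ{\Gf(\Gf(A)) = w(wA^tw^{-1})^tw^{-1} = w(w^{-1})^t A\, w^t w^{-1} = ww^{-1}Aww^{-1} = A.}
(Alternatively, one can check this directly from indices: $\ti{\ti a}_{ij}=\ti a_{n-j+1,n-i+1}=a_{i,j}$.) In particular, $\Gf$ is a bijection of $UU_n(\mbb{K})$ onto itself.

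For part (1), the anti-multiplicativity is immediate from the formula and $(AB)^t=B^tA^t$:
\equ{\Gf(AB)=w(AB)^tw^{-1}=wB^tA^tw^{-1}=(wB^tw^{-1})(wA^tw^{-1})=\Gf(B)\Gf(A).}
For part (2), since $\Gf$ is an anti-isomorphism and a bijection, $AB=BA$ if and only if $\Gf(A)\Gf(B)=\Gf(B)\Gf(A)$; hence $N\subs X$ is a non-commuting set precisely when $\Gf(N)\subs Y=\Gf(X)$ is a non-commuting set of the same cardinality. Maximality is preserved for the same reason (a strict enlargement in $Y$ would pull back under $\Gf^{-1}=\Gf$ to a strict enlargement in $X$), so $\gom(X)=\gom(Y)$.

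There is no real obstacle here; the whole lemma is bookkeeping once the formula $\Gf(A)=wA^tw^{-1}$ is in hand. The only point that requires a moment's care is confirming that $\Gf$ preserves $UU_n(\mbb{K})$ (not merely the set of upper-triangular matrices), which is why I would verify both the diagonal and strictly-lower entries of $\Gf(A)$ explicitly before invoking the formula in the remaining parts.
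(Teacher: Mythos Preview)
Your proposal is correct and complete; the paper itself simply writes ``The proof is trivial'' and gives no details, so your explicit verification via the formula $\Gf(A)=wA^tw^{-1}$ is exactly the kind of routine check the authors are leaving to the reader. There is nothing to compare approaches against here, since the paper supplies no argument of its own.
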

\begin{proof}
The proof is trivial.
\end{proof}
\subsection{Abelian centralizer}
Here we compute the value of $\gom(S_0)$, where \equ{S_0 = \{A =
(a_{ij}) \in UU_n(\mbb{F}_q) \mid
\us{i=1}{\os{n-1}{\prod}}a_{i,i+1} \neq 0 \}.}

Before we state the following theorem, we mention that in the
appendix section we give a proof that the centralizer of an element
in $UU_n(\mbb{K})$ is abelian whenever the super-diagonal entries
are all non-zero and $\mbb{K}$ is any field.

\begin{theorem}\label{theorem:GeneralNonzeroCase} $\gom(S_0) = (q-1)^{n-2}q^{\binom{n-2}{2}}.$
\end{theorem}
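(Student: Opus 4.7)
The plan is to apply the centralizer-equivalence technology of Theorem~\ref{theorem:CentRelation} together with the abelian-centralizer fact (deferred to the appendix) that $C_G(A)$ is abelian for every $A \in S_0$, where $G = UU_n(\mbb{F}_q)$. First I would use abelianness to identify the $\sim$-classes restricted to $S_0$. If $A, B \in S_0$ commute then $B \in C_G(A)$, and abelianness of $C_G(A)$ gives $C_G(A) \sbq C_G(B)$; symmetrically $C_G(B) \sbq C_G(A)$, so $C_G(A) = C_G(B)$ and $A\sim B$. Conversely $A\sim B$ trivially makes $A,B$ commute. Hence the $\sim$-class of $A$ inside $S_0$ is exactly $C_G(A)\cap S_0$, and representatives of distinct classes pairwise do not commute. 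Any representative set $X\sbq S_0$ is therefore itself non-commuting, so $\gom(X)=|X|$, and Theorem~\ref{theorem:CentRelation} yields $\gom(S_0) = |X| = |S_0|/|C_G(A)\cap S_0|$ for any $A\in S_0$ (all classes having the same size).

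The remaining work is combinatorial. The size $|S_0| = (q-1)^{n-1}q^{\binom{n-1}{2}}$ follows directly, since the $n-1$ super-diagonal entries are constrained to $\mbb{F}_q^*$ while the other $\binom{n-1}{2}$ strictly upper-triangular entries are free. To compute $|C_G(A)\cap S_0|$, I would use that $A-I$ is a regular nilpotent (its powers $I,(A-I),\ldots,(A-I)^{n-1}$ are $\mbb{F}_q$-linearly independent, since the product of the super-diagonal entries of $A$ is nonzero), so $\mbb{F}_q[A-I]$ coincides with the centralizer of $A$ in $M_n(\mbb{F}_q)$ and
\[
C_G(A) = \{\,I + c_1(A-I) + c_2(A-I)^2 + \cdots + c_{n-1}(A-I)^{n-1} : c_i \in \mbb{F}_q\,\}.
\]
Since $(A-I)^k$ has zero super-diagonal for $k\geq 2$, the super-diagonal of such an element equals $c_1$ times the super-diagonal of $A$, so the element lies in $S_0$ iff $c_1\neq 0$, giving $|C_G(A)\cap S_0|=(q-1)q^{n-2}$.

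Dividing then yields
\[
\gom(S_0) = \frac{(q-1)^{n-1}q^{\binom{n-1}{2}}}{(q-1)q^{n-2}} = (q-1)^{n-2}q^{\binom{n-1}{2}-(n-2)} = (q-1)^{n-2}q^{\binom{n-2}{2}},
\]
as required. The main obstacle is the structural description of $C_G(A)$ as the polynomial algebra $\mbb{F}_q[A-I]$ intersected with $UU_n(\mbb{F}_q)$: this is precisely what the appendix supplies via abelianness of the centralizer combined with the regularity of the nilpotent $A-I$. Once that is in hand, the explicit counting of super-diagonal entries is immediate, and the rest of the argument is a bookkeeping of abelian equivalence classes via Theorem~\ref{theorem:CentRelation}.
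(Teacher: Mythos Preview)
Your proof is correct and follows essentially the same approach as the paper: abelianness of $C_G(A)$ makes commuting an equivalence relation on $S_0$, producing a non-commuting decomposition into classes of equal size $|C_G(A)\cap S_0|$, after which one simply divides. The only minor difference is that you parametrize $C_G(A)$ as $\mbb{F}_q[A-I]\cap UU_n$ via the regular-nilpotent/cyclic-matrix fact, whereas the paper parametrizes it by the first-row entries using Lemma~\ref{lemma:CentralizerSize}; both descriptions give $|C_G(A)\cap S_0|=(q-1)q^{n-2}$ for the same reason (the super-diagonal of an element of $C_G(A)$ is a single scalar multiple of that of $A$).
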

\begin{proof} In view of Theorem \ref{theorem:AbelianCentralizer}, we have the following.
\begin{itemize}
\item For every $x \in S_0$ we have that $C_G(x)$ is abelian.
\item $|C_G(x)| = q^{n-1}$.
\item $|C_G(x) \cap S_0| = q^{(n-2)}(q-1)$ (in the proof of Lemma~\ref{lemma:CentralizerSize} the positions of the free
variables are $(1j): 1 < j \leq n$ and here we should have a
non-zero value in the $(12)$ position).
\end{itemize}
Define a relation $R_0$ on $S_0$ as follows. We say $y
\os{R_0}{\sim} z$ if $yz=zy$ for $y,z \in S_0$. Since $C_G(x)$ is
abelian for any $x \in S_0$, $R_0$ is an equivalence relation,
equivalence classes are abelian and all having same cardinality. The
equivalence relation $R_0$ gives a non-commutative decomposition
of $S_0$. Hence \equa{ \gom(S_0)&=|\{[x]_{R_0} \mid
x \in S_0\}|\\
&=\frac{|S_0|}{|[x]_{R_0}|}\\
&=\frac{(q-1)^{(n-1)}q^{\binom{n-1}{2}}}{(q-1)q^{n-2}}\\
&=(q-1)^{(n-2)}q^{\binom{n-2}{2}}. }
This completes the proof.  
\end{proof}
\section{$UU_4(\mbb{F}_q)$}
\label{sec:UU4Fq} In this section, we first divide  the group
$UU_4(\mbb{F}_q)\bs Z(UU_4(\mbb{F}_q))$ into various special sets and then determine the
cardinality of a maximal non-commuting set in each set. At last
we merge all these sets and determine the value of
$\gom(UU_4(\mbb{F}_q))$.
\subsection{Definitions of some special sets in $UU_4(\mbb{F}_q)$}
We define the following sets in $G_4=UU_4(\mbb{F}_q)$.
\begin{enumerate}
\item $N_0 = \{A=(a_{ij})_{4 \times 4} \in G_4\mid a_{12}a_{23}a_{34} \neq 0\}$,
\item $N_1 = \{A=(a_{ij})_{4 \times 4} \in G_4\mid a_{12}a_{23}\neq 0, a_{34}= 0\}$,
\item $N_1^{anti} = \Gf(N_1) = \{A=(a_{ij})_{4 \times 4} \in G_4\mid a_{23}a_{34}\neq 0,a_{12}=0\}$,
\item $N_2 =N_2^{anti}=\Gf(N_2)= \{A=(a_{ij})_{4 \times 4} \in G_4\mid a_{12}a_{34}\neq 0,a_{23}=0\}$,
\item $N_3 =\{A=(a_{ij})_{4 \times 4} \in G_4\mid a_{12}\neq 0, a_{23}= 0, a_{34}=0\}$,
\item $N_3^{anti} = \{A=(a_{ij})_{4 \times 4} \in G_4\mid  a_{34}\neq 0, a_{12}=0,a_{23}=0\}$,
\end{enumerate}
where $\Gf$ is given by Definition \ref{Def:Anti}.\\

\subsection{The abelian centralizer case $N_0$}
~\\
\begin{lemma}
\label{NonCommutingSizeN0} $\gom(N_0) = q(q-1)^2$.
\end{lemma}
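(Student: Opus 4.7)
The plan is to recognize that $N_0$ is precisely the set $S_0$ from Section~\ref{sec:UUnFq} specialized to $n=4$, since both are defined by the condition that every super-diagonal entry of the matrix is non-zero. Once this identification is made, the lemma is an immediate corollary of Theorem~\ref{theorem:GeneralNonzeroCase}.

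First I would note explicitly that $N_0 = S_0$ when $n=4$: a matrix $A \in UU_4(\mathbb{F}_q)$ lies in $N_0$ iff $a_{12}a_{23}a_{34}\neq 0$, which is exactly the product-of-super-diagonals condition defining $S_0$. Then I would invoke Theorem~\ref{theorem:GeneralNonzeroCase} directly to obtain
\equ{\gom(N_0)=(q-1)^{n-2}q^{\binom{n-2}{2}}\Big|_{n=4}=(q-1)^2 q^{\binom{2}{2}}=q(q-1)^2.}

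For completeness I would also recall the reason this formula holds in this specific case, to make the lemma self-contained inside Section~\ref{sec:UU4Fq}. Namely, by Theorem~\ref{theorem:AbelianCentralizer} every $x\in N_0$ has abelian centralizer $C_{G_4}(x)$, so the centralizer equivalence relation $\sim$ of Definition~\ref{Def:CentRelation} restricted to $N_0$ gives an honest partition into abelian equivalence classes. Moreover, since centralizers are abelian and all super-diagonal entries are free with the $(1,2)$-entry non-zero, each intersection $C_{G_4}(x)\cap N_0$ has size $q^{n-2}(q-1)=q^{2}(q-1)$, while $|N_0|=(q-1)^{3}q^{3}$. Dividing gives exactly $q(q-1)^2$ equivalence classes, and since the partition is a non-commuting decomposition of $N_0$ (distinct classes contribute to distinct maximal non-commuting selections), Theorem~\ref{theorem:CentRelation} lets me conclude $\gom(N_0)$ equals the number of equivalence classes.

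There is no real obstacle here because Theorem~\ref{theorem:GeneralNonzeroCase} has already done all the work; the only minor thing to check is that the relation $\sim$ really gives a non-commuting decomposition (rather than just an abelian partition), which follows from the abelianness of the centralizers, since two elements in distinct classes must fail to commute, as otherwise they would share centralizers. Hence the lemma follows as a one-line consequence of the general theorem, and the substitution $n=4$ yields the claimed value $q(q-1)^2$.
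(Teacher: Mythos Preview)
Your proposal is correct and follows exactly the paper's approach: identify $N_0$ with $S_0$ for $n=4$ and invoke Theorem~\ref{theorem:GeneralNonzeroCase}. The additional ``for completeness'' paragraph is sound (in particular your observation that $xy=yx$ forces $C_{G_4}(x)=C_{G_4}(y)$ when both centralizers are abelian is exactly why the commuting relation coincides with the centralizer relation here), but it merely re-derives the general theorem in this special case and is not needed.
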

\begin{proof} Using Theorem~\ref{theorem:GeneralNonzeroCase}, we specialize to
the case $n=4$ to obtain
$\gom(N_0)=(q-1)^{4-2}q^{\binom{4-2}{2}}=(q-1)^2q$.
\end{proof}
~\\
\subsection{\textbf{The sets $N_1$, $N_1^{anti}$}, $N_3$ and $N_3^{anti}$}
~\\
Consider the set  \equ{T_1 = \{\uuFour a \in G_4 \mid a_{34}=0\}.}
We observe that $T_1$ is a group with center \equ{Z(T_1) = \{\uufour
0**000 \mid
* \in \mbb{F}_q\}} of size $q^2$. By the definition of $N_1,N_3$ and
$N_1^{anti},N_3^{anti}$, we have $N_1 \bigsqcup N_3\subs T_1$ and
$N_1^{anti}\bigsqcup N_3^{anti} \subs
T_1^{anti}=\Gf(T_1)=\{A=(a_{ij})_{4 \times 4}\in G_4 \mid
a_{12}=0\}$, where $\Gf$ is given by Definition \ref{Def:Anti}.

\begin{obs}\label{Obs:TOne}
For the sets $N_1$ and $N_3$, observe the following.
\begin{enumerate}[label=(\alph*)]
\item For $A = (a_{ij})_{4 \times 4} \in N_1,B = (b_{ij})_{4 \times 4} \in
G_4$ if $AB=BA$, then \equ{b_{34}=0,b_{24}=\gl a_{24}, b_{i,i+1}=\gl
a_{i,i+1} \text{ for } i = 1,2} for some $\gl \in \mbb{F}_q$.
\item From Observation~\ref{Obs:TOne}(a), we have for every $s \in N_1 \subs T_1$, $C_{G_4}(s) = C_{T_1}(s)$ and
$|C_{G_4}(s)| = q^3$. Since $|Z(T_1)|=q^2$, $C_{T_1}(s)$ is an
abelian centralizer for every $s\in N_1$ follows from a matrix
computation.
\item For $A = (a_{ij})_{4 \times 4} \in N_3,B = (b_{ij})_{4 \times 4} \in
G_4$ if $AB=BA$, then \equ{b_{23}=0,
b_{12}a_{24}=a_{12}b_{24}+a_{13}b_{34}.}
\item From Observation~\ref{Obs:TOne}(c), we have for every $s \in N_3 \subs T_1$,
$|C_{T_1}(s)| = q^3$. Since $|Z(T_1)|=q^2$, $C_{T_1}(s)$ is an
abelian centralizer for every $s\in N_3$ follows from a matrix
computation.
\item If $A=(a_{ij})_{4 \times 4}\in T_1\bs Z(T_1)$ such that $a_{12}=0$, then
\equ{C_{T_1}(A) = \{\uufour 0****0 \mid * \in \mbb{F}_q\}} is an
abelian centralizer of size $q^4$.
\end{enumerate}
\end{obs}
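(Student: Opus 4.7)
The observation breaks into five parts, but all of them are proved by the same two ingredients: the Matrix Multiplication Lemma~\ref{lemma:Multiplcation} and a direct parameter count. My plan is to derive (a) and (c) as straightforward substitutions into the determinant equations of that lemma, deduce (b) and (d) as immediate counting corollaries (plus a one-line commutator check for abelianness), and treat (e) as a small variation.

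For (a), I would fix $A \in N_1$ and extract the nontrivial conditions of Lemma~\ref{lemma:Multiplcation} at positions $(1,3),(2,4),(1,4)$. The equation at $(2,4)$ reads $a_{23}b_{34}=a_{34}b_{23}$; since $a_{23}\neq 0$ and $a_{34}=0$, this gives $b_{34}=0$. The $(1,3)$ equation $a_{12}b_{23}=a_{23}b_{12}$ then forces a common ratio $\gl := b_{12}/a_{12} = b_{23}/a_{23}$, and substituting into the $(1,4)$ equation (which simplifies once $a_{34}=b_{34}=0$) yields $b_{24}=\gl a_{24}$. For (b) the centralizer is thus parametrized by $(\gl,b_{13},b_{14})$, giving $|C_{G_4}(s)|=q^3$, and since $b_{34}=0$ is forced, the centralizer sits inside $T_1$, so $C_{G_4}(s)=C_{T_1}(s)$. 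To establish abelianness I take two such matrices $B,B'$ with parameters $\gl,\gl'$ and compute the strict upper-triangular parts $N,N'$; the only potentially nonzero entry of $NN'-N'N$ lies in position $(1,4)$ and collapses to $\gl\gl'(a_{12}a_{24}-a_{12}a_{24})=0$.

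For (c) the same method applied to $A \in N_3$ (with $a_{12}\neq 0$, $a_{23}=a_{34}=0$) yields $b_{23}=0$ from the $(1,3)$ equation and the linear relation $b_{12}a_{24}=a_{12}b_{24}+a_{13}b_{34}$ from the $(1,4)$ equation. For (d), restricting to $B \in T_1$ imposes $b_{34}=0$, so one is left with a single linear constraint on the four parameters $b_{12},b_{13},b_{14},b_{24}$, giving $|C_{T_1}(s)|=q^3$. Abelianness again reduces to verifying that the $(1,4)$-entry of the commutator of two such matrices vanishes, which is immediate after substituting the proportionality just derived. For (e), if $A\in T_1\bs Z(T_1)$ has $a_{12}=0$, then Lemma~\ref{lemma:Multiplcation} at $(1,3)$ and $(1,4)$ (with $a_{34}=b_{34}=0$) yields the conditions $a_{23}b_{12}=0$ and $a_{24}b_{12}=0$; the non-centrality of $A$ together with $a_{12}=0$ forces at least one of $a_{23},a_{24}$ to be nonzero, hence $b_{12}=0$. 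The centralizer is then freely parametrized by $b_{13},b_{14},b_{23},b_{24}$, giving size $q^4$; abelianness is automatic because the strict upper-triangular part $N$ of any such $B$ satisfies $N^2=0$ and $NM=MN=0$ for two such matrices.

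The only step that is more than mechanical bookkeeping is the verification of abelianness in (b) and (d); this is where one actually computes a commutator rather than counting solutions to linear equations. In both cases the verification is a one-line check that the $(1,4)$-entry of the commutator vanishes after substituting the proportionality relations forced by (a) or (c). Everything else is routine enumeration of free parameters in the system of determinant equations.
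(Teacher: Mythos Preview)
Your proposal is correct and follows exactly the route the paper intends: it invokes Lemma~\ref{lemma:Multiplcation} at positions $(1,3)$, $(2,4)$, $(1,4)$, reads off the linear constraints, counts free parameters, and verifies abelianness by checking the $(1,4)$-entry of the commutator. The paper itself gives no proof beyond ``follows from a matrix computation,'' so your write-up is in fact more detailed than the original while being entirely in the same spirit.
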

\begin{defn}
A group $G$ is called an $AC$-group, if the centralizer of every noncentral
element of $G$ is abelian.
\end{defn}

\begin{lemma}\label{lemma:T1} $T_1$ is an AC group and $\gom(T_1)=q^2+1$.
\end{lemma}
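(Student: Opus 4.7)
The plan is to exploit Observation~\ref{Obs:TOne}, whose parts (a)--(e) already carry out the centralizer computations. For $A \in T_1 \bs Z(T_1)$ there are three mutually exclusive cases: $a_{12}=0$ (handled by (e), which produces the single subgroup $A_0 := \{B \in T_1 : b_{12}=0\}$, abelian of order $q^4$); $a_{12}\neq 0$ and $a_{23}\neq 0$, so $A \in N_1$ (handled by (b), giving $C_{T_1}(A)$ abelian of order $q^3$); and $a_{12}\neq 0, a_{23}=0$, so $A \in N_3$ (handled by (d), giving $C_{T_1}(A)$ abelian of order $q^3$). Since every noncentral element of $T_1$ falls in exactly one of these cases, each such centralizer is abelian, and so $T_1$ is an AC-group.

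Next I would derive $\gom(T_1) = q^2 + 1$ by counting the maximal abelian subgroups. In any AC-group, two distinct maximal abelian subgroups meet precisely in the center, because a noncentral element in their intersection would have an abelian centralizer containing both, forcing them to coincide. Consequently, the maximal abelian subgroups of $T_1$ are exactly the centralizers $C_{T_1}(s)$ of noncentral $s$. Using the explicit descriptions in (a) and (c), the $N_1$-centralizers are parametrised by the line $\mbb{F}_q \cdot (a_{12}, a_{23}, a_{24})$ inside $\mbb{F}_q^3$, subject to $a_{12}, a_{23} \neq 0$, giving $q(q-1)$ distinct subgroups; the $N_3$-centralizers are parametrised by the ratio $a_{24}/a_{12} \in \mbb{F}_q$, giving $q$ further subgroups; and $A_0$ contributes one more. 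By inspecting whether $b_{12}=0$ and whether $b_{23}=0$ can occur, the three families are pairwise disjoint, for a total of $1 + q(q-1) + q = q^2+1$ maximal abelian subgroups.

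The bounds on $\gom(T_1)$ then fall out of the AC-structure. A non-commuting set of size $\geq 2$ contains no central element and at most one element from each maximal abelian subgroup, so $\gom(T_1) \leq q^2+1$. For the matching lower bound, choose one noncentral representative $x_i$ from each of the $q^2+1$ maximal abelian subgroups $M_i$; if $x_i$ commuted with $x_j$ for $i \neq j$, then $x_j \in C_{T_1}(x_i) = M_i$, forcing $x_j \in M_i \cap M_j = Z(T_1)$, which contradicts noncentrality. The main obstacle is really only the enumeration and distinctness check for the three families of maximal abelian subgroups; once that bookkeeping is done, the AC framework converts the count directly into the clique number.
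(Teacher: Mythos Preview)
Your argument is correct and complete. The AC-group verification via the three cases is the same as the paper's, but your computation of $\gom(T_1)$ proceeds differently. You explicitly enumerate the maximal abelian subgroups---one subgroup $A_0$ of order $q^4$, the $q(q-1)$ centralizers arising from $N_1$ parametrised by lines $\mbb{F}_q\cdot(a_{12},a_{23},a_{24})$ with $a_{12}a_{23}\neq 0$, and the $q$ centralizers arising from $N_3$ parametrised by $a_{24}/a_{12}$---and then invoke the general AC-group fact that picking one noncentral representative per maximal abelian subgroup produces a non-commuting set of exactly that size. The paper instead takes a maximal non-commuting set $X=\{x_1,\ldots,x_k\}$, observes that non-extendability forces $T_1=\bigcup_i C_{T_1}(x_i)$ with pairwise intersections equal to $Z(T_1)$, argues that exactly one $x_i$ can have $|C_{T_1}(x_i)|=q^4$, and solves $q^5=(q^4-q^2)+(k-1)(q^3-q^2)+q^2$ for $k=q^2+1$. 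Your route requires the bookkeeping of checking that the three families are internally parametrised as claimed and mutually disjoint, but it makes the structure of the maximal abelian subgroups completely explicit; the paper's counting argument avoids that enumeration at the cost of being less transparent about what the $q^2+1$ subgroups actually are. Both methods are standard for AC-groups and yield the same result with comparable effort.
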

\begin{proof} From Observation~\ref{Obs:TOne}(a),(b),(c),(d) and the structure of the centralizers we get for $s_1,s_2$ such that
$s_1s_2 \neq s_2s_1$ or $C_{T_1}(s_1) \neq C_{T_1}(s_2)$ then
$C_{T_1}(s_1) \cap C_{T_1}(s_2) = Z(T_1)$ and that $T_1$ is an AC
group. The commuting condition is an equivalence relation and
coincides with the centralizer equivalence relation. Let
$X=\{x_1,x_2,\cdots,x_k\}$ be a maximal non-commuting set in $T_1$.
Without loss of generality we can assume that $|C_{T_1}(x_1)|=q^4$.
Suppose for $x_2$, $|C_{T_1}(x_2)|=q^4$. Then
$q^3=|T_1/Z(T_1)|=|T_1/(C_{T_1}(x_1)\cap C_{T_1}(x_2))|\leq
|T_1/(C_{T_1}(x_1)||T_1/C_{T_1}(x_2))|=q^2$, which is impossible.
Hence $x_1$ is a unique element in $X$ such that
$|C_{T_1}(x_1)|=q^4$ and therefore , $C_{T_1}(x_i)=q^3$ for
$i=2,\ldots,k$. Moreover we see that $x_1\in T_1\bs(N_1 \sqcup N_3)$ which is an abelian set. Now
$T_1=\us{i=1}{\os{k}{\bigsqcup}}C_{T_1}(x_i)$. Therefore, we have
\equa{|T_1|
&=&\us{i=1}{\os{k}{\sum}}|(C_{G_4}(x_i)\bs Z(T_1))|+ |Z(T_1)|\\
q^5&=& q^4-q^2 +(k-1)(q^3-q^2)+q^2. } This yields that $k=q^2+1$.
\end{proof}

\begin{remark}\label{Remark:UU4Fq}
In view of the proof of Lemma
\ref{lemma:T1} and Observation~\ref{Obs:TOne}(a), all the elements
in a maximal non-commuting set $X$ of $T_1$ belongs to $N_1
\bigsqcup N_3$ except $x_1$. Therefore, $\gom(N_1 \bigsqcup
N_3)=q^2$ and so $\gom(N_1^{anti}\bigsqcup N_3^{anti})=q^2$. We
observe that the sets $(N_1 \bigsqcup N_3)$ and $(N_1^{anti}
\bigsqcup N_3^{anti})$ do not commute. Thus $\gom((N_1 \bigsqcup
N_3) \bigsqcup (N_1^{anti} \bigsqcup N_3^{anti}))=2q^2$ and
$\gom(T_1\cup T_1^{anti})=2q^2+1$.
\end{remark}
~\\
\subsection{\textbf{Set $N_2=N_2^{anti}$}}
~\\
The set $N_2$ is given by \equ{N_2 = \{\uuFour a \in G_4 \mid
a_{23}=0, a_{12}a_{34} \neq 0\}.}

\begin{obs}\label{Obs:NTwo}
For the set $N_2$, observe the following.
\begin{enumerate}[label=(\alph*)]
\item For $A = (a_{ij})_{4 \times 4} \in N_2,B = (b_{ij})_{4 \times 4} \in
G_4$. If $AB=BA$, then
\equa{b_{23}&=0\\a_{12}b_{24}+a_{13}b_{34}&=b_{12}a_{24}+b_{13}a_{34}.}
\item From Observation~\ref{Obs:NTwo}(a), we have for every $s \in N_2$, $C_{G_4}(s) = C_{T_2}(s)$ and
$|C_{G_4}(s)| = q^4$, where \equ{T_2 = \{A = (a_{ij})_{4 \times 4}
\in G_4\mid a_{23}=0\}.}
\end{enumerate}
\end{obs}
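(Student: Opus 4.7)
The plan is to invoke the matrix multiplication lemma (Lemma~\ref{lemma:Multiplcation}) with $n=4$. For $A \in N_2$ and any $B \in G_4$, the commuting relation $AB=BA$ is equivalent to the vanishing of the three determinant sums indexed by pairs $(i,j)$ with $j-i\geq 2$, namely $(1,3)$, $(1,4)$, and $(2,4)$. The pairs $(1,2),(2,3),(3,4)$ give empty sums and impose no constraint.

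First I would dispose of the three equations one at a time. The $(1,3)$ equation involves only $k=2$ and reduces to $a_{12}b_{23}-a_{23}b_{12}=0$; since $a_{23}=0$ and $a_{12}\neq 0$ by the definition of $N_2$, this forces $b_{23}=0$. The $(2,4)$ equation has only $k=3$, giving $a_{23}b_{34}-a_{34}b_{23}=0$, which is automatically satisfied once $a_{23}=b_{23}=0$. The $(1,4)$ equation sums over $k\in\{2,3\}$ and produces
\equ{(a_{12}b_{24}-a_{24}b_{12})+(a_{13}b_{34}-a_{34}b_{13})=0,}
which rearranges directly to the identity $a_{12}b_{24}+a_{13}b_{34}=b_{12}a_{24}+b_{13}a_{34}$ asserted in part (a). This finishes (a).

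For part (b), the constraint $b_{23}=0$ already says $B\in T_2$, so $C_{G_4}(s)=C_{T_2}(s)$. To count $|C_{G_4}(s)|$, note that a general $B\in G_4$ has six free super-diagonal entries; imposing $b_{23}=0$ leaves the five free entries $b_{12},b_{13},b_{14},b_{24},b_{34}$. The remaining linear condition $a_{12}b_{24}+a_{13}b_{34}=a_{24}b_{12}+a_{34}b_{13}$ has $a_{12}\neq 0$ (and $a_{34}\neq 0$) among its coefficients, so it is a nontrivial affine-linear equation in these five variables; it cuts the solution set by exactly a factor of $q$, leaving $q^4$ elements, with $b_{14}$ remaining entirely free.

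No step is a genuine obstacle; the only point worth flagging is that the non-degeneracy of the linear equation in part (a), and hence the exact count $q^4$, relies on $a_{12}a_{34}\neq 0$, which is precisely the defining hypothesis of $N_2$. Were either of these super-diagonal entries to vanish, one would fall into a different stratum and the centralizer structure would change, which is why the cases $N_1, N_1^{anti}, N_3, N_3^{anti}$ are handled separately in the preceding observations.
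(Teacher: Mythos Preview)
Your proof is correct and follows exactly the approach the paper intends: apply Lemma~\ref{lemma:Multiplcation} with $n=4$, read off the three conditions for $(i,j)\in\{(1,3),(2,4),(1,4)\}$, and use $a_{23}=0$, $a_{12}a_{34}\neq 0$ to simplify. The paper records this as an observation without writing out the details, and your argument fills those in precisely.
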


Now, define a relation $R$ on $N_2$ as follows. We say $y \os{R}{\sim} z$
if $C_{G_4}(y)=C_{G_4}(z)$ for $y,z \in N_2$. It is immediate that
$R$ is an equivalence relation. Moreover all equivalence classes are
abelian.

\begin{lemma}
\label{lemma:TypeN_2} The set \equ{X_{N_2}= \{\uufour
1{x_{13}}00{x_{24}}{x_{34}} \mid x_{34} \in \mbb{F}_q^{*},
x_{13},x_{24} \in \mbb{F}_q\}} is a complete representative set for
the equivalence classes under the relation $R$ on $N_2$ and size  of
$X_{N_2}$ is $q^2(q-1)$.
\end{lemma}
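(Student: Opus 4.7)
The plan is to combine the matrix multiplication Lemma~\ref{lemma:Multiplcation} with a simple proportionality argument on the linear form cutting out the centralizer. First, for $A=(a_{ij})\in N_2$ (so $a_{23}=0$ and $a_{12},a_{34}\in\mbb{F}_q^{*}$) and a generic $B=(b_{ij})\in G_4$, I would write down the commutation conditions at the indices $(1,3)$, $(2,4)$, $(1,4)$: using $a_{12}\neq 0$ the first forces $b_{23}=0$, the second becomes trivial, and the third is the single linear equation
\begin{equation*}
a_{12}b_{24}+a_{13}b_{34}-a_{24}b_{12}-a_{34}b_{13}=0,
\end{equation*}
which is already recorded in Observation~\ref{Obs:NTwo}. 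The crucial observation for the present lemma is that $a_{14}$ does not appear anywhere in these conditions.

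Second, I would deduce that two elements $A,A'\in N_2$ satisfy $C_{G_4}(A)=C_{G_4}(A')$ if and only if the coefficient vectors $(-a_{24},-a_{34},a_{12},a_{13})$ and $(-a'_{24},-a'_{34},a'_{12},a'_{13})$ are proportional, i.e.\ there exists $\gl\in\mbb{F}_q^{*}$ with $a'_{ij}=\gl a_{ij}$ for $(i,j)\in\{(1,2),(1,3),(2,4),(3,4)\}$, while $a_{14}$ is entirely unconstrained within a class. One direction is that equal hyperplanes in the nondegenerate regime have proportional defining equations; the other is immediate. This describes each $R$-class explicitly as a set of $q(q-1)$ elements ($q-1$ common scalings of the four entries, times $q$ choices of $a_{14}$), and so $|N_2/R|=|N_2|/(q(q-1))=(q-1)^{2}q^{3}/(q(q-1))=q^{2}(q-1)$.

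Third, to identify $X_{N_2}$ as a complete set of representatives, I would normalize each class by choosing $\gl=a_{12}^{-1}$ and setting $a_{14}=0$: every $A\in N_2$ is $R$-equivalent to the unique element of $X_{N_2}$ whose super-diagonal entries are $(1,a_{13}/a_{12},0,0,a_{24}/a_{12},a_{34}/a_{12})$ in slots $(1,2),(1,3),(1,4),(2,3),(2,4),(3,4)$; membership in $X_{N_2}$ is preserved because $a_{34}/a_{12}\in\mbb{F}_q^{*}$. Uniqueness within $X_{N_2}$ holds because if two normalized elements are equivalent, the condition $a_{12}=a'_{12}=1$ forces $\gl=1$, after which $a_{14}=a'_{14}=0$ kills the remaining freedom. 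The cardinality $|X_{N_2}|=q\cdot q\cdot(q-1)=q^{2}(q-1)$ then matches the count of equivalence classes.

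The only real conceptual point, and thus the main (but mild) obstacle, is recognizing that the centralizer condition is governed by only four of the six super-diagonal entries of $A$ and depends on these four only up to a common nonzero scalar; the freedom in $a_{14}$ combined with the scalar action is precisely what produces classes of size $q(q-1)$ and justifies the two normalizations $a_{12}=1$ and $a_{14}=0$ defining $X_{N_2}$. Everything else is routine verification from Lemma~\ref{lemma:Multiplcation} and direct counting.
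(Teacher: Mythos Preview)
Your proposal is correct and follows essentially the same approach as the paper: both arguments identify the centralizer condition as the vanishing of a single nonzero linear form in $(b_{12},b_{13},b_{24},b_{34})$ whose coefficient vector $(a_{12},a_{13},-a_{24},-a_{34})$ determines the $R$-class up to a scalar in $\mbb{F}_q^{*}$, with $a_{14}$ free, and then normalize by $a_{12}=1$ (and $a_{14}=0$) to land in $X_{N_2}$. Your write-up is in fact slightly more explicit than the paper's about the $a_{14}=0$ normalization and the uniqueness check, but the route is the same.
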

\begin{proof}
Let $A,C \in N_2$. From Observation~\ref{Obs:NTwo}(a), we get
$C_{T_2}(A)=C_{T_2}(C)$ if and only if \equ{
\gd(a_{12},a_{13},-a_{24},-a_{34}) =
(c_{12},c_{13},-c_{24},-c_{34})} for some $\gd \in\mbb{F}_q^{*}$. So
$C$ is of the form \equ{C=\uufour {\gd a_{12}}{\gd
a_{13}}{c_{14}}0{\gd a_{24}}{\gd a_{34}}.}
By choosing a suitable $\gd$ in each equivalence class so that
$c_{12}=1$, the Lemma~\ref{lemma:TypeN_2} follows.
Since cardinality of each equivalence class is
$q(q-1)$, the number of equivalence classes is $q^2(q-1)$.
\end{proof}

\begin{lemma}
\label{lemma:N2ConfBij}
There exists a bijection $\gc$ between the equivalence classes of $N_2$ under $R$ and the set $\mcl{M}$ of ordered $3$-tuples in $\mbb{F}_q \times
\mbb{F}_q^{*} \times \mbb{F}_q$. Indeed each equivalence has a unique matrix representative in $X_{N_2}$ as in Lemma~\ref{lemma:TypeN_2}. The bijection
$\gc$ sends the matrix $\uufour
1{x_{13}}00{x_{24}}{x_{34}}$ to the $3$-tuple $(x_{13},x_{34},x_{24}) \in \mcl{M}$. Moreover, if $x,y \in X_{N_2}$ representing equivalence classes commute
if and only if $\gs(x),\gs(y)$ satisfy the relation $CC$.
\end{lemma}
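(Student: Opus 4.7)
The plan is to leverage Lemma~\ref{lemma:TypeN_2}, which already identifies $X_{N_2}$ as a complete set of representatives for the $R$-equivalence classes of $N_2$, and then simply rewrite the commutation condition of Observation~\ref{Obs:NTwo}(a) as the determinant identity defining $\mcl{M}$.

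First, I would define the candidate map $\gc$ on $X_{N_2}$ by sending the representative
\equ{A=\uufour 1{x_{13}}00{x_{24}}{x_{34}} \longmapsto (x_{13},x_{34},x_{24}).}
Since membership in $X_{N_2}$ forces $x_{34}\in\mbb{F}_q^\ast$ while $x_{13},x_{24}\in\mbb{F}_q$ are free, the image lies in $\mbb{F}_q\times\mbb{F}_q^\ast\times\mbb{F}_q=\mcl{M}$, and the map is evidently injective and surjective onto $\mcl{M}$, so by Lemma~\ref{lemma:TypeN_2} it descends to a well-defined bijection from $N_2/R$ to $\mcl{M}$.

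Next, for the commutativity assertion, I would specialize Observation~\ref{Obs:NTwo}(a) to two representatives
\equ{A=\uufour 1{a_{13}}00{a_{24}}{a_{34}},\qquad B=\uufour 1{b_{13}}00{b_{24}}{b_{34}}}
of $X_{N_2}$. The $a_{23}=b_{23}=0$ condition is automatic, and the remaining condition $a_{12}b_{24}+a_{13}b_{34}=b_{12}a_{24}+b_{13}a_{34}$ simplifies, using $a_{12}=b_{12}=1$, to
\equ{a_{13}b_{34}-b_{13}a_{34}=a_{24}-b_{24},}
which is precisely $\Det\mattwo{a_{13}}{a_{34}}{b_{13}}{b_{34}}=a_{24}-b_{24}$, i.e. the defining commuting relation of $\mcl{M}$ in Definition~\ref{defn:Config} applied to $\gc(A)$ and $\gc(B)$.

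I do not expect any genuine obstacle: the work is essentially a translation between two encodings of the same data, with Lemma~\ref{lemma:TypeN_2} providing the bijection on underlying sets and Observation~\ref{Obs:NTwo}(a) providing the matching of relations. The only mild point of care is to notice that the commuting relation on $N_2$ descends unambiguously to $N_2/R$ (so that it makes sense to ask whether two classes commute via their representatives), which is immediate from the fact that $R$-equivalent elements share the same centralizer in $G_4$ and hence commute with exactly the same elements of $N_2$.
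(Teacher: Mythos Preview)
Your proposal is correct and follows essentially the same approach as the paper: both define $\gc$ on the representative set $X_{N_2}$ from Lemma~\ref{lemma:TypeN_2}, then specialize the commutation condition of Observation~\ref{Obs:NTwo}(a) with $a_{12}=b_{12}=1$ to obtain the determinant relation defining $\mcl{M}$. Your write-up is in fact more careful than the paper's, in particular your remark that commutation descends well to $N_2/R$ is a point the paper leaves implicit.
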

\begin{proof}
The commutativity condition between $x=(x_{ij})_{4\times 4}$ and
$y=(y_{ij})_{4\times 4} \in X_{N_2}$ gives the following condition
\equ{y_{24}+x_{13}y_{34}=x_{24}+y_{13}x_{34}.} Now, consider the
bijection $\gc:X_{N_2} \lra \mcl{M}$ given by
\equ{\gc(x_{13},x_{34},x_{24})=(x,y,z).}  This bijection preserves
commutativity. Hence, the Lemma~\ref{lemma:N2ConfBij} follows.
\end{proof}
\section{The size of a maximal non-commuting set in $UU_4(\mbb{F}_q)$}
\label{sec:NonCommutingSizeUU4Fq} In this section, we determine
$\gom(UU_4(\mbb{F}_q))$ in terms of the non-commuting structure $\gom(\mcl{M})$.
In the coming sections, we give a lower bound of $\gom(\mcl{M})$.
Here, we start with the following lemma.
\begin{lemma}
\label{lemma:NonCommutingSizeUU4minusT4}
Let $T_4 = \{\uufour 0{a_{13}}{a_{14}}{a_{23}}{a_{24}}0 \mid
a_{13},a_{14},a_{23},a_{24} \in \mbb{F}_q\}$. Then
$\gom(UU_4(\mbb{F}_q)\bs T_4)=q^3+q+\gom(\mcl{M})$.
\end{lemma}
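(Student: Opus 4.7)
The plan is to stratify $UU_4(\mbb{F}_q) \setminus T_4$ by the vanishing pattern of the super-diagonal entries $(a_{12}, a_{23}, a_{34})$. Since $T_4$ consists of matrices with $a_{12} = a_{34} = 0$, the complement decomposes as
\[
UU_4(\mbb{F}_q) \setminus T_4 \;=\; N_0 \;\sqcup\; (N_1 \sqcup N_3) \;\sqcup\; (N_1^{anti} \sqcup N_3^{anti}) \;\sqcup\; N_2,
\]
whose four blocks have been analyzed individually in Section~\ref{sec:UU4Fq}. For the upper bound I apply the general sub-additivity $\gom(A \cup B) \leq \gom(A) + \gom(B)$ (any non-commuting subset of a union restricts to non-commuting subsets of each piece) and combine Lemma~\ref{NonCommutingSizeN0} giving $\gom(N_0) = q(q-1)^2$, Lemma~\ref{lemma:T1} with Remark~\ref{Remark:UU4Fq} giving $\gom(N_1 \sqcup N_3) = \gom(N_1^{anti} \sqcup N_3^{anti}) = q^2$, and Lemma~\ref{lemma:N2ConfBij} together with Theorem~\ref{theorem:CentRelation} giving $\gom(N_2) = \gom(\mcl{M})$. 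Summing yields
\[
\gom(UU_4 \setminus T_4) \;\leq\; q(q-1)^2 + 2q^2 + \gom(\mcl{M}) \;=\; q^3 + q + \gom(\mcl{M}).
\]

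For the matching lower bound I would construct a non-commuting set of this size by concatenating maximum non-commuting sets in the four blocks. Using the determinant sums of Lemma~\ref{lemma:Multiplcation}, a direct inspection shows that $N_0$ is non-commuting with every other block (via the $(1,3)$ or $(2,4)$-sum, which contains a single surviving term that cannot vanish), and that $N_1$ (resp.~$N_1^{anti}$) is non-commuting with everything outside $T_1$ (resp.~$T_1^{anti}$). The three cross-pairs still requiring a compatibility check are $N_3 \leftrightarrow N_3^{anti}$, $N_3 \leftrightarrow N_2$, and $N_3^{anti} \leftrightarrow N_2$, each governed by a single linear equation in the $(1,4)$-sum. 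Inside $N_3$ the $q$ commuting classes are indexed by $a_{24}/a_{12}$, but within each such class there is an entire $\mbb{F}_q$-family of centralizer classes indexed by the auxiliary parameter $a_{13}/a_{12}$, which does not affect commuting within $N_3$ but does translate the hyperplane of $N_2$-elements that commute with a chosen representative. I would exploit this freedom, together with the symmetric $b_{24}/b_{34}$-freedom inside $N_3^{anti}$, to pick the representatives so that $a_{12}b_{24} + a_{13}b_{34} \neq 0$ on every cross-pair in $N_3 \times N_3^{anti}$ and so that the forbidden hyperplanes in $N_2$ miss a pre-chosen maximum non-commuting set in $N_2$ corresponding via the bijection $\gc$ of Lemma~\ref{lemma:N2ConfBij} to a maximum non-commuting set in $\mcl{M}$.

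The main obstacle is precisely this last compatibility step. Each chosen $A \in N_3$ commutes with a codimension-one affine subvariety of $N_2$, and naively the union of these subvarieties across the $q$ representatives could intersect the target maximum non-commuting set in $\mcl{M}$. The plan is to use the $a_{13}$- and $b_{24}$-parameters (which contribute nothing to $\gom(N_3)$ or $\gom(N_3^{anti})$) as translation freedoms and verify by a linear-algebra/counting argument that they can be simultaneously tuned to avoid commuting with any element of the fixed $\mcl{M}$-set. Establishing this combined compatibility is the technical heart of the lower bound.
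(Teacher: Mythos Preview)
Your overall strategy --- the four-block partition
\[
UU_4(\mbb{F}_q)\setminus T_4 \;=\; N_0 \sqcup (N_1\sqcup N_3)\sqcup (N_1^{anti}\sqcup N_3^{anti})\sqcup N_2
\]
together with the individually computed values of $\gom$ --- is exactly the paper's approach. The difference is that the paper simply \emph{asserts} this is a non-commuting decomposition in the sense of Equation~\eqref{Eq:Parts} and reads off the equality in one line, whereas you stop to worry about the lower bound. You are right to do so: the paper's assertion is false. For instance $I+E_{12}\in N_3$ commutes with $I+E_{34}\in N_3^{anti}$ (both $E_{12}E_{34}$ and $E_{34}E_{12}$ vanish), and more generally for any pair in $N_3\times N_3^{anti}$, $N_3\times N_2$, or $N_3^{anti}\times N_2$ the $(1,3)$- and $(2,4)$-commutator conditions hold automatically, leaving only the single $(1,4)$-equation $a_{12}b_{24}+a_{13}b_{34}=b_{12}a_{24}+b_{13}a_{34}$, which is certainly solvable. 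So the paper's proof of the lower bound has a genuine gap precisely where you flagged one; the upper bound (your sub-additivity argument) is fine for both of you.

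Your proposed repair via the free parameters $a_{13}$ on the $N_3$-side and $b_{24}$ on the $N_3^{anti}$-side is the natural idea, and it dispatches the $N_3\leftrightarrow N_3^{anti}$ issue immediately (normalising $a_{12}=b_{34}=1$, the condition is $a_{13}+b_{24}\neq 0$, so take all $a_{13}=0$ and all $b_{24}=1$). The $N_3\leftrightarrow N_2$ case, however, is not obviously salvageable by counting alone: with $a_{12}=b_{12}=1$ the $\mu$-th $N_3$-representative $(1,a_{13}^{(\mu)},*,\mu,*,0)$ commutes with $(x,y,z)\in X_{N_2}\cong\mcl{M}$ exactly on the affine plane $\{z=\mu-a_{13}^{(\mu)}y\}$, and each point of a pre-chosen maximal non-commuting $S\subset\mcl{M}$ forbids exactly one value of $a_{13}^{(\mu)}$. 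A pigeonhole choice of $a_{13}^{(\mu)}$ therefore requires $|S|=\gom(\mcl{M})<q$, which is not known (Theorem~\ref{theorem:NonCommutingsizeBound} only gives $\gom(\mcl{M})\le q(q-1)$). So your plan, as stated, has not closed the gap either; a more structural argument --- choosing $S$ and the parameters $a_{13}^{(\mu)},b_{24}^{(\nu)}$ simultaneously, or regrouping the blocks so that $N_3,N_3^{anti}$ are analysed together with $N_2$ inside $T_2$ --- appears to be needed.
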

\begin{proof}
First we observe that $T_4$ is an abelian subgroup of order $q^4$.
The following is a non-commuting decomposition of
$UU_4(\mbb{F}_q)\bs T_4$ given by \equ{UU_4(\mbb{F}_q)\bs T_4=N_0
\bigsqcup (N_1 \sqcup N_3) \bigsqcup (N_1^{anti} \sqcup N_3^{anti})
\bigsqcup N_2.} Hence $\gom(UU_4(\mbb{F}_q)\bs T_4) = (q-1)^2q +
2q^2 + \gom(\mcl{M}) = q^3+q+\gom(\mcl{M})$.
\end{proof}
Next, we prove the Theorem \ref{theorem:NonCommutingSizeUU4} by using Lemma~\ref{Lemma:NonCommutingSetInequality}.
\begin{proof}[Proof of Theorem \ref{theorem:NonCommutingSizeUU4}]
Any maximal
non-commuting set in $UU_4(\mbb{F}_q)$ cannot contain more than one
element from the set $T_4$ which is defined in the
Lemma~\ref{lemma:NonCommutingSizeUU4minusT4}. So
$\gom(UU_4(\mbb{F}_q))-1 \leq \gom(UU_4(\mbb{F}_q)\bs T_4) \leq
\gom(UU_4(\mbb{F}_q))$. In the non-commuting subset that we have
just produced in Remark~\ref{Remark:UU4Fq} for $UU_4(\mbb{F}_q)$, it
contains one additional element coming from the set $T_4$ other than
$\gom(UU_4(\mbb{F}_q)\bs T_4)=q^3+q+\gom(\mcl{M})$ non-commuting
elements from the set $UU_4(\mbb{F}_q)\bs T_4$. Hence the 
Theorem~\ref{theorem:NonCommutingSizeUU4} follows.
\end{proof}
\section{Non-commuting structure $\mcl{M}$}
\label{sec:confiM}

In this section, we provide a lower and a upper bound of $\gom(\mcl{M})$.
Let \equ{C(x,y,z)= \{(x_1,y_1,z_1) \in \mcl{M} \mid
x_1y-y_1x=z_1-z\}} denote the centralizer of $(x,y,z) \in \mcl{M}$.
\begin{lemma}[Disjoint decomposition into centralizers]
\equ{\mcl{M}=\bigsqcup_{m\in \mathbb{F}_{q}} C(m,1,0)} is a
disjoint decomposition into centralizers each of size $q(q-1)$.
\end{lemma}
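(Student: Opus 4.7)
The plan is to verify three elementary claims: (i) each $C(m,1,0)$ has cardinality $q(q-1)$; (ii) the sets $C(m,1,0)$ for distinct $m \in \mbb{F}_q$ are pairwise disjoint; (iii) their union is all of $\mcl{M}$.

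First I would unwind the defining relation. By Definition~\ref{defn:Config}, a triple $(x_1,y_1,z_1) \in \mcl{M}$ commutes with $(m,1,0)$ exactly when
\equ{\Det\mattwo{x_1}{y_1}{m}{1}=z_1-0,}
i.e.\ $x_1 - m y_1 = z_1$. Hence $C(m,1,0) = \{(x_1,y_1,z_1)\in \mcl{M} \mid z_1 = x_1 - m y_1\}$. Since $x_1$ ranges freely over $\mbb{F}_q$, $y_1$ ranges over $\mbb{F}_q^{*}$, and $z_1$ is then determined, the cardinality is $q(q-1)$, proving (i).

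For (ii), suppose $(x_1,y_1,z_1) \in C(m,1,0) \cap C(m',1,0)$. Subtracting the two equations $z_1 = x_1 - m y_1$ and $z_1 = x_1 - m' y_1$ gives $(m-m')y_1 = 0$. Because the middle coordinate of any element of $\mcl{M}$ lies in $\mbb{F}_q^{*}$, we have $y_1 \neq 0$, forcing $m = m'$. For (iii), given an arbitrary $(x_1,y_1,z_1) \in \mcl{M}$, the element $m := (x_1 - z_1)/y_1$ is well-defined in $\mbb{F}_q$ precisely because $y_1 \in \mbb{F}_q^{*}$, and by construction $(x_1,y_1,z_1) \in C(m,1,0)$. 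Thus every triple lies in a (necessarily unique) piece.

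There is no real obstacle here; the content of the lemma is the observation that the nonvanishing of the middle coordinate is exactly what makes the centralizer relation into an equivalence-like partition indexed by $\mbb{F}_q$. The only thing worth flagging is that had we worked with $\mcl{Q}$ instead of $\mcl{M}$ (where $y \in \mbb{F}_q$ is allowed to vanish), the division $m=(x_1-z_1)/y_1$ would break down, and the analogous statement would fail; so the argument genuinely uses $\mcl{M}$ rather than $\mcl{Q}$.
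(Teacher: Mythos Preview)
Your proof is correct and follows essentially the same approach as the paper: both identify $C(m,1,0)$ as the set of triples with $z_1 = x_1 - m y_1$ (the paper writes this in the parametrized form $\{(my+z,y,z)\mid y\in\mbb{F}_q^{*},\,z\in\mbb{F}_q\}$) and use $y_1\neq 0$ to conclude disjointness and the counting. You supply more detail on the covering step (iii), which the paper leaves implicit via cardinality, but the argument is the same.
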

\begin{proof}
First we observe that $C(m,1,0)=\{(my+z,y,z)\mid y\in \mbb{F}_q^{*},
z\in \mbb{F}_q\}$ and $C(m_1,1,0)\cap C(m_2,1,0)= \es$ for $m_1\neq
m_2.$ The size of the set $C(m,1,0)$ is $q(q-1)$.
\end{proof}

The set $C(m,1,0)$ can be considered as a $(q-1)\times q$ matrix
$C(my+z,y,z)$ with $(i,j)^{th}$ element entries are given by $(m*i+j,i,j)$, where $i\in \mbb{F}_q^{*}, j\in \mbb{F}_q$.\\

\begin{lemma}\label{lemma:configure}
For  a fixed $m\in \mathbb{F}_q$, $\gom(C(m,1,0))= q+1$.
\end{lemma}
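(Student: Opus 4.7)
The plan is to reduce the commuting relation inside the centralizer $C(m,1,0)$ to $\mbb{F}_q$-linear dependence of vectors in $\mbb{F}_q^2$, and then count projective lines. First I would write
\equ{C(m,1,0)=\{(my+z,y,z)\mid y\in\mbb{F}_q^{*},\,z\in\mbb{F}_q\}}
and substitute two such elements into the commuting equation $x_1y_2-y_1x_2=z_1-z_2$ from Definition~\ref{defn:Config}. The $my_1y_2$ terms cancel, so the condition simplifies to
\equ{z_1(y_2-1)=z_2(y_1-1);}
in particular the commuting relation within $C(m,1,0)$ does not depend on $m$. The substitution $u=y-1$, $v=z$ then identifies $C(m,1,0)$ bijectively with $\Gom:=\{(u,v)\in\mbb{F}_q^2\mid u\neq -1\}$, and the commuting condition becomes $v_1u_2=v_2u_1$, i.e.\ the vectors $(u_1,v_1)$ and $(u_2,v_2)$ are $\mbb{F}_q$-linearly dependent.

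For the upper bound, the zero vector $(0,0)\in\Gom$ (which corresponds to the element $(m,1,0)$ itself) is linearly dependent with every vector, so any non-commuting subset containing it has size $1$. A non-commuting subset avoiding $(0,0)$ admits at most one non-zero representative from each line through the origin of $\mbb{F}_q^2$, and there are exactly $|\mbb{P}^1(\mbb{F}_q)|=q+1$ such lines, giving $\gom(C(m,1,0))\leq q+1$.

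For the matching lower bound I would pick any $t_0\in\mbb{F}_q\setminus\{0,-1\}$ (non-empty for $q\geq 3$) and take the $q+1$ points $(0,1)$ together with $(t_0,ct_0)$ for $c\in\mbb{F}_q$; all lie in $\Gom$ since $u\in\{0,t_0\}$ and neither equals $-1$, and any two are linearly independent (two of the form $(t_0,ct_0),(t_0,c't_0)$ have determinant $\pm t_0^{2}(c-c')\neq 0$ when $c\neq c'$, while $(0,1)$ paired with $(t_0,ct_0)$ has determinant $-t_0\neq 0$). Hence they represent $q+1$ distinct lines through the origin, giving $\gom(C(m,1,0))\geq q+1$. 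I do not anticipate a serious obstacle here: the main idea is the cancellation that trivialises the $m$-dependence, after which the bound is simply the standard count of projective lines in $\mbb{F}_q^2$.
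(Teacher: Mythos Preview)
Your proof is correct and takes a cleaner, more conceptual route than the paper. The paper establishes the lower bound by exhibiting the explicit non-commuting set $\{(my+z,y,z)\mid z\in\mbb{F}_q\}\cup\{(m+1,1,1)\}$ for a fixed $y\neq 1$, and the upper bound by covering $C(m,1,0)$ with $q+1$ explicit abelian subsets (the row $y=1$, the column $z=0$, and $q-1$ further curves $\{((rz+1)m+z,\,rz+1,\,z)\}$ indexed by $r\in\mbb{F}_q^{*}$). You instead observe the same cancellation the paper later records in Lemma~\ref{lemma:noncommutingsize}(5) --- that the commuting relation inside $C(m,1,0)$ reduces to $z_1(y_2-1)=z_2(y_1-1)$ --- and then make the further substitution $u=y-1$ to identify this relation with linear dependence in $\mbb{F}_q^2$, after which both bounds are simply the count $|\mbb{P}^1(\mbb{F}_q)|=q+1$. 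Your argument is shorter and explains structurally why the answer is $q+1$; the paper's abelian cover is more in keeping with the ``abelian decomposition'' philosophy it uses elsewhere, and in fact the $q+1$ abelian pieces in the paper's cover are precisely (the intersections with $\Gom$ of) the lines through the origin in your picture. Both arguments tacitly require $q\geq 3$ (you need $t_0\notin\{0,-1\}$, the paper needs $y\neq 1$ in $\mbb{F}_q^{*}$); for $q=2$ one has $|C(m,1,0)|=2$ and the set is abelian, so the statement fails as written.
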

\begin{proof}
Fix $y\neq 1\in \mathbb{F}_q^{*}$. Then, $\{(my + z, y, z) ~|~ z \in \mathbb{F}_q \}\bigsqcup \{(m + 1, 1, 1)\}$ is a non-commuting
set of $q + 1$ elements. This shows that
\equ{\gom(C(m,1,0)) \geq q+1} i.e. bounded below by $q+1$.

Next, we prove that $\gom(C(m,1,0)) \leq q+1$. For this purpose,
consider the decomposition into abelian sets given by \equa{C(m,1,0)
&= \{(m+z,1,z) \mid z \in \mbb{F}_q\}\\
&\bigcup \{(rm,r,0) \mid r \in \mbb{F}_q^*\}\\
&\us{r\in \mbb{F}_q^*}{\bigcup} \{((rz+1)m+z,(rz+1),z) \mid
-\frac{1}{r} \neq z \in \mbb{F}_q\}.}

The above need not be a completely disjoint decomposition. However
we observe that each decomposed part is abelian i.e.
\begin{enumerate}[label=(\alph*)]
\item $\{(m+z,1,z) \mid z \in \mbb{F}_q\}  \text{ is abelian}.$
\item $\{(rm,r,0) \mid r \in \mbb{F}_q^*\} \text{ is abelian}.$
\item For any $r \in \mbb{F}_q^*$, $\{((rz+1)m + z,(rz+1),z) \mid -\frac{1}{r} \neq z \in \mbb{F}_q\} \text{ is abelian}.$
\end{enumerate}
Moreover $C(m,1,0)$ is the union of these abelian decomposed
parts.

This completes the proof of the Lemma~\ref{lemma:configure}.
\end{proof}
In the next lemma, we provide a non-commuting set of size $2q$ in the non-commuting structure $\mcl{M}$.
\begin{lemma}
\label{lemma:configureM} Let $q \neq 2$. For the non-commuting structure $\mcl{M}$,
$\gom(\mcl{M}) \geq 2q$.

\end{lemma}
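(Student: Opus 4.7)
The plan is to exhibit an explicit pairwise non-commuting subset of $\mcl{M}$ of size $2q$. I begin by analyzing, for a fixed $y_0 \in \mbb{F}_q^*$, the slice $\Gl_{y_0} = \{(x, y_0, z) : x, z \in \mbb{F}_q\}$. The commuting condition between two elements of $\Gl_{y_0}$ simplifies to $y_0 x_1 - z_1 = y_0 x_2 - z_2$, i.e., to equality of the linear form $y_0 x - z$. Hence a pairwise non-commuting subset of $\Gl_{y_0}$ is exactly a set on which $y_0 x - z$ takes pairwise distinct values, and the maximum such size is $q$.

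Next, I would glue two slices together. Since $q \neq 2$, one may pick $y_0 \in \mbb{F}_q^* \setminus \{1\}$, and the cross-slice commuting condition between $(x_1, 1, z_1) \in \Gl_1$ and $(x_2, y_0, z_2) \in \Gl_{y_0}$ reads $x_1 y_0 - z_1 = x_2 - z_2$. I then construct two sets of $q$ elements each by imposing dual constraints: let $A \subs \Gl_1$ consist of the $q$ elements on which $y_0 x - z$ equals a chosen constant $c_1$ while $x - z$ runs through all of $\mbb{F}_q$; dually, let $B \subs \Gl_{y_0}$ consist of the $q$ elements on which $x - z$ equals a chosen constant $c_2 \neq c_1$ while $y_0 x - z$ runs through all of $\mbb{F}_q$. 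Each set is cut out by a $2 \times 2$ affine system in $(x, z)$ with determinant a nonzero multiple of $y_0 - 1$, so both are well-defined with exactly $q$ elements; explicitly $A = \{((c_1-r)/(y_0-1),\, 1,\, (c_1 - r y_0)/(y_0-1)) : r \in \mbb{F}_q\}$ and similarly for $B$.

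I would then verify that $A \cup B$ is pairwise non-commuting. The union has size $2q$ because the two sets live in different $y$-slices. Within $A$, pairwise non-commutativity follows from the distinct values of $x - z$; within $B$, from the distinct values of $y_0 x - z$; and across the two sets, the cross commuting equation $x_1 y_0 - z_1 = x_2 - z_2$ collapses to $c_1 = c_2$, which fails by choice. This gives $\gom(\mcl{M}) \geq 2q$.

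The main obstacle, and the only creative step, is identifying the right pair of dual linear forms $x - z$ and $y_0 x - z$ in $(x, z)$, so that on each slice one form is constant (to enforce separation between $A$ and $B$) while the other varies (to produce the $q$-element non-commuting subset inside the slice). This duality requires $y_0 - 1 \neq 0$, which in turn needs $|\mbb{F}_q^* \setminus \{1\}| \geq 1$, i.e., $q \geq 3$; this is precisely the hypothesis $q \neq 2$.
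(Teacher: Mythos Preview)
Your proof is correct and shares the same underlying idea as the paper's: both build a $2q$-element non-commuting set as the union of two $y$-slices $\Gl_1$ and $\Gl_{y_0}$ with $y_0\neq 1$, exploiting that intra-slice commuting is governed by the linear form $y\cdot x - z$ while the cross-slice condition reads $y_0 x_1 - z_1 = x_2 - z_2$. The difference is in execution. The paper writes down a specific set
\[
\{(z, y_0, z) : z \in \mbb{F}_q\} \ \sqcup\ \{(m + 1, 1, 1) : m \neq y_0^{-1}- 1\} \ \sqcup\ \{(y_0^{-1} -1, 1, 0)\},
\]
where the $\Gl_{y_0}$-piece happens to satisfy $x-z=0$, so the cross condition forces $y_0 x_1 - z_1 = 0$ on the $\Gl_1$-piece; the paper then avoids this one bad value by removing a point and patching in another, yielding a set that is \emph{not} a union of two affine lines. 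Your version is more symmetric and conceptual: you isolate the dual pair of forms $x-z$ and $y_0 x - z$, fix one to a constant on each slice (with distinct constants $c_1\neq c_2$), and let the other run freely, obtaining two genuine affine lines in $\mbb{F}_q^3$. This buys a cleaner verification with no ad~hoc patch, and makes transparent exactly where $y_0\neq 1$ (hence $q\neq 2$) is used, namely the invertibility of the $2\times 2$ system with determinant $y_0-1$.
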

\begin{proof}
Fix $y\neq 1\in \mathbb{F}_q^{*}$. Then the set
$$\{(z, y, z) ~|~ z \in \mathbb{F}_q\} \bigsqcup  \{(m + 1, 1, 1) ~|~ m \neq y^{-1}- 1\} \bigsqcup  \{(y^{-1} -1, 1, 0)\}$$
is a non-commuting set of $2q$ elements. Hence we get $\gom(\mcl{M}) \geq 2q$.
\end{proof}
\begin{lemma}
\label{lemma:noncommutingsize}
\begin{enumerate}
\item For a fixed $a,m,c$ with
$am \neq 0$, the set $\{(mx+c,a,amx) \in \mcl{M} \mid x \in
\mbb{F}_q\}$ is a non-extendable abelian set in $\mcl{M}$ and its
size is $q$.
\item If $(x_1,y_1,z_1) \in \mcl{M}$, then $C(x_1,y_1,z_1)=\{(x,y,z)\in \mcl{M}\mid x_1y-xy_1 =
z_1-z\}$ is of size $q(q-1)$.
\item If $x_1y_1 \neq 0$, then $\gom(C(x_1,y_1,z_1)) =
q+1$.
\item If $x_1=0$, then also we have $\gom(C(0,y_1,z_1)) = \gom(C(0,1,0)) = q+1$.
\item For any $m \in \mbb{F}_q$, the non-commuting substructure $C(m,1,0) \subs \mcl{M}$ is isomorphic to the
non-commuting structure $\mcl{N}$, where \equa{Set:&\mcl{N}=\{(x,y)\in \mbb{F}_q\times \mbb{F}_q^{*}\}\\
CC:&\Det\mattwo {x_1}{y_1}{x_2}{y_2}=x_1-x_2.}
\item $\gom(\mcl{N}) = q+1$.
\end{enumerate}
\end{lemma}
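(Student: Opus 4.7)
My plan handles the six parts in order, with direct calculations for parts (1) and (2), a single family of commutativity-preserving bijections of $\mcl{M}$ for parts (3) and (4), and an explicit coordinate bijection plus the preceding Lemma \ref{lemma:configure} for parts (5) and (6).

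For part (1), abelianness follows from the one-line calculation $\Det\mattwo{mx_1+c}{a}{mx_2+c}{a}=am(x_1-x_2)=amx_1-amx_2$, and distinctness of the $q$ triples is immediate from $m\neq 0$. For non-extendability, I would assume a triple $(x_0,y_0,z_0)\in\mcl{M}$ commutes with every $(mx+c,a,amx)$; the resulting identity must hold for all $x\in\mbb{F}_q$, which (using $m\neq 0$) forces $y_0=a$ and $z_0=a(x_0-c)$, and any such triple already lies in the set via $x=(x_0-c)/m$. Part (2) is a one-line count: the defining equation $x_1y-xy_1=z_1-z$ determines $z$ uniquely from $(x,y)\in\mbb{F}_q\times\mbb{F}_q^*$, giving size $q(q-1)$.

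For parts (3) and (4), I would introduce the family
\[
\Phi_{s,\gl,c}:(x,y,z)\longmapsto(x+sy,\gl y,\gl z+c),\qquad s,c\in\mbb{F}_q,\ \gl\in\mbb{F}_q^*,
\]
and verify by a short calculation that each $\Phi_{s,\gl,c}$ is a bijection of $\mcl{M}$ preserving the commuting relation (the $sy_1y_2-sy_2y_1$ and $\pm c$ contributions cancel, and the overall $\gl$ can be divided out). Given any $(x_1,y_1,z_1)\in\mcl{M}$ and any target $m\in\mbb{F}_q$, setting $\gl=y_1^{-1}$, $c=-z_1/y_1$ and $s=(m-x_1)/y_1$ sends $(x_1,y_1,z_1)$ to $(m,1,0)$. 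Since $\Phi_{s,\gl,c}$ preserves commuting, it restricts to a structure-preserving bijection $C(x_1,y_1,z_1)\to C(m,1,0)$, so $\gom(C(x_1,y_1,z_1))=\gom(C(m,1,0))=q+1$ by Lemma \ref{lemma:configure}. This handles (3) and (4) uniformly, since only $y_1\neq 0$ (which always holds in $\mcl{M}$) enters the argument.

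For parts (5) and (6), define $\gc:C(m,1,0)\to\mcl{N}$ by $\gc(my+z,y,z)=(z,y)$, which is plainly a bijection. Writing out the CC in $\mcl{M}$ between two elements of $C(m,1,0)$, the $my_1y_2$ cross terms cancel and the condition collapses to $z_1y_2-z_2y_1=z_1-z_2$, which is exactly the CC of $\mcl{N}$; hence $\gc$ is an isomorphism of non-commuting structures, proving (5). Part (6) is then immediate: $\gom(\mcl{N})=\gom(C(0,1,0))=q+1$ by Lemma \ref{lemma:configure}. The main obstacle is identifying the correct symmetry family $\Phi_{s,\gl,c}$; once this is in hand, every remaining item is either a short computation or an immediate corollary, and no serious difficulty remains.
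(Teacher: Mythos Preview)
Your proof is correct. Parts (1), (2), (5), and (6) match the paper's argument almost verbatim: the paper also shows non-extendability in (1) by letting $(\alpha,\beta,\gamma)$ commute with every element and deducing $\beta=a$, $\gamma=a(\alpha-c)$; counts (2) exactly as you do; and uses the same bijection $(my+z,y,z)\mapsto(z,y)$ for (5), then appeals to Lemma~\ref{lemma:configure} for (6).

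For (3) and (4), however, you take a genuinely different and cleaner route. The paper treats the two cases separately with two ad hoc maps: for $x_1y_1\neq 0$ it uses $(x,y,z)\mapsto(x/y_1,\,my/x_1,\,z_1+z)$ with $m=x_1y_1$ as a bijection $C(m,1,0)\to C(x_1,y_1,z_1)$, which requires division by $x_1$ and hence forces the case split; for $x_1=0$ it uses a different map $(x,y,x)\mapsto(x,\,y_1y,\,xy_1+z_1)$ from $C(0,1,0)$ to $C(0,y_1,z_1)$. Your family $\Phi_{s,\lambda,c}:(x,y,z)\mapsto(x+sy,\lambda y,\lambda z+c)$ is instead a group of commuting-preserving bijections of all of $\mcl{M}$, acting transitively since only $y_1\in\mbb{F}_q^*$ is needed. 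This unifies (3) and (4) in one stroke and explains structurally why every centralizer has the same $\gom$, whereas the paper's maps are tailored to each case and only defined on the centralizers themselves. The paper's approach is slightly more hands-on; yours buys a conceptual symmetry of $\mcl{M}$ that makes the isomorphism of all centralizers transparent.
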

\begin{proof}
Suppose $(\ga,\gb,\gga)$ commutes with every element of the set $\{(mx+c,a,amx) \in \mcl{M}
\mid x \in \mbb{F}_q\}$. Then, we get $\gb = a$ and $\gga=a(\ga-c)$.
Hence the Lemma~\ref{lemma:noncommutingsize}(1) follows.

The Lemma~\ref{lemma:noncommutingsize}(2) follows because $y \neq 0
\neq y_1$ and $z_1=z+x_1y-xy_1$.

To prove the Lemma~\ref{lemma:noncommutingsize}(3), we observe the
following. Consider the map from \equa{C(m,1,0) &\lra C(x_1,y_1,z_1)
\text{ given by }\\ (x,y,z) &\lra
(\frac{x}{y_1},\frac{my}{x_1},z_1+z), \text{ where } m=x_1y_1.}
\begin{itemize}
\item Since $x_1y_1 \neq 0$, we have $(x,y,z) \in C(m,1,0)$ if and only if $(\frac{x}{y_1},\frac{my}{x_1},z_1+z) \in
C(x_1,y_1,z_1)$.
\item The map $(x,y,z) \lra (\frac{x}{y_1},\frac{my}{x_1},z_1+z)$ is
bijective from $C(m,1,0) \lra C(x_1,y_1,z_1)$.
\item Since $x_1y_1=m \neq 0$, the map is preserving the commutative property.
\end{itemize}
Hence the Lemma~\ref{lemma:noncommutingsize}(3) follows.

To prove the Lemma~\ref{lemma:noncommutingsize}(4), we observe the
following. Consider the map \equa{ C(0,1,0) &\lra C(0,y_1,z_1)
\text{ given by }\\ (x,y,x) &\lra (x,y_1y,xy_1+z_1).}
\begin{itemize}
\item We observe that $(x,y,x) \in C(0,1,0)$ and $(x,y_1y,xy_1+z_1) \in
C(0,y_1,z_1)$.
\item The map $(x,y,x) \lra (x,y_1y,xy_1+z_1)$ is
bijective from $C(0,1,0) \lra C(0,y_1,z_1)$.
\item The map is a commuting preserving
bijection.
\end{itemize}
Hence the Lemma~\ref{lemma:noncommutingsize}(4) follows.

To prove the Lemma~\ref{lemma:noncommutingsize}(5) we observe that
for two elements $(my_1+z_1,y_1,z_1),(my_2+z_2,y_2,z_2) \in
C(m,1,0)$ the commuting condition gives $z_1y_2-z_2y_1 = z_1-z_2$.
Consider the bijection \equa{C(m,1,0) &\lra \mcl{N} \text{ given by
}\\ (my+z,y,z) &\lra (z,y).} This is a commuting preserving
bijection and so the Lemma~\ref{lemma:noncommutingsize}(5) follows.

Using the Lemma~\ref{lemma:configure}, the
Lemma~\ref{lemma:noncommutingsize}(6) follows.
\end{proof}
~\\
\subsection{The geometry of centralizer sets in $\mcl{M}$ and the method of abelian decompositions for $\mcl{M}$}
~\\
The following Theorem~\ref{Theorem:NonCommute} characterizes the
non-commuting structure for the centralizer subsets. They all turn out to be
isomorphic. This is a ``sort of first structure theorem'' for the
non-commuting structure $\mcl{M}$.
\begin{theorem}[Geometry of centralizer sets in $\mcl{M}$]
\label{Theorem:NonCommute} For the non-commuting structure $\mcl{M}$, we have the following.
\begin{enumerate}
\item For any $(x,y,z)
\in \mcl{M}$ we have $\gom(C(x,y,z)) = q+1$.
\item For any $(x,y,z)
\in \mcl{M}$ the non-commuting substructure $C(x,y,z) \subs \mcl{M}$ is
isomorphic to $\mcl{N}$.
\end{enumerate}
\end{theorem}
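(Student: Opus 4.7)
The plan is to obtain both statements of Theorem~\ref{Theorem:NonCommute} directly from the pieces already assembled in Lemma~\ref{lemma:noncommutingsize}. Observe that the two parts of the theorem are not independent: once part (2) is proved, part (1) is an immediate consequence of Lemma~\ref{lemma:noncommutingsize}(6), since clique number is preserved by commuting-preserving bijections. So the real work is to exhibit, for every $(x,y,z)\in\mcl{M}$, an isomorphism of non-commuting structures $C(x,y,z)\cong\mcl{N}$.

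The strategy is a case split on whether $x=0$ or $x\neq 0$; together these cover $\mcl{M}$ since $y\in\mbb{F}_q^{*}$ is always nonzero. First I would treat the generic case $x\neq 0$. Put $m:=xy\in\mbb{F}_q^{*}$. Lemma~\ref{lemma:noncommutingsize}(3) produces a commuting-preserving bijection $C(m,1,0)\lra C(x,y,z)$, and Lemma~\ref{lemma:noncommutingsize}(5) provides a commuting-preserving bijection $C(m,1,0)\lra\mcl{N}$. Composing one with the inverse of the other yields an isomorphism $C(x,y,z)\cong\mcl{N}$. Next, for the degenerate case $x=0$, Lemma~\ref{lemma:noncommutingsize}(4) gives a commuting-preserving bijection $C(0,1,0)\lra C(0,y,z)$, while Lemma~\ref{lemma:noncommutingsize}(5) applied with $m=0$ identifies $C(0,1,0)$ with $\mcl{N}$. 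Composition again yields $C(0,y,z)\cong\mcl{N}$.

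Having established (2) in both cases, part (1) follows at once: under any isomorphism of non-commuting structures the maximal non-commuting sets on either side correspond, so $\gom(C(x,y,z))=\gom(\mcl{N})$, and Lemma~\ref{lemma:noncommutingsize}(6) evaluates the right-hand side as $q+1$.

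There is no genuine obstacle here; the proof is essentially a bookkeeping exercise that chains together the commuting-preserving bijections constructed earlier. The only point worth verifying carefully is that the case $x=0$ is not overlooked in the application of Lemma~\ref{lemma:noncommutingsize}(3), which requires $x_1y_1\neq 0$: this is precisely why the separate appeal to part (4) (covering $x=0$) is needed, after which part (5) with $m=0$ closes the argument.
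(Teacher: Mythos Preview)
Your proposal is correct and follows exactly the route the paper intends: the paper's own proof is the single line ``It follows from Lemma~\ref{lemma:noncommutingsize},'' and you have simply unpacked that reference by chaining parts~(3)--(6) with the case split $x\neq 0$ versus $x=0$. Nothing is missing and nothing diverges from the paper's argument.
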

\begin{proof}
It follows from Lemma~\ref{lemma:noncommutingsize}.
\end{proof}
In the next theorem we determine an upper bound of a cover by abelian subsets of $\mcl{M}$.
\begin{theorem}[Commuting size for $\mcl{M}$]
\label{theorem:CommutingSize} For the non-commuting structure $\mcl{M}$, we have the following.
\begin{enumerate}
\item The cardinality of any abelian set in the non-commuting structure
$\mcl{M}$ is at most $q$.
\item There does not exist an abelian decomposition into fewer than
$q(q-1)$ sets for the non-commuting structure $\mcl{M}$.
\item The size of a maximal non-extendable abelian set in the non-commuting structure $\mcl{M}$ is $q$.
\end{enumerate}
\end{theorem}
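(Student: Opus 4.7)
My plan is to derive all three parts from Theorem~\ref{Theorem:NonCommute}(2), which identifies each centralizer $C(x,y,z)$ in $\mcl{M}$ with the model non-commuting structure $\mcl{N}$, together with the reflexivity of the commuting relation on $\mcl{M}$ (explicitly noted in Definition~\ref{defn:Config} and Remark~\ref{remark:NoncommutingRelationAsCommutingCondition}). The crux is therefore to determine the maximum size of an abelian subset of $\mcl{N}$, which I will show equals $q$.

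For part (1), let $A \subseteq \mcl{M}$ be an arbitrary abelian set and choose any $a_0 \in A$. By reflexivity $a_0 \in C(a_0)$, and since $A$ is abelian every other $a \in A$ commutes with $a_0$ and so lies in $C(a_0)$; hence the whole of $A$ sits inside $C(a_0)$. By Theorem~\ref{Theorem:NonCommute}(2), $C(a_0)$ is isomorphic as a non-commuting structure to $\mcl{N}$, and any such isomorphism sends abelian subsets to abelian subsets. It therefore suffices to bound abelian subsets of $\mcl{N}$. Rewriting the commuting condition on $\mcl{N}$ in the symmetric form $x_1(y_2 - 1) = x_2(y_1 - 1)$, I split into cases: if an abelian set $B \subseteq \mcl{N}$ contains any $(x,1)$ with $x \neq 0$, then every other element $(x_2, y_2) \in B$ must satisfy $x(y_2 - 1) = 0$, forcing $y_2 = 1$, so $B \subseteq \{(x,1) : x \in \mbb{F}_q\}$ and $|B| \leq q$; otherwise all elements of $B$ with $y \neq 1$ lie on a common line $x = r(y-1)$ for some fixed $r$, contributing at most $q - 2$ points, plus possibly the single point $(0,1)$, giving $|B| \leq q-1$. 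In either case $|B| \leq q$, and hence $|A| \leq q$.

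Part (2) is then immediate from the cardinality count: since $|\mcl{M}| = q^2(q-1)$ and every abelian piece has size at most $q$ by part (1), any partition of $\mcl{M}$ into abelian sets requires at least $|\mcl{M}|/q = q(q-1)$ parts. Part (3) is the combination of the upper bound in part (1) with the explicit non-extendable abelian set of size $q$ already exhibited in Lemma~\ref{lemma:noncommutingsize}(1) (for instance, take any $a, m, c$ with $am \neq 0$).

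The main place the argument could slip is in using the \emph{reflexivity} of $R$ on $\mcl{M}$ to put all of $A$, rather than just $A \setminus \{a_0\}$, inside $C(a_0)$: without that observation, the same scheme only delivers $|A| \leq q + 1$ and the sharp bound is lost. The case analysis inside $\mcl{N}$ is short but has to be done carefully enough to rule out hybrid configurations mixing $y = 1$ and $y \neq 1$ elements beyond the $q - 1$ bound.
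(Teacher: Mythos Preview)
Your proof is correct and follows essentially the same route as the paper. Both arguments reduce part (1) to bounding abelian subsets of a single centralizer (using reflexivity, which you make explicit and the paper leaves implicit in the sentence ``any abelian set is contained in a centralizer set''), pass through the isomorphism with $\mcl{N}$ from Theorem~\ref{Theorem:NonCommute}(2), and then do the same two-case analysis: the paper phrases it as ``first row versus other rows'' of the matrix $C(m\cdot i + j, i, j)$, while you phrase it via the rewritten relation $x_1(y_2-1)=x_2(y_1-1)$ and split on whether $B$ contains some $(x,1)$ with $x\neq 0$. Under the bijection $(my+z,y,z)\mapsto (z,y)$ of Lemma~\ref{lemma:noncommutingsize}(5) these are literally the same dichotomy, and both yield $|B|\leq q$ in the ``$y=1$'' case and $|B|\leq q-1$ otherwise. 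Parts (2) and (3) are handled identically to the paper.
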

\begin{proof}
Any abelian set is contained in a centralizer set. Using
Theorem~\ref{Theorem:NonCommute}, we get that the geometry of the
centralizer set is isomorphic to $\mcl{N}$. Hence it is enough
to prove that the cardinality of a maximal abelian set in
$\mcl{N}$ is bounded by $q$. Let us assume without loss
generality that the abelian set is contained in $C(m,1,0)$ for some
$m \in \mbb{F}_q$. If there exists an element in the abelian set
coming from a row other than the first row of the matrix
$C(m*i+j,i,j)$ then the set contains at most one element from each
row and there are only $(q-1)$ rows. Otherwise the set is completely
contained in the first abelian row which has $q$ elements. However
using the Lemma~\ref{lemma:noncommutingsize}(1) we see that the
first row is a non-extendable abelian set. Hence the
Theorem~\ref{theorem:CommutingSize}(1) follows.

Let $\mcl{M} = \us{i=1}{\os{n}{\bigcup}} A_i$ be any abelian
decomposition into sets. Then by using
Theorem~\ref{theorem:CommutingSize}(1) we have the following. \equa{& q^2(q-1) = |\mcl{M}| \leq \us{i=1}{\os{n}{\sum}}|A_i| \leq nq.\\
& \Ra n \geq q(q-1).} Hence the
Theorem~\ref{theorem:CommutingSize}(2) follows.

The Theorem~\ref{theorem:CommutingSize}(3) is a consequence of the
Theorem~\ref{theorem:CommutingSize}(1).
\end{proof}
\subsection{Trivial upper and lower bounds for non-commuting sets in the non-commuting structure $\mcl{M}$}
\begin{proof}[Proof of Theorem \ref{theorem:NonCommutingsizeBound}]
Using the Lemma~\ref{lemma:noncommutingsize}(1), consider the
decomposition into non-extendable abelian sets as follows.
\equ{\mcl{M} = \us{a \in \mbb{F}_q^*,c \in \mbb{F}_q}{\bigcup}
\{(x+c,a,ax) \in \mcl{M} \mid x \in \mbb{F}_q\}.}

We immediately see that since $|\mcl{M}| = q^2(q-1)$ and there are
$q(q-1)$ abelian sets indexed by $a \in \mbb{F}_q^*,c \in \mbb{F}_q$
each of size $q$, this decomposition is disjoint. Hence \equ{\mcl{M}
= \us{a \in \mbb{F}_q^*,c \in \mbb{F}_q}{\bigsqcup} \{(x+c,a,ax) \in
\mcl{M} \mid x \in \mbb{F}_q\}.} We could also prove disjointness
directly. Now the
Theorem~\ref{theorem:NonCommutingsizeBound} follows by using  Lemma~\ref{lemma:configureM}.
\end{proof}
\begin{remark}[Method of abelian decompositions for $\mcl{M}$]
In the view of Theorem~\ref{theorem:CommutingSize}(2),
it is clear that, in the case of non-commuting structure $\mcl{M}$,
the method of finding an upper bound for the cardinality of maximal non-commuting set,
using abelian decompositions cannot be
improved further from $q(q-1)$.
\end{remark}
\begin{obs}
\begin{itemize}
\item $\us{q \lra \infty}{liminf}\ \ \frac{\gom(\mcl{M})}{2q} \geq 1$.
\item $\us{q \lra \infty}{limsup}\ \ \frac{\gom(\mcl{M})}{q^2} \leq 1$.
\item For $q=3$, $\gom(\mcl{M}) = 2q = 6$.
\item If $\gom(\mcl{M})$ is a polynomial in $q$, then it is a linear
polynomial with leading coefficient $\geq 2$ or a degree $2$
polynomial with leading coefficient between $0$ and $1$.
\end{itemize}
\end{obs}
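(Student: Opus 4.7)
The plan is to extract each of the four bullets as a direct elementary consequence of the two-sided bound $2q \leq \gom(\mcl{M}) \leq q(q-1)$ established in Theorem~\ref{theorem:NonCommutingsizeBound}. No new structural argument is needed; the work is just a careful reading of what the inequality forces.

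For the first bullet, I would note that the lower bound $\gom(\mcl{M}) \geq 2q$ holds for every $q$ (assuming $q\neq 2$ as in Lemma~\ref{lemma:configureM}), so $\frac{\gom(\mcl{M})}{2q}\geq 1$ term-by-term, and taking liminf preserves this. For the second bullet, the upper bound gives $\gom(\mcl{M}) \leq q(q-1) < q^2$, hence $\frac{\gom(\mcl{M})}{q^2} < 1$ for every $q$, so the limsup is at most $1$. For the third bullet, I would just plug in $q=3$ to observe $2q = 6 = q(q-1)$, so the two bounds collapse and force $\gom(\mcl{M}) = 6$.

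The fourth bullet is the only one requiring a brief polynomial-degree argument. Suppose $\gom(\mcl{M}) = f(q)$ for some polynomial $f \in \mathbb{R}[q]$ of degree $d$ with leading coefficient $a$. The bound $2q \leq f(q) \leq q^2 - q$ valid for all (sufficiently large) prime powers $q$ forces $1 \leq d \leq 2$: degree $0$ is excluded because $f(q) \geq 2q \to \infty$, and degree $\geq 3$ is excluded because $f(q) \leq q^2 - q$. If $d = 1$, writing $f(q) = aq + b$, the condition $aq + b \geq 2q$ for all large $q$ forces $a \geq 2$. If $d = 2$, writing $f(q) = aq^2 + bq + c$, positivity of $f$ for large $q$ together with $f(q) \leq q^2 - q$ forces $0 < a \leq 1$, i.e.\ $a$ lies strictly between $0$ and $1$ (inclusive of $1$).

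I expect no real obstacle: the whole observation is essentially a bookkeeping corollary of Theorem~\ref{theorem:NonCommutingsizeBound}, with the only subtlety being to articulate the degree analysis in the fourth bullet cleanly (in particular, ruling out degree $0$ via the lower bound and degree $\geq 3$ via the upper bound, and then reading off the admissible ranges for the leading coefficient in the two remaining cases).
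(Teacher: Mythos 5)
Your proposal is correct and coincides with the paper's intent: the Observation is stated without proof precisely because all four bullets are immediate bookkeeping corollaries of the two-sided bound $2q \leq \gom(\mcl{M}) \leq q(q-1)$ from Theorem~\ref{theorem:NonCommutingsizeBound} (the lower bound requiring $q \neq 2$, which is harmless in the asymptotic statements). Your degree analysis in the fourth bullet is the right argument and is the only step that needs any articulation, and you articulate it correctly.
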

In the following sections we analyze the non-commuting structure
$\mcl{M}$ via the non-commuting structure $\mcl{Q}$ and find some
geometrically interesting non-commuting subsets in the non-commuting structure
$\mcl{M}$ to improve the lower bound. It is a trivial observation
that $\gom(\mcl{M}) \leq \gom(\mcl{Q})$.
\section{Non-commuting structure $\mcl{Q}$}\label{sec:confiQ}

First we consider the action of the group $GL_2(\mbb{F}_q)$ on $\mcl{Q}$ which does not exist on
the non-commuting structure $\mcl{M}$.

\subsection{Action of the group $GL_2(\mbb{F}_q)$ on $\mcl{Q}$}
The group $GL_2(\mbb{F}_q)$ acts on the non-commuting structure as follows.
Let $(x,y,z) \in \mbb{F}_q^3, A \in GL_2(\mbb{F}_q)$. Then the action is defined as
\equ{A.(x,y,z) = ((A(x,y)^t)^t,det(A)z).}

This action preserves the commuting condition \equ{CC:
x_1y_2-y_2x_1=z_1-z_2 \Lra det \mattwo {x_1}{y_1}{x_2}{y_2} =
z_1-z_2.}
We have \equ{det(A \mattwo {x_1}{y_1}{x_2}{y_2}) =
det(A)(z_1-z_2).}

We say a line $L$ is ``commuting line" if for any two points $x\neq y$ on $L$, satisfy $x CC y$. On the other hand
if for any two points $x\neq y$ on $L$, satisfy $x \neg CC y$, then we say $L$ is a ``non-commuting line".
\begin{lemma}
Let $(x_0,y_0,z_0) \in \mbb{F}_q^3, (a,b,c) \in \mbb{F}_q^3\bs
\{0\}$. In $\mcl{Q}$, suppose the equations of a line $L$ passing through $(x_0,y_0,z_0)$ and parallel to the vector $<a,b,c>$ is given by \equ{L:
x=x_0+at,y=y_0+bt,z=z_0+ct, t \in \mbb{F}_q.}
Then the line $L$ is a commuting line if
and only if $\Det \mattwo ab{x_0}{y_0} = c$.
\end{lemma}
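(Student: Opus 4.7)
The plan is to reduce the commuting condition along the line to a single scalar identity by a direct determinant computation, exploiting the bilinear/antisymmetric nature of the $2\times 2$ determinant.

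First I would pick two arbitrary distinct points on $L$, say $P_i = (x_0 + at_i, y_0 + bt_i, z_0 + ct_i)$ for $i = 1,2$ with $t_1 \neq t_2$. The condition ``$L$ is a commuting line'' is, by definition, that every such pair satisfies the commuting relation $CC$, i.e.
\equa{
\Det\mattwo{x_0+at_1}{y_0+bt_1}{x_0+at_2}{y_0+bt_2} = (z_0+ct_1)-(z_0+ct_2) = c(t_1-t_2).
}

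Next I would expand the left-hand side. Using multilinearity of the determinant in the rows (equivalently, direct expansion), the constant row $(x_0, y_0)$ cancels against itself, leaving only cross terms. A short computation gives
\equa{
\Det\mattwo{x_0+at_1}{y_0+bt_1}{x_0+at_2}{y_0+bt_2}
= (ay_0-bx_0)(t_1-t_2)
= \Det\mattwo{a}{b}{x_0}{y_0}\,(t_1-t_2).
}
Thus the commuting condition for the pair $(P_1,P_2)$ becomes
\equa{
\Bigl(\Det\mattwo{a}{b}{x_0}{y_0}-c\Bigr)(t_1-t_2) = 0.
}

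Finally I would conclude in both directions. If $\Det\mattwo{a}{b}{x_0}{y_0}=c$, the equality holds for all $t_1,t_2$, so every pair of points on $L$ commutes and $L$ is a commuting line. Conversely, if $L$ is a commuting line then the above identity holds for some (indeed every) pair with $t_1\neq t_2$; since $t_1-t_2\neq 0$ in $\mbb{F}_q$, we may cancel it to obtain $\Det\mattwo{a}{b}{x_0}{y_0}=c$.

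There is no real obstacle here: the only thing to watch is that the scaled independence of the result from $t_1, t_2$ is precisely what makes it a property of the line and not just of a chosen pair, which is why the cancellation of $(t_1-t_2)$ in the last step is legitimate (the ambient field has at least two elements, so such a pair exists).
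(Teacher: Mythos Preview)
Your argument is correct and follows exactly the paper's approach: pick two distinct parameter values $t_1\neq t_2$, expand the determinant to extract the factor $(t_1-t_2)\Det\mattwo{a}{b}{x_0}{y_0}$, and compare with $c(t_1-t_2)$. The paper's proof is simply a one-line compressed version of what you wrote.
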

\begin{proof}
Let $t_1 \neq t_2$ be two elements in $\mbb{F}_q$. Then \equ{\Det
\mattwo
{x_0+at_1}{y_0+bt_1}{x_0+at_2}{y_0+bt_2}=(z_0+ct_1)-(z_0+ct_2) \Lra
\Det \mattwo ab{x_0}{y_0} = c.}
This proves the lemma.
\end{proof}
\begin{remark}
The above lemma gives the fundamental observation about the non-commuting structure $\mcl{Q}$ that each line is either a commuting line or a non-commuting line.
\end{remark}

\begin{lemma}
For any finite characteristic of the field $\mbb{F}_q$,
there exists a non-extendable non-commuting set of size $2q$ in $\mcl{Q}$ which is a union of two lines.
\end{lemma}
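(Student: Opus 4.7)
The plan is to exhibit two explicit non-commuting lines $L_1,L_2$ in $\mcl{Q}$ such that every point of $L_1$ fails to commute with every point of $L_2$, and then to verify that no point of $\mcl{Q}\bs(L_1\cup L_2)$ can be adjoined without destroying the non-commuting property.

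For the construction I would take
\equ{L_1 = \{(t,0,t)\mid t\in \mbb{F}_q\},\qquad L_2 = \{(s,1,1)\mid s\in \mbb{F}_q\}.}
Here $L_1$ is the line through $(0,0,0)$ with direction $\langle 1,0,1\rangle$ and $L_2$ is the line through $(0,1,1)$ with direction $\langle 1,0,0\rangle$. Using the preceding lemma characterizing (non-)commuting lines, $L_1$ is non-commuting because $1\cdot 0 - 0\cdot 0 = 0 \neq 1$, and $L_2$ is non-commuting because $1\cdot 1 - 0\cdot 0 = 1 \neq 0$. Since every point of $L_1$ has $y$-coordinate $0$ and every point of $L_2$ has $y$-coordinate $1$, the two lines are disjoint, so $|L_1\sqcup L_2|=2q$ in any characteristic, $q=2$ included.

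Next I would check cross non-commutation. For $(t,0,t)\in L_1$ and $(s,1,1)\in L_2$,
\equ{\Det\mattwo{t}{0}{s}{1}=t,\qquad z_1-z_2=t-1,}
which differ since $1\neq 0$ in any field. Combined with the previous step this shows $L_1\cup L_2$ is a non-commuting set of size $2q$.

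The main step, and the only non-routine one, is non-extendability. Let $(x,y,z)\in \mcl{Q}\bs(L_1\cup L_2)$; I would show $(x,y,z)$ must commute with some member of $L_1\cup L_2$. The commuting equation with $(t,0,t)\in L_1$ reduces to $t(1-y)=z$. If $y\neq 1$ this has the solution $t=z/(1-y)\in\mbb{F}_q$, and if $y=1$ and $z=0$ it is satisfied by every $t\in\mbb{F}_q$; in both cases $(x,y,z)$ commutes with a point of $L_1$. The only remaining case is $y=1$ and $z\neq 0$, where $(x,1,z)\notin L_1$ automatically and $(x,1,z)\notin L_2$ forces $z\neq 1$; here the commuting equation of $(x,1,z)$ with $(s,1,1)\in L_2$ reduces to $s=x-z+1\in\mbb{F}_q$, which always has a solution. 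Hence every point outside $L_1\cup L_2$ commutes with some point of the union, proving non-extendability and completing the argument. The subtlety, if any, is uniformity across characteristics: the explicit pair above avoids the restriction $q\neq 2$ that appeared in Lemma~\ref{lemma:configureM}, because in $\mcl{Q}$ the second coordinate is allowed to be $0$, providing the extra room used in constructing $L_1$.
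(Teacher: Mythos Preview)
Your proof is correct and follows the same overall strategy as the paper: exhibit two explicit non-commuting lines with proportional $(a,b)$-direction components and check that no point of one line commutes with any point of the other. Your pair $L_1=\{(t,0,t)\}$, $L_2=\{(s,1,1)\}$ is a different (and somewhat cleaner) instance of the paper's general scheme; indeed it satisfies exactly the determinant conditions the paper isolates, with $(a,b)=(\ga,\gb)=(1,0)$.

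The one genuine difference is that the paper merely asserts non-extendability of its example, whereas you supply a full verification via the case split on $y\neq 1$, $(y,z)=(1,0)$, and $y=1,z\neq 0$. That argument is correct and fills a gap left implicit in the paper. Your remark that allowing $y=0$ in $\mcl{Q}$ is what makes the construction uniform in the characteristic (in contrast to the $q\neq 2$ hypothesis in Lemma~\ref{lemma:configureM} for $\mcl{M}$) is also to the point.
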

\begin{proof}
To produce $2q$-size non-commuting set, first consider a union of two non-commuting lines
\equa{&L_1: x=x_0+at,y=y_0+bt,z=z_0+ct\\
&L_2:  x=x_1+\ga t,y=y_1+\gb t,z=z_1+\gga t} i.e. $c \neq det
\mattwo ab{x_0}{y_0}, \gga \neq det \mattwo {\ga}{\gb}{x_1}{y_1}$.
The commuting condition between these two lines gives rise to the
following equation. \equ{det \mattwo{x_0}{y_0}{x_1}{y_1} + det
\mattwo{a}{b}{x_1}{y_1}t_0+det \mattwo {x_0}{y_0}{\ga}{\gb}t_1+det
\mattwo{a}{b}{\ga}{\gb}t_0t_1 = z_0-z_1+ct_0-\gga t_1,} where
$t_0,t_1 \in \mbb{F}_q$. If for some $\gl \neq 0$,
$\gl(a,b)=(\ga,\gb) \neq 0$ and \equa{&det \mattwo ab{x_1}{y_1}=c,
det \mattwo{x_0}{y_0}{\ga}{\gb}=-\gga \text{
and }\\
&det \mattwo ab{x_1}{y_1} \neq det \mattwo ab{x_0}{y_0} \\
&z_0-z_1 \neq det \mattwo{x_0}{y_0}{x_1}{y_1},} then we get a union
of two lines say for example
\equa{&L_1: x=t,y=1+t,z=2t\\
&L_2:  x=t,y=2+t,z=1+t.\\
} as non-commuting set of size $2q$ which is non-extendable.
This proves the lemma.
\end{proof}
\begin{remark}
\begin{enumerate}
\item In the above observation, by choosing $b=0,\gb=0$ and $y_0y_1 \neq 0$ we
get a non-commuting set of size $2q$ which is a union of two lines
in the non-commuting structure $\mcl{M}$. For example consider for $char(\mbb{F}_q) \neq 2$, the lines
$L_1$ and $L_2$
\equa{&L_1: x=1+t,y=2,z=t\\
&L_2:  x=2t,y=1,z=4t.\\
}
\item In Lemma~\ref{lemma:configureM}, we have obtained a non-extendable
non-commuting set of size $2q$. This set is not a union of two lines. 
Instead it is a union of a line, a line without a point and another point.
\item Since we have two geometrically
different examples of $2q$-size non-commuting sets in $\mcl{Q}$, there is no finite group which acts on the non-commuting structure $\mcl{Q}$
preserving linear structure, preserving the commuting condition and
acts transitively on the collection of non-extendable non-commuting
sets of size $2q$ in $\mcl{Q}$.
\end{enumerate}
\end{remark}
\subsection{Lower bound for the non-commuting structure $\mcl{Q}$}
In this subsection, we improve the lower bound for $\gom(\mcl{Q})$.
\begin{lemma}
\label{Lemma:3qLowerBoundLemma} For the non-commuting structure $\mcl{Q}$, there exists a union of $3$-lines
without a bounded $o(1)-$ set (i.e of size at least $3q-3$), which gives rise to
a non-commuting set. Further, for $q > 3$, $char(\mbb{F}_q) \neq 2$, there exists a non-commuting set
of size $3q-2$ which is not contained in a union of three lines.
\end{lemma}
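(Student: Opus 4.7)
The plan is to construct three non-commuting lines $L_1,L_2,L_3$ in $\mcl Q$ sharing the common $(x,y)$-direction $(1,1)$ but carrying three distinct $z$-slopes $c_1,c_2,c_3$, for example $c_1=0$, $c_2=1$, $c_3=2$ (distinct once $\mrm{char}(\mbb F_q)\neq 2$). Because the $(x,y)$-directions coincide, the $t_0t_1$ coefficient $D$ appearing in the previous lemma vanishes, and the commuting-pair equation between any two lines reduces to a linear equation in $(t_0,t_1)$. I will choose the base points $(x_i,y_i,z_i)$ so that the differences satisfy the cyclic relations $y_1-x_1=c_3$, $y_2-x_2=c_1$, $y_3-x_3=c_2$; each $L_i$ is then a non-commuting line because $y_i-x_i\neq c_i$. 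A direct inspection of the coefficients ``$B-c$'' and ``$C+\gamma$'' (in the notation of the previous lemma) shows that for each unordered pair $(L_i,L_j)$ exactly one of these two linear coefficients vanishes, so the $q$ commuting pairs between $L_i$ and $L_j$ all share a single common point lying on one of the two lines.

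For the first conclusion I will remove those three shared common points (one per pair of lines, situated on three distinct lines). The resulting subset of $L_1\cup L_2\cup L_3$ has size $3q-3$, and every pair in it is non-commuting: pairs within each $L_i$ by the non-commuting-line property, and pairs between distinct lines because the only would-be commuting partners have been removed. This yields the required union-of-three-lines configuration minus a bounded $o(1)$-set.

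For the second conclusion I will adjoin one further point $P^*=(x^*,y^*,z^*)$ off $L_1\cup L_2\cup L_3$. The commuting equation between $P^*$ and a parametrized point on $L_i$ is linear in the line parameter $t$ with leading coefficient $x^*-y^*+c_i$; whenever this coefficient is nonzero there is exactly one forced commuting point on $L_i$. I will impose the three linear equations demanding that these three forced commuting points coincide precisely with the three already-removed points, so that $P^*$ is non-commuting with every surviving member of the $3q-3$ set. The expected obstacle is consistency of this $3\times 3$ linear system in $(x^*,y^*,z^*)$: I anticipate and will verify by direct computation a dependency relation $R_3=R_1+R_2$ among the three rows (the right-hand sides satisfying the same relation), so that the system has rank two and admits a one-parameter family of solutions. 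Using that free parameter I can choose $(x^*,y^*,z^*)$ with $y^*-x^*\notin\{c_1,c_2,c_3\}$ so that $P^*\notin L_1\cup L_2\cup L_3$; this is exactly where the hypothesis $q>3$ is needed.

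Finally I will show that the resulting non-commuting set $S$ of size $3q-2$ is not contained in any union $M_1\cup M_2\cup M_3$ of three lines, via a straightforward incidence count. Any line $M$ distinct from each $L_i$ meets each $L_i$ in at most one point, hence contributes at most $3$ points of $S\cap (L_1\cup L_2\cup L_3)$ plus possibly $P^*$, giving $|M\cap S|\le 4$; any line equal to some $L_i$ contributes exactly $q-1$ points of $S$. Case analysis on how many of $M_1,M_2,M_3$ coincide with some $L_i$ (zero, one, or two) yields in each case a total upper bound strictly smaller than $3q-2$ once $q\ge 5$, while the remaining case where all three $M_j$'s coincide with the $L_i$'s is excluded because $P^*\notin L_1\cup L_2\cup L_3$. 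The main technical obstacles are identifying base points $(x_i,y_i)$ that realise the cyclic ``single-vertex-sharing'' configuration between each pair of lines, and verifying the expected rank drop in the linear system defining $P^*$.
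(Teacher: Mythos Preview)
Your construction is correct and, at its core, coincides with the paper's proof: both take three lines with common $(x,y)$-direction $(1,1)$ and pairwise distinct $z$-slopes, impose the cyclic base-point conditions $y_i-x_i=c_{\sigma(i)}$ so that in each pairwise commuting equation exactly one linear coefficient vanishes, and then delete one point from each line to obtain a non-commuting set of size $3q-3$.

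Where your write-up differs is in the second step. The paper simply writes down the commuting line $(0,-1,0)+s(1,3,-1)$ and picks the point $s=2$ (with a separate choice when $3\mid q$), asserting without argument that the enlarged set is not a union of three lines. Your approach recovers this same line conceptually: the three removed points $P_1,P_2,P_3$ are pairwise commuting (each $P_i$ commutes with the entire line carrying $P_{i\pm 1}$), and for three pairwise-commuting points in $\mcl Q$ the identity
\[
\det\begin{pmatrix}X_1&Y_1&1\\X_2&Y_2&1\\X_3&Y_3&1\end{pmatrix}=0
\]
forces the coefficient matrix of your $3\times 3$ system to have rank at most $2$; consistency of the right-hand side then follows from the same commuting relations. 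The resulting one-parameter solution family is exactly the line through $P_1,P_2,P_3$, so your $P^*$ is precisely the paper's added point, and avoiding the three values $P_1,P_2,P_3$ is what consumes the hypothesis $q>3$. This viewpoint has the advantage of being uniform in the characteristic (no separate $3\mid q$ case) and of explaining \emph{why} the paper's choice works. One small caveat: the specific dependency you name, $R_3=R_1+R_2$, depends on the base points you end up choosing and may come out with different coefficients; what is guaranteed is only that \emph{some} nontrivial relation $\alpha R_1+\beta R_2+\gamma R_3=0$ with $\alpha+\beta+\gamma=0$ holds.

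Your incidence argument for ``not contained in three lines'' is a genuine addition --- the paper asserts this without proof --- and is correct once you note (as you implicitly do) that the $L_i$ are pairwise disjoint because their $(x,y)$-projections are three distinct parallel lines.
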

\begin{proof}
We again prove this statement by adjusting the determinants.
First consider the union of three non-commuting lines \equa{&L_1: x=x_0+at,y=y_0+bt,z=z_0+ct\\
&L_2:  x=x_1+\ga t,y=y_1+\gb t,z=z_1+\gga t\\
&L_3: x=x_2+pt,y=y_2+qt,z=z_2+rt,\\} where $c \neq det \mattwo
ab{x_0}{y_0}, \gga \neq det \mattwo {\ga}{\gb}{x_1}{y_1}, r \neq
\mattwo {p}{q}{x_2}{y_2}$. The commuting condition between pairs of
lines among $L_1,L_2,L_3$ yields the following equations \equa{&det
\mattwo{x_0}{y_0}{x_1}{y_1} + det \mattwo{a}{b}{x_1}{y_1}t_0+det
\mattwo {x_0}{y_0}{\ga}{\gb}t_1+det \mattwo{a}{b}{\ga}{\gb}t_0t_1 =
z_0-z_1+ct_0-\gga t_1\\
&det \mattwo{x_1}{y_1}{x_2}{y_2} + det
\mattwo{\ga}{\gb}{x_2}{y_2}t_1+det \mattwo {x_1}{y_1}pq t_2+det
\mattwo{\ga}{\gb}pq t_1t_2 = z_1-z_2+\gga t_1- rt_2\\
&det \mattwo{x_2}{y_2}{x_0}{y_0} + det \mattwo pq{x_0}{y_0}t_2+det
\mattwo {x_2}{y_2}{a}{b}t_0+det \mattwo{p}{q}{a}{b}t_2t_0 =
z_2-z_0+rt_2- ct_0,} where $t_0,t_1,t_2 \in \mbb{F}_q$. Suppose
\equa{&(a,b)=(\ga,\gb)=(p,q) \neq 0,\\
&det \mattwo ab{x_1}{y_1} =c,det \mattwo ab{x_2}{y_2} = \gga, det
\mattwo ab{x_0}{y_0} = r ~ and \\
&c \neq r,\gga \neq c,r \neq \gga.} Then the above equations become
\equa{&det \mattwo{x_0}{y_0}{x_1}{y_1} + (\gga - r)t_1 = z_0-z_1,\\
&det \mattwo{x_1}{y_1}{x_2}{y_2} + (r-c)t_2 = z_1-z_2,\\
&det \mattwo{x_2}{y_2}{x_0}{y_0} + (c-\gga)t_0 = z_2-z_0.\\}

Solve for $t_0,t_1,t_2$ and exclude those choices of $t_i,i=0,1,2$
on the lines $L_i,i=0,1,2$ respectively. This gives a non-commuting set of size $3q-3$.

For example the lines
\equa{&L_1: x=1+t_0,y=1+t_0,z=t_0\\
&L_2:  x=t_1,y=1+t_1,z=-t_1\\
&L_3: x=1+t_2,y=t_2,z=0\\
& \text{ with } t_1 \neq 1,t_2 \neq -1,t_0 \neq -\frac{1}{2}} give
rise to a set containing $3q-3$ non-commuting points. Now consider
the commuting line $(0,-1,0)+s(1,3,-1)$. From this line we add the
point $(2,5,-2)$ to the above set to get an extension into a bigger
non-commuting set of size $3q-2$ for $3 \nmid q$. If $3 \mid q \neq
3$, then we add an element $(s,-1,-s)$ with $s \nin \mbb{F}_3$ to get
a bigger non-commuting set. This set is not contained in union of
three lines.  This completes the proof of the lemma.
\end{proof}
\subsection{Criteria for the existence of non-commuting sets from the union of $m$ distinct lines in $\mcl{Q}$}
\begin{lemma}
\label{lemma:NonCommutativityLineLemma}  
Consider a set which is a union of the following
$m$ distinct lines given by  
\equ{L_i:x=x_i+a_it,y=y_i+b_it,z=z_i+c_it, i=0,1,2,3,\ldots,m-1.} Then, for large $q$ this gives rise
to a non-commutative set except possibly for a bounded $o(1)-$ subset (also refer
Remark~\ref{remark:affineset} and the initial part of the Section~\ref{sec:DNCSTAAS}) if and only if for every $0 \leq i < j \leq m-1$ the equation \equa{\bigg(det \mattwo
{a_i}{b_i}{x_j}{y_j}-c_i\bigg)\bigg(det \mattwo
{a_j}{b_j}{x_i}{y_i}-c_j\bigg)&=\bigg((z_i-z_j) - det \mattwo
{x_i}{y_i}{x_j}{y_j}\bigg) det
\mattwo {a_i}{b_i}{a_j}{b_j}\\
and ~\us{i=0}{\os{m-1}{\prod}} \bigg(c_i-det \mattwo
{a_i}{b_i}{x_i}{y_i}\bigg) &\neq 0\\
} holds.

In other words, existence of such a non-commuting set corresponds to
an existence of a solution to a collection of equations and an inequation corresponding to the collection 
of the sets of $m$ distinct lines.
\end{lemma}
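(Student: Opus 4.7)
The plan is to translate ``pairwise non-commuting'' for the union $\bigcup_{i=0}^{m-1}L_i$ into an explicit algebraic condition, one pair of lines at a time. I would begin by writing the commuting condition of an arbitrary point on $L_i$ at parameter $s$ and an arbitrary point on $L_j$ at parameter $t$:
\begin{equation*}
\Det\mattwo{x_i+a_i s}{y_i+b_i s}{x_j+a_j t}{y_j+b_j t}=(z_i+c_i s)-(z_j+c_j t).
\end{equation*}
Expanding bilinearly in $(s,t)$ produces an equation of the form $D_{ij}\, st + C_{ij}\, s + B_{ij}\, t + A_{ij}=0$, whose coefficients are (up to sign) the determinantal quantities in the statement, namely $D_{ij}=\Det\mattwo{a_i}{b_i}{a_j}{b_j}$, $C_{ij}=\Det\mattwo{a_i}{b_i}{x_j}{y_j}-c_i$, $B_{ij}=-\bigl(\Det\mattwo{a_j}{b_j}{x_i}{y_i}-c_j\bigr)$, and $A_{ij}=\Det\mattwo{x_i}{y_i}{x_j}{y_j}-(z_i-z_j)$. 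The diagonal case $i=j$ collapses to $C_{ii}(s-t)=0$, so $L_i$ is itself a non-commuting line if and only if $c_i\neq\Det\mattwo{a_i}{b_i}{x_i}{y_i}$; taking the product over $i$ yields the second (inequation) condition in the statement.

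Next I would analyze the off-diagonal solution set of the bilinear equation over $\mathbb{F}_q$. The central dichotomy is the following: either the quadric factors as $(D_{ij}t + C_{ij})(D_{ij}s + B_{ij})=0$ (equivalently, when $D_{ij}=0$, as a product involving only $s$ or only $t$), or it defines an irreducible affine hyperbola. In the factored case the commuting pairs between $L_i$ and $L_j$ are concentrated at a single value $s=s_0$ or $t=t_0$ (or both), and can be eliminated by removing at most two points from $L_i\cup L_j$, contributing $O(1)$ exceptional points per pair and hence $O(m^2)$ in total, a quantity independent of $q$. In the unfactored case, by contrast, the solution set is a genuine hyperbola carrying $q-1$ commuting pairs with $q-1$ distinct $s$-values and $q-1$ distinct $t$-values, so killing every one of them requires removing at least $q-1$ points, which is unbounded as $q\to\infty$. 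A direct computation shows that factorability of the bilinear over $\overline{\mathbb{F}_q}$ is governed by the discriminant identity $B_{ij}C_{ij}=A_{ij}D_{ij}$, and upon substituting the coefficients above and tracking signs one recovers exactly the determinantal equality displayed in the statement.

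Combining both halves gives the claimed equivalence: the union $\bigcup L_i$ is non-commuting up to a bounded exceptional subset precisely when the stated pairwise identity holds and each individual $c_i-\Det\mattwo{a_i}{b_i}{x_i}{y_i}$ is nonzero. The step I expect to require the most care is the converse direction in the degenerate sub-case $D_{ij}=0$, where the identity collapses to $B_{ij}C_{ij}=0$; here the bilinear becomes linear and I would verify by separately treating the branches $B_{ij}=0$, $C_{ij}=0$, and $B_{ij}=C_{ij}=0$ that the zero locus is respectively a single vertical slice $\{t=t_0\}$, a single horizontal slice $\{s=s_0\}$, or either empty or everything, in each case matching the bounded-removal criterion. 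The final sentence of the lemma is then immediate: the collection of $m$-tuples of lines whose union yields such a non-commuting set is cut out by the $\binom{m}{2}$ polynomial equations and $m$ polynomial inequations assembled above, expressing the data as a quasi-affine algebraic set in $6m$-dimensional parameter space.
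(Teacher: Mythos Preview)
Your approach is essentially the paper's: both expand the commuting relation between a point of $L_i$ at parameter $s$ and a point of $L_j$ at parameter $t$ into a bilinear form $D_{ij}st+C_{ij}s+B_{ij}t+A_{ij}=0$, argue that bounded removal is possible exactly when this form factors (otherwise its zero set projects onto $q-1$ values of each parameter), and identify factorability with the discriminant condition $B_{ij}C_{ij}=A_{ij}D_{ij}$, which unwinds to the displayed determinantal identity. Your write-up is, if anything, more explicit than the paper's about the discriminant and the case split at $D_{ij}=0$.

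There is, however, a genuine slip in your degenerate branch. You write that when $D_{ij}=B_{ij}=C_{ij}=0$ the bilinear locus is ``either empty or everything, in each case matching the bounded-removal criterion.'' The ``everything'' case does \emph{not} match: if in addition $A_{ij}=0$, then every point of $L_i$ commutes with every point of $L_j$, so one must discard an entire line (that is, $q$ points) to obtain a non-commuting set, and the exceptional set is unbounded in $q$. This situation can actually occur for distinct non-commuting lines: take $L_i=\{(s,0,s):s\in\mbb{F}_q\}$ and $L_j=\{(t,1,0):t\in\mbb{F}_q\}$. Each is a non-commuting line, the displayed identity reads $0=0$, and the product inequation holds, yet $\Det\mattwo{s}{0}{t}{1}=s=(z_i+c_is)-(z_j+c_jt)$ for all $s,t$. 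Thus the ``if'' direction, as you have argued it, fails without an extra nondegeneracy clause excluding $D_{ij}=C_{ij}=B_{ij}=A_{ij}=0$. The paper's own proof is equally silent on this stratum, so this is not a divergence from the paper but a shared gap; you should either add that clause to the hypotheses or note that in the later applications one may perturb away from it.
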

\begin{proof}
The commutative conditions for the lines give rise to the following
equations. For every pair $(i,j)$, $0 \leq i < j \leq (m-1)$ we have an equation
given by \equa{\bigg(det \mattwo
{x_i}{y_i}{x_j}{y_j}-(z_i-z_j)\bigg) + \bigg(det \mattwo
{a_i}{b_i}{x_j}{y_j}-c_i\bigg)t_i - \bigg(det \mattwo
{a_j}{b_j}{x_i}{y_i}-c_j\bigg)t_j &\\
+ det \mattwo {a_i}{b_i}{a_j}{b_j} t_it_j &=0.}

\begin{claim}\label{claim:factor}
For every $0 \leq i < j \leq (m-1)$, the equation corresponding to the pair $(i,j)$ factorizes
into at most two linear factors.
\end{claim}
Suppose not then there exists an equation involving $i_0,j_0$ among
the above, where we can solve $t_{j_0}$ in terms of $t_{i_0}$ and therefore, we get a bijection between lines
$L_{i_0}$ and  $L_{j_0}$ and this bijection is such that, except for one point, it maps a point to another 
which commutes with it.

Hence we will not get a non-commuting set of size $\approx mq$.

\begin{claim}
\begin{enumerate}
\item For $\gd \neq 0$, the equation \equ{\ga + \gb x + \gga y + \gd xy =
0} has $2q-1$ solutions over the finite field $\mbb{F}_q$ if and
only if $det \mattwo {\ga}{\gb}{\gga}{\gd} = 0$. Otherwise it has
$q-1$ solutions. Moreover if it has $2q-1$ solutions, then the LHS of
the equation splits into a product of two linear factors.
\item The following holds. \equa{\bigg(det \mattwo
{a_i}{b_i}{x_j}{y_j}-c_i\bigg)&\bigg(det \mattwo
{a_j}{b_j}{x_i}{y_i}-c_j\bigg) =\\
&\bigg((z_i-z_j) - det \mattwo {x_i}{y_i}{x_j}{y_j}\bigg) det
\mattwo {a_i}{b_i}{a_j}{b_j}.}
\end{enumerate}
\end{claim}
We observe that in the affine plane $\mbb{F}_q^2$ the equation
$xy=c$ has $q-1$ solutions for $c\neq 0$ and the equation
$xy=0$ has $2q-1$ solutions. Similarly for $\gd \neq 0$, the number
of solutions to the equation $\ga + \gb x + \gga y + \gd xy = 0$ is
$q-1$ or $2q-1$ depending on whether it remains irreducible or
factorizes into two linear factors. One necessary and sufficient
condition for reducibility into two linear factors is $det \mattwo
{\ga}{\gb}{\gga}{\gd} = 0$ or equivalently there exists $\gl \in
\mbb{F}_q$ such that $(\ga,\gb)=\gl(\gga,\gd)$. To complete the
proof of the Claim(2) and
Lemma~\ref{lemma:NonCommutativityLineLemma} we observe that if $det
\mattwo {a_i}{b_i}{a_j}{b_j} =0$, then either $det \mattwo
{a_i}{b_i}{x_j}{y_j} = c_i$ or $det \mattwo {a_j}{b_j}{x_i}{y_i} =
c_j$ and hence, the size of the non-commutative set cannot be $\approx mq$. This completes the proof of the lemma.
\end{proof}
\subsection{A non-commuting set of size almost $4q$ in $\mcl{Q}$ for large $q$ when $-3$ is a square}
\begin{lemma}
\label{lemma:NonCommuting4q} Suppose $char(\mbb{F}_q) \neq 3$. There exists non-commuting sets of size more than $4q-12$ in $\mcl{Q}$ whenever $-3$ is a square in
$\mbb{F}_q$ (i.e $q=p^n, p=a^2+ab+b^2$ for some $a,b \in \mbb{Z}$ or
equivalently $p\equiv 1\ mod\ 3$ or when $q=p^n, n$ even).
\end{lemma}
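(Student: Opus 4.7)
The plan is to apply Lemma~\ref{lemma:NonCommutativityLineLemma} with $m = 4$: produce four distinct lines $L_0, L_1, L_2, L_3$ in $\mcl{Q}$, each individually non-commuting (i.e.\ $c_i \neq \det\mattwo{a_i}{b_i}{x_i}{y_i}$), such that for every pair $(i,j)$ the commutation quadratic
$$\rho_{ij} + \mu_{ij}\, t_i - \nu_{ij}\, t_j + \lambda_{ij}\, t_i t_j = 0$$
splits as a product of a factor depending only on $t_i$ and one depending only on $t_j$, where $\lambda_{ij} = \det\mattwo{a_i}{b_i}{a_j}{b_j}$, $\mu_{ij} = \det\mattwo{a_i}{b_i}{x_j}{y_j}-c_i$, $\nu_{ij} = \det\mattwo{a_j}{b_j}{x_i}{y_i}-c_j$, and $\rho_{ij} = \det\mattwo{x_i}{y_i}{x_j}{y_j}-(z_i-z_j)$. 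Once such a configuration exists, each pair forces at most one exceptional value of $t_i$ on $L_i$ and one of $t_j$ on $L_j$, so from the $4q$ points on $L_0\cup L_1\cup L_2\cup L_3$ one removes at most $2\binom{4}{2}=12$ points, retaining a non-commuting set of size at least $4q-12$; the ``more than'' will be obtained either by a collision among the $12$ excluded points or by appending one extra point on a commuting line that commutes with no kept point.

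The splitting condition for each pair is, by the proof of Lemma~\ref{lemma:NonCommutativityLineLemma}, exactly
$$\mu_{ij}\,\nu_{ij} \;=\; \rho_{ij}\,\lambda_{ij}.$$
This is six polynomial conditions in the $24$ parameters $(a_i,b_i,c_i,x_i,y_i,z_i)$, $i=0,1,2,3$. I would first use affine translations in $(x,y,z)$ and the $GL_2(\mbb{F}_q)$-action on $\mcl{Q}$ (introduced in Section~\ref{sec:confiQ}) to normalize one line, typically forcing $(x_0,y_0,z_0)=(0,0,0)$ and $(a_0,b_0) = (1,0)$, and then rescale so that $c_0 = 1$. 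This reduces the effective number of free parameters substantially.

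A natural symmetric ansatz is to fix a primitive cube root of unity $\omega\in\mbb{F}_q$ and choose the direction vectors of the remaining three lines as a $\mathbb{Z}/3\mathbb{Z}$-orbit, for example $(a_k,b_k)=(1,\omega^{k-1})$ for $k=1,2,3$, or some $GL_2$-twist thereof. Such $\omega$ exists iff $x^2+x+1$ has a root in $\mbb{F}_q$, equivalently iff $-3$ is a square, which is exactly the hypothesis. Under this ansatz the cyclic symmetry collapses the three pairwise conditions among $L_1,L_2,L_3$ into a single condition, while the three conditions relating $L_1,L_2,L_3$ to $L_0$ collapse into one more; solving these two reduced conditions for the remaining parameters $(x_k,y_k,z_k,c_k)$, $k=1,2,3$, is then a matter of straightforward linear algebra over $\mbb{F}_q[\omega]=\mbb{F}_q$.

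The main obstacle is verifying that the symmetric ansatz actually produces a valid configuration: distinctness of the four lines, non-degeneracy ($c_i\neq \det\mattwo{a_i}{b_i}{x_i}{y_i}$) for each, and consistency of the reduced two-parameter system. This is where the hypothesis $-3$ a square and $\mathrm{char}(\mbb{F}_q)\neq 3$ are both needed: the first is required so that $\omega\in\mbb{F}_q$, while the second ensures $\omega\neq 1$ and that the various quantities $(\omega-1)$, $1+\omega+\omega^2=0$, $3$ appearing as denominators when solving the compatibility system are handled correctly. A secondary, milder obstacle is to show ``more than $4q-12$'': I would argue either that two of the twelve excluded $t_i$-values coincide for some pair (often forced by the symmetry of the configuration), or that the union of the four lines is properly contained in its non-commuting closure because a single point on an auxiliary commuting line in $\mcl{Q}$ can be adjoined — following the augmentation trick at the end of the proof of Lemma~\ref{Lemma:3qLowerBoundLemma}.
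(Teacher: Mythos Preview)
Your outline and the paper's proof share the ``three symmetric lines plus a fourth'' architecture, but the execution is quite different. The paper does not invoke a cube root of unity at all: it takes four \emph{horizontal} lines $L_i: z=z_i$ (so every $c_i=0$), specializes $a_i=y_i=1$, and imposes the cyclic choice $x_1=1/b_3,\ x_2=1/b_1,\ x_3=1/b_2,\ z_i=x_i$, which kills the three splitting conditions among $L_1,L_2,L_3$ by brute force. The three remaining conditions (those involving $L_4$) are then solved explicitly for $(b_4,x_4,z_4)$: eliminating $x_4$ and $z_4$ produces a genuine quadratic in $b_4$ whose discriminant is $-3\big((b_1-b_2)(b_2-b_3)(b_3-b_1)\big)^2$, and this is where the hypothesis that $-3$ be a square enters. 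The bulk of the paper's argument is then a case analysis verifying, for generic $b_1,b_2,b_3$, that the resulting $z_4$ differs from $z_1,z_2,z_3$ and that the various denominators do not vanish (treating separately the cases $p\equiv 1\bmod 3$ and $n$ even), with an explicit worked example at $q=7$.

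Your $\omega$-ansatz is more conceptual and would, if it goes through, explain \emph{why} $\sqrt{-3}$ shows up; but as written there is a gap. For the three $L_0$--$L_k$ conditions to collapse to one you need a $\mbb{Z}/3$-symmetry of $\mcl{Q}$ fixing $L_0$ and cycling $L_1,L_2,L_3$. The natural candidate $\mathrm{diag}(1,\omega)\in GL_2(\mbb{F}_q)$ does fix the direction $(1,0)$ and the base point $(0,0,0)$, but it scales the $z$-slope by $\det=\omega$, so your normalization $c_0=1$ is not invariant and $L_0$ is not fixed. This can be repaired (drop the normalization of $c_0$, or let the symmetry act only formally on the parameters of $L_1,L_2,L_3$ and verify by hand that the $L_0$--$L_k$ equation depends on $k$ only through a uniform substitution), but that work has to be done; and the surviving system in the free parameters is quadratic, not linear, so ``straightforward linear algebra'' undersells what remains. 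The non-degeneracy verification (distinctness, $c_i\neq\det\mattwo{a_i}{b_i}{x_i}{y_i}$) would still require the kind of careful check the paper carries out explicitly.
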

\begin{proof}
First consider the non-commuting horizontal lines not
meeting the $z$-axis. Let \equa{&L_1: x = x_1+a_1t,y=y_1+b_1t,z=z_1\\
&L_2: x = x_2+a_2t,y=y_2+b_2t,z=z_2\\
&L_3: x = x_3+a_3t,y=y_3+b_3t,z=z_3\\
&L_4: x = x_4+a_4t,y=y_4+b_4t,z=z_4} be four such lines in four
different horizontal planes $z=z_1,z=z_2,z=z_3,z=z_4$.

The factorizing conditions in Lemma~\ref{lemma:NonCommutativityLineLemma} among the lines reduces to the following.
For $1 \leq i < j \leq 4$, \equ{det \mattwo {a_i}{b_i}{x_i}{y_i} det
\mattwo {a_j}{b_j}{x_j}{y_j} = (z_i-z_j)det \mattwo
{a_i}{b_i}{a_j}{b_j}.}

Choosing $y_i = a_i=1, i =1,2,3,4,
x_1=\frac{1}{b_3},x_2=\frac{1}{b_1},x_3=\frac{1}{b_2}$ and $z_i=x_i$ we
get three out of six equations, namely , for $1 \leq i < j \leq 3$
\equ{det \mattwo {a_i}{b_i}{x_i}{y_i} det \mattwo
{a_j}{b_j}{x_j}{y_j} = (z_i-z_j)det \mattwo {a_i}{b_i}{a_j}{b_j}}
are satisfied. The remaining three equations, we have
\equ{(1-b_4x_4)(1-b_ix_i)=(z_4-x_i)(b_i-b_4) \text{ for }i=1,2,3}
for the three unknowns $b_4,x_4,z_4$. Now we solve these unknowns.

In order to solve, first we eliminate $x_4$ to get
\equ{\frac{(z_4-x_1)(b_1-b_4)}{1-b_1x_1} =
\frac{(z_4-x_2)(b_2-b_4)}{1-b_2x_2} =
\frac{(z_4-x_3)(b_3-b_4)}{1-b_3x_3}.} Substituting for $x_i$ in terms
of $b_j$ and eliminating $z_4$ we get the following equation in
$b_4$ i.e.
\equ{\frac{(b_1-b_2)(b_1-b_3)}{b_4-b_1}+\frac{(b_2-b_1)(b_2-b_3)}{b_4-b_2}+\frac{(b_3-b_1)(b_3-b_2)}{b_4-b_3}=0.}
This reduces to the following quadratic equation in $b_4$ i.e.
\equa{&(b_1^2+b_2^2+b_3^2-b_1b_2-b_2b_3-b_3b_1)b_4^2
-\bigg(\big(\us{1 \leq i\neq j \leq
3}{\sum}b_i^2b_j\big)-6b_1b_2b_3\bigg)b_4+\\
&(b_1^2b_2^2+b_2^2b_3^2+b_3^2b_1^2-b_1^2b_2b_3-b_1b_2^2b_3-b_1b_2b_3^2)=0}
or equivalently
\equa{&\bigg((b_2-b_3)^2+(b_3-b_1)^2+(b_1-b_2)^2\bigg)b_4^2
-2\bigg(b_1(b_2-b_3)^2+b_2(b_3-b_1)^2+b_3(b_1-b_2)^2\bigg)b_4\\
&+\bigg(b^2_1(b_2-b_3)^2+b^2_2(b_3-b_1)^2+b^2_3(b_1-b_2)^2\bigg)=0}
if the coefficient of $b_4$ does not vanish.

Solving for $b_4$ we get the following as two roots for $b_4$ i.e.

\equ{b_4=\frac{b_1(b_2-b_3)^2+b_2(b_3-b_1)^2+b_3(b_1-b_2)^2\pm
(b_1-b_2)(b_2-b_3)(b_3-b_1)\sqrt{-3}}{(b_2-b_3)^2+(b_3-b_1)^2+(b_1-b_2)^2}.}

With the help of the value for $b_4$, we get $z_4$ by using either of the equations \equ{z_4=
\frac{\bigg(\frac{b_1-b_4}{b_3-b_1}\bigg)-\bigg(\frac{b_2-b_4}{b_1-b_2}\bigg)}{b_3\bigg(\frac{b_1-b_4}{b_3-b_1}\bigg)-b_1\bigg(\frac{b_2-b_4}{b_1-b_2}\bigg)}=
\frac{\bigg(\frac{b_1-b_4}{b_3-b_1}\bigg)-\bigg(\frac{b_3-b_4}{b_2-b_3}\bigg)}{b_3\bigg(\frac{b_1-b_4}{b_3-b_1}\bigg)-b_2\bigg(\frac{b_3-b_4}{b_2-b_3}\bigg)}}
provided the denominators do not vanish. Now $x_4$ is given by any
of the following equations.
\equ{x_4=\frac{1}{b_4}\bigg(1-\frac{(z_4-x_1)(b_1-b_4)}{1-b_1x_1}\bigg)=\frac{1}{b_4}\bigg(1-\frac{(z_4-x_2)(b_2-b_4)}{1-b_2x_2}\bigg)
=\frac{1}{b_4}\bigg(1-\frac{(z_4-x_3)(b_3-b_4)}{1-b_3x_3}\bigg).}

Suppose $q=p^n$, where $n$ is even and $p \not\equiv 1\ mod\ 3$. Then
consider $b_1 \neq b_2 \neq b_3 \neq b_1$ so that $b_1,b_2,b_3 \in
\mbb{F}^*_p$ and hence give rise to a quadratic equation for $b_4$.

\begin{claim}
$z_4 \in \mbb{F}_p[\sqrt{-3}] \bs \mbb{F}_p$ and the horizontal
plane $z=z_4$ is different from the other three horizontal planes
$z=z_1,z=z_2,z=z_3$.
\end{claim}
\begin{proof}[Proof of Claim]
First we observe that since $-3$ is not a square in $\mbb{F}_p$, we
have that $b_4 \in \mbb{F}_p[\sqrt{-3}] \bs \mbb{F}_p$. Now consider the following cases.
\begin{enumerate}
\item $z_4 \in \mbb{F}_p$.
\item The denominator expression of $z_4$ vanishes.
\end{enumerate}
In both cases we derive $b_i=b_j$ for some $1\leq i\neq j\leq 3$
(by using linear independence of the basis $\{1,\sqrt{-3}\}$ of
$\mbb{F}_p[\sqrt{-3}]$ over $\mbb{F}_p$).
\end{proof}

Suppose $q=p>7$, where $p \equiv 1\ mod\ 3$. Then consider $b_1 \neq
b_2 \neq b_3 \neq b_1$ such that $b_1,b_2,b_3 \in \mbb{F}^*_p$. This
give rise to a quadratic equation for $b_4$(Refer Claim~\ref{Claim:Distinct}). In this case if the
denominator expressions for $z_4$ do not vanish, then we immediately
conclude the following.

If $z_4 = \frac{1}{b_i}$ for some $1 \leq i \leq 3$,
then $b_j=b_k$ for some $1 \leq j \neq k \leq 3$ to get a contradiction.

So again we get a
horizontal plane $z=z_4$ which is different from
$z=z_1,z=z_2,z=z_3$.
\begin{claim}
\label{Claim:Distinct}
There exists a choice of $b_1,b_2,b_3 \in \mbb{F}_p^*$ such that
$b_1 \neq b_2 \neq b_3 \neq b_1$ and $b_i^2 \not\equiv b_jb_k\ mod\
p$ for $\{i,j,k\}=\{1,2,3\}$ and $(b_1,b_2,b_3)$ does not satisfy
the following equations
\equa{b_3(1 \pm \sqrt{-3})+b_2(1 \mp \sqrt{-3})&=2b_1\\
b_1(1 \pm \sqrt{-3})+b_3(1 \mp \sqrt{-3})&=2b_2\\
b_2(1 \pm \sqrt{-3})+b_1(1 \mp \sqrt{-3})&=2b_3.} This give rise to a
quadratic equation for $b_4$ having two distinct roots other than
$b_1,b_2,b_3$ and a choice of one of the roots for $b_4$ such that
not all denominator expressions for $z_4$ vanish.
\end{claim}
\begin{proof}[Proof of Claim]
Suppose all the denominator expressions for $z_4$ vanish then we get
\equ{b_3\bigg(\frac{b_1-b_4}{b_3-b_1}\bigg)=b_2\bigg(\frac{b_3-b_4}{b_2-b_3}\bigg)=b_1\bigg(\frac{b_2-b_4}{b_1-b_2}\bigg).}
Solving for $b_4$ we get
\equ{b_4=\frac{\bigg(\frac{b_3b_1}{b_3-b_1}-\frac{b_2b_3}{b_2-b_3}\bigg)}{\bigg(\frac{b_3}{b_3-b_1}-\frac{b_2}{b_2-b_3}\bigg)}
=\frac{\bigg(\frac{b_2b_3}{b_2-b_3}-\frac{b_1b_2}{b_1-b_2}\bigg)}{\bigg(\frac{b_1}{b_1-b_2}-\frac{b_2}{b_2-b_3}\bigg)}.}
The denominators above do not vanish because $b_i^2 \not\equiv
b_jb_k\ mod\ p$ for $\{i,j,k\}=\{1,2,3\}$. Since we have two
possible values for $b_4$ as the discriminant of the quadratic is
non-zero because of distinctness of $b_i, i=1,2,3$, we can choose
the other value for $b_4$ and hence not all the denominators for
$z_4$ vanish.

Since the union of the zero sets of the below equations in the variables $b_1,b_2,b_3$ is not the whole
of $\mbb{F}_p^3$, the distinct choices of $b_1,b_2,b_3$ is possible.

\equa{b_3(1 \pm \sqrt{-3})+b_2(1 \mp \sqrt{-3})&=2b_1\\
b_1(1 \pm \sqrt{-3})+b_3(1 \mp \sqrt{-3})&=2b_2\\
b_2(1 \pm \sqrt{-3})+b_1(1 \mp \sqrt{-3})&=2b_3.} These equations
give rise to two planes passing through the origin in the three
dimensional space consisting $(b_1,b_2,b_3)$.

The choice of $b_1,b_2,b_3$ is such that the point $(b_1,b_2,b_3)$ is not in any of the two planes and 
$b_i^2 \not\equiv b_jb_k$ for all $\{i,j,k\}=\{1,2,3\}$. Such a choice can be made
if $\mbb{F}_p^{*}$ has more than $4$ non-squares which it has because $q=p>7$ as follows.

Choose $b_1,b_2$ to be two distinct non-zero squares and $b_3$ to be a non-square hence distinct and 
if neccessary has to avoid four values
\begin{itemize}
\item The two square roots of $b_1b_2$.
\item The two solutions for $b_3$ given by the two plane equations.
\end{itemize}

Since $z_4 \neq \frac{1}{b_i},b_4 \neq b_i$ for any $i=1,2,3$ and $1-b_4x_4 \neq 0$, the
line $L_4$ is a non-commuting line.
\end{proof}

An example for $q=p=7$ is given as follows. Take
$b_1=1,b_2=2,b_3=3$. We get a solution for $b_4$ as $b_4=5$, $z_4=6$
and $x_4=0$ so the non-commuting
lines are given by \equa{&L_1: x=5+t,y=1+t,z=5\\
&L_2: x=1+t,y=1+2t,z=1\\
&L_3: x=4+t,y=1+3t,z=4\\
&L_4: x=t,y=1+5t,z=6.}

By substituting values for the variables we can construct
non-commuting sets which are almost $4$ lines lying in different
horizontal planes and the size of the this non-commuting set is
almost $4q$ except possibly at most $12$ points over various fields
$\mbb{F}_q$ whenever $-3$ is a square.

Hence the Lemma~\ref{lemma:NonCommuting4q} follows.
\end{proof}

\section{Bounds for the sizes of the non-commuting sets for $UU_4(\mbb{F}_q)$}
\label{sec:BoundsNCS}
Now, we are ready to give lower and upper bound for $\gom(UU_4(\mbb{F}_q))$. Summing up, we have the following lemma.
\begin{lemma}
\label{Lemma:LowerBoundLemmaG4} Let $G=UU_4(\mbb{F}_q)$.
\begin{itemize}
\item Then there exists a constant $K$ independent of $q$ such that for
large $q=p^n$, where $p$ is an odd prime, $q^3+4q+1-K \leq \gom(G) \leq
q^3+q^2+1$.
\item Suppose $-3$ is a square in $\mbb{F}_q$ with $char(\mbb{F}_q) \neq 3$. Then there exists a constant $K$ independent of $q$ such that for
large $q=p^n$, where $p$ is an odd prime, $q^3+5q+1-K \leq \gom(G) \leq
q^3+q^2+1$.
\end{itemize}
\begin{proof}
This Lemma~\ref{Lemma:LowerBoundLemmaG4} follows from the lower
bounds obtained for the non-commuting structure $\mcl{Q}$ and restricting the
non-commuting sets to the non-commuting structure $\mcl{M}$ in
Lemmas~\ref{Lemma:3qLowerBoundLemma} and ~\ref{lemma:NonCommuting4q}
and using the formula for the size of the non-commuting set of
$UU_4(\mbb{F}_q)$ in Theorem~\ref{theorem:NonCommutingSizeUU4}.
\end{proof}
\end{lemma}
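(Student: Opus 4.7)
The plan is to leverage the identity
\equ{\gom(UU_4(\mbb{F}_q)) = q^3 + q + 1 + \gom(\mcl{M})}
from Theorem~\ref{theorem:NonCommutingSizeUU4} and feed bounds on $\gom(\mcl{M})$ into it. The upper bound $q^3+q^2+1$ is immediate from Theorem~\ref{theorem:NonCommutingsizeBound}(2), which gives $\gom(\mcl{M}) \leq q(q-1)$; substituting yields $q^3 + q + 1 + q^2 - q = q^3 + q^2 + 1$, valid in both cases.

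For the lower bounds, the key observation is that $\mcl{M}$ sits inside $\mcl{Q}$ with the identical commuting relation, differing only by the requirement $y \neq 0$. Consequently, any non-commuting set $S \subs \mcl{Q}$ which is a union of a bounded number of lines restricts to a non-commuting set $S \cap \mcl{M}$ in $\mcl{M}$, losing at most one point per line (the unique point where $y = 0$, provided the line is not contained in the plane $y=0$). This lets me transfer lower bounds on $\gom(\mcl{Q})$ to lower bounds on $\gom(\mcl{M})$ at a cost of an additive $O(1)$.

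For the first bound, I would apply Lemma~\ref{Lemma:3qLowerBoundLemma} (using $\mrm{char}(\mbb{F}_q) \neq 2$) to obtain a non-commuting set in $\mcl{Q}$ of size at least $3q - O(1)$ built from three lines. Intersecting with $\mcl{M}$ loses only a bounded number of further points, so $\gom(\mcl{M}) \geq 3q - K_1$ for some absolute constant $K_1$ independent of $q$. Substituting into Theorem~\ref{theorem:NonCommutingSizeUU4} gives $\gom(G) \geq q^3 + 4q + 1 - K$ for an appropriate $K$. Under the additional hypothesis that $-3$ is a square in $\mbb{F}_q$ and $\mrm{char}(\mbb{F}_q) \neq 3$, I would instead apply Lemma~\ref{lemma:NonCommuting4q}, which exhibits a non-commuting set of size at least $4q - 12$ in $\mcl{Q}$ as a union of four horizontal lines. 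Restricting to $\mcl{M}$ again loses at most four further points, so $\gom(\mcl{M}) \geq 4q - K_2$, and Theorem~\ref{theorem:NonCommutingSizeUU4} delivers the improved lower bound $\gom(G) \geq q^3 + 5q + 1 - K$.

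The only technical point is ensuring that $K$ can be chosen independently of $q$, which is transparent: both the number of excluded points in the original $\mcl{Q}$-constructions (already bounded by an absolute constant in the two lemmas) and the number of further points discarded upon restriction to $\mcl{M}$ (at most one per line) are $O(1)$. The genuine difficulty lies upstream, in the geometric constructions of Lemmas~\ref{Lemma:3qLowerBoundLemma} and~\ref{lemma:NonCommuting4q} themselves, which have already been handled, so the proof at this point is essentially bookkeeping.
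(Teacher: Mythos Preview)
Your proposal is correct and follows essentially the same approach as the paper: invoke Theorem~\ref{theorem:NonCommutingSizeUU4}, use Theorem~\ref{theorem:NonCommutingsizeBound}(2) for the upper bound, and obtain the lower bounds by restricting the line configurations from Lemmas~\ref{Lemma:3qLowerBoundLemma} and~\ref{lemma:NonCommuting4q} in $\mcl{Q}$ down to $\mcl{M}$ at bounded cost. You actually spell out the restriction step (losing at most one point per line where $y=0$) more carefully than the paper does.
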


\begin{remark}
\label{remark:affineset}
There are total $q^4$ non-commuting lines and $(q+1)q^2$
commuting lines in the non-commuting structure $\mcl{Q}$. Let $m>0$. In the presence of above
discussion, it is natural to ask that ``Is it
possible to describe a non-commuting set which contains almost $m$-distinct lines as
an algebraic set". Here, we want an algebraic set in terms of equations. In fact,  we seek solutions to
these equations in the algebraic closure $\overline{\mbb{F}_p}$ of
$\mbb{F}_p$ and  then descend down to finite algebraic extension to
produce a non-commuting set of size $\approx mq$ for some large $q$.
\end{remark}

\section{Description of non-commuting sets which contains almost $m-$lines as an algebraic set}
\label{sec:DNCSTAAS}

We have observed that the non-commuting conditions of a geometrical set which is a union of almost $m-$ 
distinct lines can be expressed in terms of equations and inequations. The following are the three conditions
for the lines.
\begin{itemize}
\item The distinctness of $m-$lines condition.
\item The non-commuting condition for the lines (An inequation).
\item The factorizablity conditions given by the non-commutativity coniditions between pairs of lines.
\end{itemize}

In this section, we show that the above three conditions give rise to an affine set in an affine space of 
suitable dimension and a quasi-affine set with a linear group action in another affine space of suitable 
dimension. Moreover the equations which give rise to the affine set/quasi-affine set are such that we can 
seek solutions to them over any finite field. Hence we can consider the ratio of the number of 
points in the algebraic set over the finite field $\mbb{F}_q$ (in the variable $q$ which represents the 
cardinality of the field) with respect to an appropriate power of $q$ giving rise to the terminology of 
almost $m-$lines which means union of $m-$distinct lines except a bounded $o(1)-$set (asymptotic in $q$ 
and not in $m$).

Now we explain the geometric structure of non-commuting sets which contains almost $m$-lines.
Indeed, in the following lemma we show that the collection of the non-commuting sets which is the union of 
almost $m$-lines forms an algebraic set.

\begin{lemma}[Affine algebraic set of non-commuting sets which is the union of almost $m$-lines]
Let $m > 0$ be a positive integer and let $p$ be an odd prime. There
exists an affine algebraic set $V_m[\overline{\mbb{F}_p}]$ defined
by some equations in a finite set of variables, which corresponds to
non-commuting sets containing almost $m$-lines.
\end{lemma}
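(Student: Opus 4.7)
The plan is to parameterize $m$-tuples of lines in $\mcl{Q}$ as points of a large affine space and recognize the factorizability conditions of Lemma~\ref{lemma:NonCommutativityLineLemma} as polynomial equations cutting out a closed subset.

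First I would assign to each line $L_i$, $0 \leq i \leq m-1$, the six coordinates $(x_i, y_i, z_i, a_i, b_i, c_i)$ encoding its basepoint and direction vector. The space of ordered $m$-tuples of parameterized lines thus embeds in $\mbb{A}^{6m}[\overline{\mbb{F}_p}]$. Next, for each pair $0 \leq i < j \leq m-1$, Lemma~\ref{lemma:NonCommutativityLineLemma} provides a polynomial identity
\equa{
\bigg(\det \mattwo{a_i}{b_i}{x_j}{y_j}-c_i\bigg)&\bigg(\det \mattwo{a_j}{b_j}{x_i}{y_i}-c_j\bigg)\\
&=\bigg((z_i-z_j)-\det \mattwo{x_i}{y_i}{x_j}{y_j}\bigg)\det \mattwo{a_i}{b_i}{a_j}{b_j}
}
in these $6m$ coordinates, of total degree at most $4$. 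Define $V_m[\overline{\mbb{F}_p}]$ to be the common zero locus of these $\binom{m}{2}$ polynomials in $\mbb{A}^{6m}[\overline{\mbb{F}_p}]$. By construction this is an affine algebraic set.

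To justify the correspondence with ``non-commuting sets containing almost $m$-lines'', I would appeal directly to Lemma~\ref{lemma:NonCommutativityLineLemma}. Any point of $V_m[\overline{\mbb{F}_p}]$ arising from a $\mbb{F}_q$-rational tuple which additionally satisfies the open conditions $c_i\neq\det\mattwo{a_i}{b_i}{x_i}{y_i}$ (each line individually non-commuting) and the distinctness of the $m$ lines, gives rise via $t\mapsto(x_i+a_it,y_i+b_it,z_i+c_it)$ to $m$ distinct non-commuting lines in $\mcl{Q}$ whose union is non-commuting except for a bounded exceptional subset of size $O(m^2)$ independent of $q$. Conversely any such configuration pulls back to a point of $V_m[\overline{\mbb{F}_p}]$, because whenever the bilinear pairwise commuting equation fails to factor over $\overline{\mbb{F}_p}$ into two linear factors, too many pairs of points on $L_i$ and $L_j$ commute and the union cannot be non-commuting up to a bounded set.

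The main technical obstacle will be carefully handling the meaning of ``almost''. One needs to verify that the $\binom{m}{2}$ factorizability identities are precisely the conditions ensuring that, on each pair $L_i\cup L_j$, the pairs of commuting points are confined to a set of size $O(1)$ rather than $\Theta(q)$. Here Lemma~\ref{lemma:NonCommutativityLineLemma} and its supporting claim about the number of solutions of $\alpha+\beta x+\gamma y+\delta xy=0$ do the heavy lifting, so the argument reduces essentially to bookkeeping. The open conditions (non-commuting lines and distinctness of lines) do not enter $V_m[\overline{\mbb{F}_p}]$ itself but carve out a quasi-affine subset inside it; the author's remark indicates that in the next stage this quasi-affine structure, together with the natural $GL_2(\overline{\mbb{F}_p})$ action on $\mcl{Q}$, is what gets exploited.
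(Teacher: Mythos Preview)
Your construction captures the factorizability equations correctly, but it diverges from the paper in how the two remaining conditions --- that each $L_i$ is individually non-commuting, and that the $m$ lines are pairwise distinct --- are handled. You leave these as open conditions to be imposed later, so your $V_m\subset\mbb{A}^{6m}$ contains many degenerate points (coincident lines, commuting lines, zero direction vectors) that do not correspond to any non-commuting configuration. The paper instead encodes all three conditions as \emph{equations} in a larger affine space: it introduces an auxiliary variable $U$ and replaces the inequation $\prod_i\big(c_i-\det\mattwo{a_i}{b_i}{x_i}{y_i}\big)\neq 0$ by the Rabinowitsch-type equation $\prod_i\big(c_i-\det\mattwo{a_i}{b_i}{x_i}{y_i}\big)U=1$; and for distinctness it introduces variables $u_{ij},v_{ij},w_{ij},t_{ij}$ and writes down, for each pair $i<j$, a single equation forcing at least one $3\times 3$ minor of the relevant $3\times 4$ matrix to be invertible. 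With these extra variables the resulting $V_m$ is still affine (cut out purely by equations) but now every point genuinely represents an admissible $m$-tuple of distinct non-commuting lines satisfying the factorizability conditions. Your quasi-affine viewpoint is exactly what the paper adopts in the \emph{next} lemma (the set $O_m$ with the $GL_2$ action), so you have essentially merged two lemmas into one and lost the purely affine description that the present statement asks for.

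A minor remark: the paper bounds the excluded set by $m(m-1)+m\binom{m}{2}\approx O(m^3)$, accounting both for the solutions $t_i$ arising from the linear factors and for the over-count at intersection points; your $O(m^2)$ is in fact a sharper (and still valid) estimate, but you should say explicitly where the two contributions come from.
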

\begin{proof}
In the view of the proof of Lemma \ref{lemma:NonCommutativityLineLemma}, we describe a non-commuting set of size $mq-K$,
where $K$ is independent of $q$. First we consider a set containing $m$-distinct non-commuting lines over finite field
and then we exclude a finite set $P$ whose cardinality is independent of $q$ and hence obtain a set whose cardinality turns out be $mq-K$ for a certain $K$
where $K$ is bounded by $m(m-1)+m\binom{m}{2} \approx O(m^3)$ and hence bounded and independent of $q$.
The set $P$ contains the following points.
\begin{itemize}
\item The points corresponding to the solutions $t_i$ in the factorizing conditions which are at most $m(m-1)$ number of points.
(see Claim \ref{claim:factor}, Lemma \ref{lemma:NonCommutativityLineLemma}).
\end{itemize}
We also take care of the following over-count from $mq$ when counting the cardinality of union of $m$-lines. This is done as follows.
\begin{itemize}
\item There are at most $\binom{m}{2}$ number of points appearing as intersection points of lines each of which are over-counted at most $m$-times.
\end{itemize}
So, the number $K$ is independent of $q$ which needs to be subtracted from $mq$ accounts for the above over count and
also for the exclusion of points coming out of factorizability condition.

The conditions arising from the non-commuting set of size $mq-K$, where $K \ll O(m^3)$ is a positive
integer independent of $q$ are given by
\equ{\bigg(det \mattwo {a_i}{b_i}{x_j}{y_j}-c_i\bigg)\bigg(det
\mattwo {a_j}{b_j}{x_i}{y_i}-c_j\bigg)-\bigg((z_i-z_j) - det \mattwo
{x_i}{y_i}{x_j}{y_j}\bigg) det \mattwo {a_i}{b_i}{a_j}{b_j}=0,}
where $0 \leq i < j \leq (m-1)$.
It is obvious that just the above $\binom{m}{2}$ LHS expressions in
the variables $a_i,b_i,x_i,y_i,c_i,z_i$, $i = 0,1,\ldots,m-1,$ do not
generate a unit ideal.
By introducing a new variable $U$ we can rewrite the inequation corresponding to the non-commutativity of the lines
condition as \equ{\us{i=0}{\os{m-1}{\prod}} \bigg(c_i-det
\mattwo {a_i}{b_i}{x_i}{y_i}\bigg)U=1.}

We have another open condition which is the distinctness of lines
$L_0,L_1,\ldots,L_{m-1}$.

In order for the line $L_i,L_j$ to be distinct we need to have that
the following matrix has rank at least $3$.
 \equ{
\matfour
{x_i}{y_i}{z_i}{1}{a_i+x_i}{b_i+y_i}{c_i+z_i}{1}{x_j}{y_j}{z_j}{1}{a_j+x_j}{b_j+y_j}{c_j+z_j}{1}}
However to seek a possibility of a
non-commuting set consisting almost $m$-distinct lines for $m < q$ actually,
it is enough that the following $3 \times 4$ matrix has full rank
$3$ for any $0 \leq i < j \leq (m-1)$.

\equ{\matthreefour
{x_i}{y_i}{z_i}1{a_i+x_i}{b_i+y_i}{c_i+z_i}1{x_j}{y_j}{z_j}1}

This ensures first of all that $(a_i,b_i,c_i) \neq 0$ and given
lines $L_i$ and $L_j$ for $i<j$ we get that the point
$(x_j,y_j,z_j)$ does not lie on the line joining $(x_i,y_i,z_i)$ and
$(a_i+x_i,b_i+y_i,c_i+z_i)$. Hence the lines are all distinct.
Conversely given such a non-commuting set with $m<q$, there exists,
for $0 \leq i \leq m-1$, a choice of $(x_i,y_i,z_i) \in \mbb{F}_q^3$
and a choice of $(a_i,b_i,c_i) \in \mbb{F}_q^3\bs\{(0,0,0)\}$ such
that each of the above matrices have full rank $3$. This choice of finding an
$(x_i,y_i,z_i) \in L_i \bs \us{j=1,j\neq i}{\os{m-1}{\cup}} L_j$ is possible because $m<q$ and we have
distinctness of $L_i:0,1,2,\ldots,(m-1)$.

Equivalently for $0 \leq i < j \leq (m-1)$ consider the variables
$u_{ij},v_{ij},w_{ij},t_{ij}$ and the following equations must be
satisfied. \equan{Eq:AffineDescription}{&(det \matthree
{x_i}{y_i}{z_i}{a_i+x_i}{b_i+y_i}{c_i+z_i}{x_j}{y_j}{z_j}u_{ij}-1)(det
\matthree
{x_i}{y_i}{1}{a_i+x_i}{b_i+y_i}{1}{x_j}{y_j}{1}v_{ij}-1)\\
&(det \matthree
{x_i}{z_i}1{a_i+x_i}{c_i+z_i}1{x_j}{z_j}1w_{ij}-1)(det \matthree
{y_i}{z_i}1{b_i+y_i}{c_i+z_i}1{y_j}{z_j}1t_{ij}-1)=0}

So the affine algebraic set $V_m[\overline{\mbb{F}_p}]$ is given by
these three sets of equations corresponding to
\begin{itemize}
\item Factorizing condition/ configuration of union of $m$ lines.
\item Non-commuting line condition.
\item Distinct lines condition.
\end{itemize}
in the variables
\equ{x_i,y_i,z_i,a_i,b_i,c_i,U,u_{ij},v_{ij},w_{ij},t_{ij}.}
This completes the proof of the Lemma.
\end{proof}

\begin{remark}
Two different points in this affine set may represent
the same non-commuting set which is a union of $m$-lines of almost
type.
\end{remark}

Now, consider the action of $GL_2(\overline{\mbb{F}_p})$ on
\equ{\overline{\mbb{F}_p}^3\bs\{(0,0,0)\} \times
\overline{\mbb{F}_p}^3 = \{(a,b,c,x,y,z) \mid a,b,c,x,y,z \in
\overline{\mbb{F}_p}\}} as follows. Let $A \in
GL_2(\overline{\mbb{F}_p})$, then
\equ{A.(a,b,c,x,y,z)=((A(a,b)^t)^t,det(A)c,(A(x,y)^t)^t,det(A)z)}
In the next lemma, we give a description of such non-commuting sets as a quasi affine algebraic set
which is $GL_2(\overline{\mbb{F}_p})$ invariant.
\begin{lemma}
\label{lemma:QAAS}
There exists a quasi affine algebraic set
$O_m[\overline{\mbb{F}_p}]$ corresponding to the non-commuting sets
of union of $m$-lines of almost type on which the above action gives
rise to an action of $GL_2(\overline{\mbb{F}_p})$ on
$O_m[\overline{\mbb{F}_p}]$.
\end{lemma}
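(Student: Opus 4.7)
The plan is to realize $O_m[\overline{\mbb{F}_p}]$ as a locally closed subset of the $6m$-dimensional affine space $\mbb{A}^{6m}$ over $\overline{\mbb{F}_p}$ with coordinates
\equ{(a_i,b_i,c_i,x_i,y_i,z_i),\quad i=0,1,\ldots,m-1,}
rather than using the auxiliary variables $U, u_{ij}, v_{ij}, w_{ij}, t_{ij}$ introduced for the closed affine model $V_m[\overline{\mbb{F}_p}]$. The factorizability conditions from Lemma~\ref{lemma:NonCommutativityLineLemma}, namely
\equ{\bigl(\Det \mattwo {a_i}{b_i}{x_j}{y_j}-c_i\bigr)\bigl(\Det \mattwo {a_j}{b_j}{x_i}{y_i}-c_j\bigr)=\bigl((z_i-z_j)-\Det \mattwo {x_i}{y_i}{x_j}{y_j}\bigr)\Det \mattwo {a_i}{b_i}{a_j}{b_j}}
for $0\le i<j\le m-1$, cut out a Zariski closed subset $Z\subs\mbb{A}^{6m}$. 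The non-commuting line inequations $c_i-\Det\mattwo {a_i}{b_i}{x_i}{y_i}\neq 0$ together with the rank-$3$ distinctness conditions (the non-vanishing of at least one $3\times 3$ minor of the $3\times 4$ matrix displayed in the preceding lemma, for each pair $i<j$) define a Zariski open subset $W\subs\mbb{A}^{6m}$. I would then set $O_m[\overline{\mbb{F}_p}]:=Z\cap W$, a quasi-affine algebraic set by construction.

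Next, I would promote the $GL_2(\overline{\mbb{F}_p})$-action on $\overline{\mbb{F}_p}^3\bs\{0\}\times\overline{\mbb{F}_p}^3$ to the diagonal action on $\mbb{A}^{6m}$, acting identically on each $6$-tuple $(a_i,b_i,c_i,x_i,y_i,z_i)$. The central computation is that for any $A\in GL_2(\overline{\mbb{F}_p})$,
\equ{\Det\mattwo{(A(a_i,b_i)^t)^t_1}{(A(a_i,b_i)^t)^t_2}{(A(x_j,y_j)^t)^t_1}{(A(x_j,y_j)^t)^t_2}=\Det(A)\Det\mattwo{a_i}{b_i}{x_j}{y_j},}
and similarly for all $2\times 2$ determinants arising in the factorizability and non-commuting expressions; together with the scaling $c_i\mapsto\Det(A)c_i$ and $z_i\mapsto\Det(A)z_i$, each defining expression transforms to a $\Det(A)^k$-multiple of itself for some $k\ge 1$. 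Hence the equations defining $Z$ pull back to scalar multiples of themselves, and the open inequations defining $W$ are likewise preserved. Distinctness of lines is preserved because $A$ acts as an invertible transformation on each line (viewed in $\overline{\mbb{F}_p}^3$ by the twisted $(x,y,z)\mapsto((A(x,y)^t)^t,\Det(A)z)$ action), which is a bijection and so sends distinct lines to distinct lines. Therefore the action stabilizes $O_m[\overline{\mbb{F}_p}]$.

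The step I expect to require the most care is the verification that every determinantal expression appearing in the defining relations genuinely transforms by a power of $\Det(A)$: although the case-by-case check is routine, it needs to be carried out for the factorizability equation (which mixes $(a_i,b_i)$, $(x_j,y_j)$, $(a_j,b_j)$, $(x_i,y_i)$ and a scalar difference $z_i-z_j$ in a particular pattern) and simultaneously for the distinctness minors of the $3\times 4$ matrix; a mismatch in the exponent of $\Det(A)$ between the two sides of an equation would break invariance. Once the homogeneity in $\Det(A)$ is tabulated consistently on both sides of every relation, the lemma follows, and the resulting $O_m[\overline{\mbb{F}_p}]$ is a $GL_2(\overline{\mbb{F}_p})$-invariant quasi-affine algebraic set whose $\mbb{F}_q$-points parametrize (with bounded multiplicity, by the remark after the previous lemma) non-commuting sets in $\mcl{Q}$ which are unions of $m$ distinct lines outside a bounded $o(1)$-set.
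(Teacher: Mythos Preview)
Your proposal is correct and follows essentially the same construction as the paper: define $O_m[\overline{\mbb{F}_p}]$ inside $\overline{\mbb{F}_p}^{6m}$ by imposing the closed factorizability conditions together with the open non-commuting line and distinctness conditions, and then observe that the diagonal $GL_2$-action preserves these. The only minor differences are that the paper phrases the distinctness condition via the rank of the $4\times 4$ matrix rather than the $3\times 4$ one, and that the paper simply asserts the $GL_2$-invariance whereas you supply the explicit determinant-scaling verification.
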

\begin{proof}
Consider the subspace $O_m[\overline{\mbb{F}_p}] \subs
\big(\overline{\mbb{F}_p}^3\big)^{*m} \times
\overline{\mbb{F}_p}^{3m} \subs \overline{\mbb{F}_p}^{6m}$
consisting of the vectors $(a_i,b_i,c_i,x_i,y_i,z_i)$ which
satisfies the non-commutativity condition, rank $3$ condition for
the $4 \times 4$ matrix (both these conditions are open leading to
quasi-affineness) and factorisability condition (a closed
condition). On this space $O_m[\overline{\mbb{F}_p}]$, the group
$GL_2(\overline{\mbb{F}_p})$ acts. Again here non-commuting sets are
multi-represented.
\end{proof}

\section{Further questions}
\label{sec:FQ}
In the final section of the paper we raise some interesting questions
based on the previous sections.
\begin{ques}
Is it possible to show that, given any $m>0$ a positive integer, there exist a non-empty
algebraic set/quasi-algebraic set corresponding to collection of
non-commuting sets consisting almost $m$-distinct lines over
$\mbb{F}_q$ for large $q$?
\end{ques}
\begin{remark}
If the answer is in affirmative, then we can improve the lower bound
for the size of the non-commuting set in
the non-commuting structure $\mcl{Q}$ and hence also in $UU_4(\mbb{F}_q)$. For
the non-commuting structure $\mcl{Q}$ the lower bound can be any degree one polynomial
in $q$, for large $q$, if this phenomenon is true for any positive integer $m$.
If there are congruence conditions on the prime $p$, where $q=p^n$ for some $n$ then 
the lower bound is applicable with the congruence conditions.
\end{remark}

\begin{ques}
Is $\gom(UU_4(\mbb{F}_q))$ a polynomial in $q$ like
$\gom(GL_3(\mbb{F}_q))$ for large $q$?
\end{ques}
We observe that
$\gom(UU_4(\mbb{F}_q))$ is a polynomial in $q$ if and only if
$\gom(\mcl{M})$ is a polynomial in $q$. 
We have the following open question as well.
\begin{ques}
Does there exist a maximal non-commuting set in $\mcl{M} \subs \mbb{F}_q^3$ which tend to have 
a ``geometric structure" just like the union of almost $m-$distinct lines?
\end{ques}
In the case of higher dimensional upper triangular unipotent matrix groups over finite
fields, because of the existence more non-trivial non-commuting
geometric structures than in $UU_4(\mbb{F}_q)$ what can be
said about the polynomial nature of $\gom(UU_n(\mbb{F}_q))$ for
large $q$?

We can ask the following relevant questions about the polynomial
nature of $\gom(UU_n(\mbb{F}_q))$.

\begin{enumerate}
\item Is it the case $\gom(UU_n(\mbb{F}_q))$ a single polynomial for 
large $q$ in higher dimensions for $n \geq 4$?
\item Is it the case that we can determine only the highest order
of $q$ in $\gom(UU_n(\mbb{F}_q))$ by $\gom(S_0)$, where
\equ{S_0= \{\uun a, \us{i=1}{\os{n}{\prod}} a_{i,i+1}\neq 0\}?}
However lower orders in $q$ arising out of non-commuting sets in other structures can only suffice to
give bounds for large $q$ but not being exactly the lower part of any polynomial which could possibly describe $\gom(UU_n(\mbb{F}_q))$ as a polynomial.
This would be an interesting phenomenon in the case of $UU_4(\mbb{F}_q)$ itself because of the non-commuting structure $\mcl{Q}$ or $\mcl{M}$.
\item Is it the case that $\gom(UU_n(\mbb{F}_p))$ follows Higmann
Porc Polynomial Phenomenon for large primes $p$ depending on congruence
classes mod $N$ for some positive integer $N>0$? The Lemma~\ref{lemma:NonCommuting4q} suggests this
particular question.
\end{enumerate}
\section{Appendix}
In this section we look at $UU_n(\mbb{F}_q)$ for any $n$ and
classify certain abelian centralizers of elements in
$UU_n(\mbb{F}_q)$. Indeed we prove the following theorem for
any field $\mbb{K}$(Refer Remark~\ref{remark:AbelianCentralizer}).

\begin{theorem}
\label{theorem:AbelianCentralizer} Let $G=UU_n(\mbb{K})$, where $\mbb{K}$ is any field. Let $x \in
G$ be a unipotent upper triangular $n \times n$ matrix such that
the product of the super-diagonal entries is non-zero. Then $C_G(x)$
is abelian.
\end{theorem}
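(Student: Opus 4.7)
The plan is to set $N := x - I \in M_n(\mbb{K})$ and reduce the statement to the classical fact that the centralizer of a regular nilpotent matrix in the full matrix algebra coincides with the commutative polynomial algebra it generates.

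\textbf{Step 1 (regularity of $N$).} Since $x \in UU_n(\mbb{K})$, the matrix $N = x - I$ is strictly upper-triangular with super-diagonal $N_{i,i+1} = x_{i,i+1}$. Deleting the last row and the first column of $N$ produces an $(n-1)\times(n-1)$ upper-triangular matrix with diagonal entries $x_{1,2}, x_{2,3}, \ldots, x_{n-1,n}$, whose determinant $\prod_{i=1}^{n-1} x_{i,i+1}$ is non-zero by hypothesis. Hence $\mrm{rank}(N) = n-1$; together with nilpotency this forces the Jordan type of $N$ to be a single block of size $n$, i.e.\ $N$ is \emph{regular nilpotent}.

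\textbf{Step 2 (reduction to $\mbb{K}[N]$).} For $y \in G$, $yx = xy$ iff $yN = Ny$, so
\equ{C_G(x) \;=\; G \cap C_{M_n(\mbb{K})}(N).}
I will argue that $C_{M_n(\mbb{K})}(N) = \mbb{K}[N]$; since $\mbb{K}[N]$ is commutative, this immediately yields that $C_G(x)$ is abelian.

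\textbf{Step 3 (identification $C_{M_n(\mbb{K})}(N) = \mbb{K}[N]$).} First, I would show that $I, N, N^2, \ldots, N^{n-1}$ are linearly independent over $\mbb{K}$ by computing the $(1,k+1)$-entry of $N^k$ as the unique path product
\equ{(N^k)_{1,k+1} \;=\; x_{1,2}\,x_{2,3}\cdots x_{k,k+1} \;\neq\; 0,}
while $(N^k)_{1,j+1} = 0$ for $0 \leq j < k$ because $N^k$ vanishes below its $k$-th super-diagonal. Thus the $(n-1)\times(n-1)$ matrix of first-row entries of $N, N^2, \ldots, N^{n-1}$ is upper-triangular with non-zero diagonal, so $\dim_\mbb{K} \mbb{K}[N] = n$. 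Second, $\dim C_{M_n(\mbb{K})}(N) = n$ is the standard centralizer-dimension formula for a regular nilpotent matrix, obtainable by scalar extension to $\overline{\mbb{K}}$ (where $N$ is conjugate to the standard Jordan block $J_n$, whose commutant is visibly the $n$-dimensional algebra of upper-triangular Toeplitz matrices) and using invariance of $\dim$ under base change. Combined with the obvious inclusion $\mbb{K}[N] \subseteq C_{M_n(\mbb{K})}(N)$, these matching dimensions force equality.

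The main obstacle is Step 3, specifically establishing $\dim C_{M_n(\mbb{K})}(N) = n$ over an arbitrary field $\mbb{K}$. The scalar-extension route above is the cleanest. A purely over-$\mbb{K}$ alternative, closer in spirit to the rest of the paper, is to apply Lemma~\ref{lemma:Multiplcation} to $N$ and argue by induction on the super-diagonal index that any matrix commuting with $N$ is uniquely determined by its first row; this yields the required dimension bound directly and matches it with the $n$ parameters $c_0, \ldots, c_{n-1}$ coming from $\sum_{k=0}^{n-1} c_k N^k \in \mbb{K}[N]$.
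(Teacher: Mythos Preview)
Your proof is correct and takes a genuinely different route from the paper's. The paper first conjugates $x$ inside $G$ to the matrix $\tilde{x}$ retaining only the super-diagonal entries, then derives an explicit closed formula for every entry $y_{ik}$ of a centralizing element in terms of its first row and the super-diagonal of $\tilde{x}$, and finally verifies $yz=zy$ by a direct term-by-term combinatorial identity $p_{i,(i+t),(i+r)}=q_{i,(i+r-t),(i+r)}$. Your argument bypasses all of this: you recognise $N=x-I$ as regular nilpotent from the rank computation, and then invoke the classical fact $C_{M_n(\mbb{K})}(N)=\mbb{K}[N]$, which immediately gives commutativity of any subset of the centralizer. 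The paper's approach is entirely self-contained and yields explicit parametrisations of $C_G(x)$ that feed into later cardinality and affine-space statements (Remark~\ref{remark:AbelianCentralizer}); your approach is shorter and more conceptual, and in fact does not even need the base-change step you mention, since a nilpotent matrix with one-dimensional kernel already has Jordan form $J_n$ over $\mbb{K}$ itself. The alternative you sketch at the end (first row determines everything via Lemma~\ref{lemma:Multiplcation}) is precisely the backbone of the paper's argument, so you have identified both routes.
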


\begin{lemma}
\label{lemma:CentralizerSize} Let $A = (a_{ij})_{n \times n} \in
UU_n(\mbb{K})$ be such that none of the super-diagonal entries in
$A$ are zero. Let $B = (b_{ij})_{n \times n} \in UU_n(\mbb{K})$
commute with $A$ then the first row entries
$b_{12},b_{13},\ldots,b_{1n}$ of $B$ determine the remaining entries
of $B$. If $\mbb{K} = \mbb{F}_q$, then the cardinality of the group
centralizer $C_{UU_n(\mbb{F}_q)}(A)$ of $A$ is $q^{n-1}$.
\end{lemma}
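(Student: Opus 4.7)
The plan is to apply the matrix multiplication lemma (Lemma~\ref{lemma:Multiplcation}), which says that $B = (b_{ij}) \in UU_n(\mbb{K})$ commutes with $A$ if and only if for every $1 \le i < j \le n$,
\[
\sum_{i < k < j} a_{ik}\, b_{kj} \;=\; \sum_{i < k < j} b_{ik}\, a_{kj}.
\]
The $n-1$ equations with $j = i+1$ have empty index set and are automatically satisfied, so the substantive constraints are the $\binom{n-1}{2}$ equations indexed by pairs $(i',j')$ with $j' - i' \ge 2$. I will show that each of these equations determines exactly one new entry of $B$ when read in the right order, the free parameters being the $n-1$ first-row off-diagonal entries $b_{12}, b_{13}, \ldots, b_{1n}$.

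The main step is a double induction on $w := j - i$ (outer) and on the row index $i$ (inner). For $w \ge 1$ and $2 \le i \le n-w$, I use the commuting equation at position $(i-1,\, i+w)$, which reads
\[
\sum_{k=i}^{i+w-1} a_{i-1,k}\, b_{k,i+w} \;=\; \sum_{k=i}^{i+w-1} b_{i-1,k}\, a_{k,i+w}.
\]
The coefficient of the target unknown $b_{i,i+w}$ on the left is $a_{i-1,i}$, which is nonzero by hypothesis. Every other $b_{k,i+w}$ on the left has $k > i$, hence diagonal offset $(i+w) - k < w$, and is therefore already known by the outer induction. The entries $b_{i-1,k}$ on the right all lie in row $i-1$; those with $k < i-1+w$ again have offset less than $w$, while the single remaining entry $b_{i-1,\, i-1+w}$ of offset $w$ is either a first-row entry (when $i = 2$) or was pinned down earlier in the same $w$-layer by the inner induction on $i$ (when $i \ge 3$). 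Hence $b_{i,i+w}$ is uniquely forced.

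As $(w, i)$ ranges over $1 \le w \le n-2$, $2 \le i \le n-w$, the pair $(i-1,\, i+w)$ runs through each $(i',j')$ with $j' - i' \ge 2$ exactly once, so each nontrivial commuting equation is consumed precisely once to fix precisely one entry. This proves simultaneously that the first row of $B$ determines $B$ and that, conversely, every choice of first row over $\mbb{F}_q$ extends uniquely to a $B$ commuting with $A$, giving $|C_{UU_n(\mbb{F}_q)}(A)| = q^{n-1}$. The only real obstacle is the bookkeeping: one must verify that the chosen schedule for $(w,i)$ never invokes an entry that has not yet been determined, which ultimately rests on the fact that every super-diagonal entry $a_{i-1,i}$ of $A$ is a unit of $\mbb{K}$ so that each equation can be solved for its intended leading unknown.
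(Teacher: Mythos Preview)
Your argument is correct and follows essentially the same approach as the paper: the paper's proof is a one-sentence summary of exactly this double induction (outer on the diagonal offset $w$, inner on the row $i$), asserting that $b_{1,w+1}$ determines $b_{i,i+w}$ for each $w$ in turn. Your version spells out the details the paper leaves implicit, in particular the bijection between the $\binom{n-1}{2}$ nontrivial commuting equations and the $\binom{n-1}{2}$ non-first-row entries, which is what guarantees that every choice of first row extends (uniquely) to an element of the centralizer and hence yields the exact count $q^{n-1}$.
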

\begin{proof}
From Lemma~\ref{lemma:Multiplcation}, it
follows that $b_{12}$ determines $b_{i,i+1}$ for $2 \leq i \leq
(n-1)$, $b_{13}$ determines $b_{i,i+2}$ for $2 \leq i \leq (n-2)$
and $b_{1,n-1}$ determines $b_{2n}$.
\end{proof}

Now we are ready to prove Theorem \ref{theorem:AbelianCentralizer}.
\begin{proof}[Proof of Theorem \ref{theorem:AbelianCentralizer}]
Let
\begin{equation*}
x = \uun x \in G
\end{equation*}
be a unipotent upper triangular $n \times n$ matrix such that
$\us{i=1}{\os{n-1}{\prod}}x_{i,i+1} \neq 0$. In the case when
$\mbb{K}=\mbb{F}_q$ a finite field, it follows from
Lemma~\ref{lemma:CentralizerSize}, the size of the conjugacy class
$Cl_G(x)$ of $x$ is $\frac{|G|}{|C_G(x)|} =
\frac{q^{\binom{n}{2}}}{q^{(n-1)}} = q^{\binom{n-1}{2}}$. We observe
the following.
\begin{itemize}
\item The super-diagonal entries do not change in any particular conjugacy class over any field $\mbb{K}$.
\item If $\mbb{K}=\mbb{F}_q$ the total number of matrices in $UU_n(\mbb{F}_q)$ with the
same super-diagonal entries as that of $x$ is $q^{\binom{n-1}{2}}$.
\end{itemize}
In the case of when $\mbb{K}=\mbb{F}_q$, by finiteness we can conclude that $x$ is conjugate to $\ti{x} \in G$,
where $\ti{x}$ has the same super-diagonal entries as that of $x$ and rest of the upper
triangular entries of $\ti{x}$ are zero.

Even otherwise, for any field $\mbb{K}$, we could actually let $u$ be a variable unipotent
upper triangular matrix such that $xu=u\ti{x}$ and solve a system of
linear equations for $u$ using the fact that
$\us{i=1}{\os{(n-1)}{\prod}}x_{i,i+1} \neq 0$.

Now it is enough to prove that $C_G(\ti{x})$ is abelian. So without loss
of generality, let us assume that in $x$ the upper triangular
entries $x_{ij} = 0$ for $1 \leq i < i+1 < j \leq n$ i.e. the
non-super-diagonal positions are all zero and
$\us{i=1}{\os{n-1}{\prod}}x_{i,i+1} \neq 0$.

Let
\begin{equation*}
y = \uun y \in C_G(x).
\end{equation*}
Then by a direct calculation
\begin{equation*}
y_{ik} =
y_{1,k-i+1}\bigg(\frac{x_{k-1,k}}{x_{i-1,i}}\bigg)\bigg(\frac{x_{k-2,k-1}}{x_{i-2,i-1}}\bigg)\cdots
\bigg(\frac{x_{k-i+2,k-i+3}}{x_{23}}\bigg)
\bigg(\frac{x_{k-i+1,k-i+2}}{x_{12}}\bigg)
\end{equation*}
for $1 < i < k \leq n$.

Let
\begin{equation*}
z = \uun z \in C_G(x).
\end{equation*}
Then by a direct calculation
\begin{equation*}
z_{ik} =
z_{1,k-i+1}\bigg(\frac{x_{k-1,k}}{x_{i-1,i}}\bigg)\bigg(\frac{x_{k-2,k-1}}{x_{i-2,i-1}}\bigg)\cdots
\bigg(\frac{x_{k-i+2,k-i+3}}{x_{23}}\bigg)
\bigg(\frac{x_{k-i+1,k-i+2}}{x_{12}}\bigg)
\end{equation*}
for $1 < i < k \leq n$.

\begin{claim}
\label{Claim:Abelian} zy=yz i.e.
\begin{equation*}
\us{i<j<k}{\sum}y_{ij}z_{jk}=\us{i<j<k}{\sum}z_{ij}y_{jk}.
\end{equation*}
\end{claim}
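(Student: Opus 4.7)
The plan is to substitute the explicit formulas for $y_{ij}$ and $z_{jk}$ displayed immediately before Claim~\ref{Claim:Abelian} into both sides of the required identity, factor out the terms that do not depend on the summation variable, and then exploit a symmetry to finish. Writing $P_j := \us{l=1}{\os{j}{\prod}} x_{l,l+1}$ for $j \geq 1$ with $P_0 := 1$, the telescoping ratio of super-diagonal entries collapses and the formula given in the text can be rewritten uniformly as
$$y_{ik} \;=\; y_{1,k-i+1} \cdot \frac{P_{k-1}}{P_{k-i}\,P_{i-1}}, \qquad z_{ik} \;=\; z_{1,k-i+1} \cdot \frac{P_{k-1}}{P_{k-i}\,P_{i-1}},$$
valid for $1 \leq i \leq k \leq n$ with the convention $y_{ii}=z_{ii}=1$.

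With these closed forms in hand I would compute $y_{ij}z_{jk}$ for $i<j<k$ by direct substitution; the factor $P_{j-1}$ appearing in the denominator of the coefficient of $y_{ij}$ cancels against the $P_{j-1}$ in the numerator of the coefficient of $z_{jk}$, yielding
$$y_{ij}\,z_{jk} \;=\; \frac{P_{k-1}}{P_{i-1}} \cdot \frac{y_{1,j-i+1}\,z_{1,k-j+1}}{P_{j-i}\,P_{k-j}},$$
and analogously $z_{ij}\,y_{jk} = \frac{P_{k-1}}{P_{i-1}} \cdot \frac{z_{1,j-i+1}\,y_{1,k-j+1}}{P_{j-i}\,P_{k-j}}$. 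The prefactor $P_{k-1}/P_{i-1}$ depends only on $i$ and $k$, not on $j$, so it can be pulled out of the sums on both sides of the claimed identity.

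The final step is then the reindexing $a := j-i$, $b := k-j$, under which the condition $i<j<k$ becomes $a,b \geq 1$ with $a+b = k-i$ fixed. The identity to prove reduces to
$$\us{\substack{a+b=k-i \\ a,b\geq 1}}{\sum} \frac{y_{1,a+1}\,z_{1,b+1}}{P_a\,P_b} \;=\; \us{\substack{a+b=k-i \\ a,b\geq 1}}{\sum} \frac{z_{1,a+1}\,y_{1,b+1}}{P_a\,P_b},$$
and the transposition $a \leftrightarrow b$ carries the right-hand side to the left-hand side term by term. I do not expect a real obstacle: the content of the proof is that the centralizer formula is ``separable,'' its $(i,k)$-dependence being a ratio of $x$-products while the first-row data enter only through the length $k-i+1$. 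The only genuine care is indexing hygiene — verifying that the boundary cases $i=1$, $j=i+1$, or $j=k-1$ are consistent with $P_0 = 1$, and confirming that the claimed cancellation of $P_{j-1}$ really does occur inside the product $y_{ij} z_{jk}$.
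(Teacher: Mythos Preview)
Your proof is correct and follows essentially the same route as the paper's: both substitute the explicit centralizer formula into $y_{ij}z_{jk}$ and then pair the $j$-term on one side with the $(i+k-j)$-term on the other. Your introduction of $P_j = \prod_{l\le j} x_{l,l+1}$ makes the $a \leftrightarrow b$ symmetry transparent, whereas the paper performs the equivalent verification by writing out the products $p_{i,i+t,i+r}$ and $q_{i,i+r-t,i+r}$ explicitly and checking that they coincide.
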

Let $j=i+t,k =i+r$ for some $t>0,r>0$.
\begin{equation*}
\begin{aligned}
y_{i,i+t}z_{i+t,i+r}&=y_{1,t+1}z_{1,r-t+1}p_{i,(i+t),(i+r)}\\
z_{i,i+t}y_{i+t,i+r}&=z_{1,t+1}y_{1,r-t+1}p_{i,(i+t),(i+r)},\\
\end{aligned}
\end{equation*}
where
\begin{equation*}
\begin{aligned}
p_{i,(i+t),(i+r)} &=
\bigg(\frac{x_{i+t-1,i+t}}{x_{i-1,i}}\bigg)\bigg(\frac{x_{i+t-2,i+t-1}}{x_{i-2,i-1}}\bigg)\cdots
\bigg(\frac{x_{t+2,t+3}}{x_{23}}\bigg)
\bigg(\frac{x_{t+1,t+2}}{x_{12}}\bigg)\\
&\bigg(\frac{x_{i+r-1,i+r}}{x_{i+t-1,i+t}}\bigg)\bigg(\frac{x_{i+r-2,i+r-1}}{x_{i+t-2,i+t-1}}\bigg)\cdots
\bigg(\frac{x_{r-t+2,r-t+3}}{x_{23}}\bigg)
\bigg(\frac{x_{r-t+1,r-t+2}}{x_{12}}\bigg)\\
&=\frac{x_{i+r-1,i+r}x_{i+r-2,i+r-1}\cdots
x_{r-t+2,r-t+3}x_{r-t+1,r-t+2}}{x_{t,t+1}x_{t-1,t}\cdots
x_{23}x_{12}}.\\
\end{aligned}
\end{equation*}

Let $j=i+r-t,k =i+r$ for some $t>0,r>0,r>t$. Now
\begin{equation*}
\begin{aligned}
y_{i,i+r-t}z_{i+r-t,i+r}&=y_{1,r-t+1}z_{1,t+1}q_{i,(i+r-t),(i+r)}\\
z_{i,i+r-t}y_{i+r-t,i+r}&=z_{1,r-t+1}y_{1,t+1}q_{i,(i+r-t),(i+r)},\\
\end{aligned}
\end{equation*}
where \small
\begin{equation*}
\begin{aligned}
q_{i,(i+r-t),(i+r)} &=
\bigg(\frac{x_{i+r-t-1,i+r-t}}{x_{i-1,i}}\bigg)\bigg(\frac{x_{i+r-t-2,i+r-t-1}}{x_{i-2,i-1}}\bigg)\cdots
\bigg(\frac{x_{r-t+2,r-t+3}}{x_{23}}\bigg)
\bigg(\frac{x_{r-t+1,r-t+2}}{x_{12}}\bigg)\\
&\bigg(\frac{x_{i+r-1,i+r}}{x_{i+r-t-1,i+r-t}}\bigg)\bigg(\frac{x_{i+r-2,i+r-1}}{x_{i+r-t-2,i+r-t-1}}\bigg)\cdots
\bigg(\frac{x_{t+2,t+3}}{x_{23}}\bigg)
\bigg(\frac{x_{t+1,t+2}}{x_{12}}\bigg)\\
&=\frac{x_{i+r-1,i+r}x_{i+r-2,i+r-1}\cdots
x_{t+2,t+3}x_{t+1,t+2}}{x_{r-t,r-t+1}x_{r-t-1,r-t}\cdots
x_{23}x_{12}}.\\
\end{aligned}
\end{equation*}
\normalsize We observe that $p_{i,(i+t),(i+r)} =
q_{i,(i+r-t),(i+r)}$ in both the cases $r-t \leq t$ and $r-t \geq
t$. Therefore we have for all $0 < t < r$

\begin{equation*}
\begin{aligned}
y_{i,i+t}z_{i+t,i+r} &= z_{i,i+r-t}y_{i+r-t,i+r}\\
z_{i,i+t}y_{i+t,i+r} &= y_{i,i+r-t}z_{i+r-t,i+r}.\\
\end{aligned}
\end{equation*}

So
\begin{equation*}
\us{i<j<k}{\sum}y_{ij}z_{jk}=\us{i<j<k}{\sum}z_{ij}y_{jk}
\end{equation*}
and $yz=zy$. Claim~\ref{Claim:Abelian} follows.

Hence the centralizer $C_G(x)$ is abelian and
Theorem~\ref{theorem:AbelianCentralizer} follows.
\end{proof}

\begin{remark}
\label{remark:AbelianCentralizer}
\begin{enumerate}
\item (Affineness of the abelian centralizer:) In Theorem~\ref{theorem:AbelianCentralizer} when we consider
the field $\mbb{K}$ as an algebraically closed (actually this conidition is not required see next
Remark~\ref{remark:AbelianCentralizer}(2)), then the space $C_G(x)$ is a closed algebraic
set isomorphic to the affine space $\mbb{A}_{\mbb{K}}^{n-1}$
under the isomorphism
\begin{equation*}
\begin{aligned}
\gf: \mbb{A}_{\mbb{K}}^{n-1} &\lra C_G(x)\\
\gf(y_{1j}:1 < j \leq n)&:= \uun y.\\
\end{aligned}
\end{equation*}
Using Lemma~\ref{lemma:CentralizerSize} the other entries $y_{ij}$
with $1 \neq i < j$ are determined in terms of $y_{1j}: 1 < j < n$
and $x_{i,i+1}: 1 < i < n$. So the map $\gf$ is a polynomial
isomorphism onto the closed subgroup $C_G(x)$.

\item Let $X$ be the set of matrices in $G$ whose super-diagonal elements
are the same as that of $x$ and the remaining entries can be any
elements from the field $\mbb{K}$. We note that the proof is general
as it goes through over any field $\mbb{K}$, need not be finite,
need not be algebraically closed and we have
the following exact sequence of affine sets (all are isomorphic to affine spaces)
\equ{0 \lra \mbb{A}_{\mbb{K}}^{n-1} \os{y \lra \gf(y)}{\xrightarrow{\hspace*{1.5cm}}} G \os{u \lra u^{-1}xu}{\xrightarrow{\hspace*{1.5cm}}} X \lra 0}
The maps are just polynomial maps. The first one is affine and becomes linear if we replace $G$ by the set
\equ{G-Identity=\{g-I\mid g \in G,I \text{ is the identity matrix.}\}} and the
map $\gf$ by $\gf-I$. The second has a linear expression for the super-diagonal and also remains
fixed as that of $x$. The sequence below is an exact sequence of linear maps.
\equ{0 \lra \mbb{A}_{\mbb{K}}^{n-1} \os{y \lra (\gf(y)-I)}{\xrightarrow{\hspace*{1.5cm}}} G-Identity
\os{u \lra (ux-xu)}{\xrightarrow{\hspace*{1.5cm}}} X-\ti{x} \lra 0,}
where $\ti{x}=(\ti{x}_{ij})_{n\times n}$, where $\ti{x}_{ij}=x_{ij},1 \leq i+1 \leq j \leq n$ and $0$ otherwise.
\end{enumerate}
\end{remark}

\section{Acknowledgements}
It is a pleasure to thank our mentor B. Sury for his support, encouragement and useful comments.
The authors are supported by an Indian Statistical Institute (ISI)
Grant as a Visiting Scientist at ISI Bangalore,
India. The second author is also supported by ERC grant and thankful to Aner Shalev for his support
to this research work.

\bibliographystyle{abbrv}
\def\cprime{$'$}

\end{document}